\documentclass[oneside,11pt]{amsart}
\usepackage{amsaddr}

\usepackage[margin=3cm]{geometry}
\usepackage[utf8]{inputenc}
\usepackage{amssymb,amsfonts}
\usepackage{lmodern}
\usepackage[english]{babel}

\usepackage{mathtools}
\usepackage{stmaryrd}

\usepackage[skip=0.5ex plus 2pt minus 1pt, indent=15pt]{parskip}

\usepackage{tikz}
\usetikzlibrary{shapes,calc,positioning, graphs,graphs.standard,quotes}

\usepackage{xcolor}

\setcounter{MaxMatrixCols}{20}

\definecolor{linkblue}{RGB}{1,1,190}
\definecolor{citegreen}{RGB}{1,190,1}
\usepackage[linkcolor=linkblue
           ,citecolor=citegreen
           ,ocgcolorlinks        
           ,bookmarksopen=True,  
           ]{hyperref}
\usepackage[ocgcolorlinks]{ocgx2} 

\makeatletter
  \AtBeginDocument{
    \hypersetup{
      pdftitle  = {\@title},
      pdfauthor = {\authors}     
    }
  }

  \pdfstringdefDisableCommands{%
  }
\makeatother
\usepackage{amsthm}
\usepackage[capitalize]{cleveref}

\usepackage{microtype}

\theoremstyle{definition}
\newtheorem {definition}{Definition}[section]

\theoremstyle{plain}
\newtheorem {theorem}[definition]{Theorem}
\crefname   {theorem}{Theorem}{Theorems}
\newtheorem*{theorem*}{Theorem}
\newtheorem {lemma}[definition]{Lemma}
\newtheorem {proposition}[definition]{Proposition}

\newtheorem {corollary}[definition]{Corollary}

\theoremstyle{remark}
\newtheorem {remark}[definition]{Remark}
\newtheorem {example}[definition]{Example}
\newtheorem*{example*}{Example}
\crefname   {example}{Example}{Examples}

\usepackage{enumitem}
\setlist[enumerate,1]{label=\textup{(\arabic*)}, ref=\textup{(}\arabic*\textup{)}, leftmargin=0.75cm}
\newlist{equivenumerate}{enumerate}{1}
\setlist[equivenumerate,1]{%
  label=\textup{(\alph*)},
  ref=\textup{(}\alph*\textup{)},
  leftmargin=0.75cm
}

\newcommand{\defit}[1]{\textsf{#1}}
\DeclareMathOperator{\pic}{Pic}
\DeclareMathOperator{\cl}{cl}
\DeclareMathOperator{\minspec}{minspec}
\DeclareMathOperator{\maxspec}{maxspec}
\DeclareMathOperator{\Spec}{Spec}
\DeclareMathOperator{\Sing}{Sing}
\DeclareMathOperator{\rank}{rank}
\DeclareMathOperator{\im}{im}
\DeclareMathOperator{\Cl}{Cl}
\newcommand{\quo}{\mathbf q}
\DeclarePairedDelimiter{\card}{\lvert}{\rvert}
\newcommand{\m}{\mathfrak m}
\newcommand{\n}{\mathfrak n}
\newcommand{\fp}{\mathfrak p}
\newcommand{\fq}{\mathfrak q}
\newcommand{\fP}{\mathfrak P}
\newcommand{\fQ}{\mathfrak Q}
\newcommand{\cO}{\mathcal O}
\newcommand{\bN}{\mathbb N}
\newcommand{\bZ}{\mathbb Z}
\newcommand{\bR}{\mathbb R}
\newcommand{\cG}{\mathcal G}
\newcommand{\agg}{\mathbf A}
\DeclareMathOperator{\supp}{supp}
\newcommand{\ind}{\mathbf 1}
\newcommand{\sL}{\mathsf L}
\newcommand{\sD}{\mathsf D}
\newcommand{\cT}{\mathcal T}
\newcommand{\cB}{\mathcal B}
\newcommand{\cC}{\mathcal C}
\newcommand{\cK}{\mathcal K}
\newcommand{\bQ}{\mathbb Q}
\newcommand{\fm}{\mathfrak m}

\newcommand{\cL}{\mathcal L}
\DeclareMathOperator{\Pic}{Pic}

\renewcommand{\vec}{\mathbf}

\title{Lattices over Bass rings and graph agglomerations}
\author{Nicholas R.~Baeth}
\address{Department of Mathematics, Franklin \& Marshall College, P.~O.~Box 3003, Lancaster PA, 17604-3003, USA}
\email{nicholas.baeth@fandm.edu}

\author{Daniel Smertnig}
\address{University of Graz, Institute for Mathematics and Scientific Computing, NAWI Graz, Heinrichstrasse 36, 8010 Graz, Austria}
\email{daniel.smertnig@uni-graz.at}

\keywords{Bass rings, direct-sum decompositions, monoids of modules, factorization theory}
\subjclass[2020]{Primary 13C05; Secondary 05C25, 05E40, 13C14, 13F05, 16D70, 20M13}

\begin{document}

\begin{abstract}
    We study direct-sum decompositions of torsion-free, finitely generated modules over a (commutative) Bass ring $R$ through the factorization theory of the corresponding monoid $T(R)$.
    Results of Levy--Wiegand and Levy--Odenthal together with a study of the local case yield an explicit description of $T(R)$.
    The monoid is typically neither factorial nor cancellative.
    Nevertheless, we construct a transfer homomorphism to a monoid of graph agglomerations---a natural class of monoids serving as combinatorial models for the factorization theory of $T(R)$.
    As a consequence, the monoid $T(R)$ is transfer Krull of finite type and several finiteness results on arithmetical invariants apply.
    We also establish results on the elasticity of $T(R)$ and characterize when $T(R)$ is half-factorial.
    (Factoriality, that is, torsion-free Krull--Remak--Schmidt--Azumaya, is characterized by a theorem of Levy--Odenthal.)
    The monoids of graph agglomerations introduced here are also of independent interest.
\end{abstract}

\maketitle
\section{Introduction}

A (commutative) ring $R$ is a \defit{Bass ring} if it is noetherian, reduced (zero is the only nilpotent element), has module-finite integral closure, and every ideal of $R$ is $2$-generated \cite{levy-wiegand85}.
Bass rings arise naturally in geometry---as coordinate rings of (not necessarily irreducible) affine algebraic curves whose only singularities are double points, in number theory---for example, as quadratic orders, and in representation theory---in the form of $\bZ[G]$ with $G$ a finite abelian group of square-free order.
Bass rings have Krull dimension at most one and belong to the larger class of stable rings --- rings in which every ideal that contains a nonzerodivisor projective over its ring of endomorphisms --- see \cite{gabelli14,Olberding-16}.

Let $R$ be a Bass ring.
We are interested in direct-sum decompositions of $R$-lattices, that is torsion-free, finitely generated $R$-modules.
In the present setting, these are precisely the maximal Cohen--Macaulay modules.
To understand the representation theory of $R$, one seeks to understand (i) the indecomposable modules, and (ii) the different ways modules decompose as direct sums of indecomposable ones.
Denoting by $T(R)$ the monoid of isomorphism classes of $R$-lattices, together with operation induced by the direct sum, this means studying the factorization theory of the monoid $T(R)$.
This monoid theoretical point of view was pioneered in work of Facchini, Herbera, and Wiegand \cite{facchini-herbera00,facchini-herbera00a,wiegand01,facchini02,facchini-wiegand04}, with later extensions to countably generated modules by Herbera and Příhoda \cite{herbera-prihoda10}; see also the surveys \cite{wiegand-wiegand09,facchini12,baeth-wiegand13} and the book \cite{facchini19}.
It permits the application of techniques originating in the study of the factorization theory of integral domains.
The monoid $T(R)$ is cancellative if the Bass ring $R$ is semilocal, or more generally if $\Pic(R)$ is trivial, but is otherwise typically noncancellative (\cref{rem:noncancellative}).

In his \emph{ubiquity paper}, Bass showed that every $R$-lattice over a Bass ring is a direct sum of ideals \cite{bass63} (the rings were later named after him).
Levy--Wiegand \cite{levy-wiegand85} described $R$-lattices in terms of a \emph{genus} and a \emph{class}, represented by an element of the ideal class semigroup $\Pic(R\mid\overline R)$ of $R$. 
This semigroup is the disjoint union of the Picard groups $\Pic(S)$ of the finitely many intermediate rings $R \subseteq S \subseteq \overline R$.
If every two $R$-lattices in the same genus are isomorphic, then $R$ has \emph{finite representation type \textup{(}FRT\textup{)}} and $T(R)$ is finitely generated.
This is always the case when $R$ is semilocal.
Moreover, Bass rings always have \emph{bounded representation type \textup{(}BRT\textup{)}} \cite{cimen-wiegand-wiegand95}.

Our focus here will be on problem (ii) mentioned above.
In the best case, the decomposition of a lattice into indecomposables is unique up to order and isomorphism, that is, the $R$-lattices satisfy the Krull--Remak--Schmidt--Azumaya property (KRSA).
Equivalently, the monoid $T(R)$ is \emph{factorial}.
The results from the pivotal papers by Levy--Odenthal \cite{levy-odenthal96b,levy-odenthal96a}, applied to the special case of Bass rings, show that $T(R)$ is factorial if and only if (a) every two $R$-lattices in the same genus are isomorphic (equivalently, the group $\Pic(R)$ is trivial), and (b)  every connected component of $\Spec(R)$ contains at most one singular maximal ideal (see \cref{p:lo-factorial}).

In the vast majority of cases the monoid $T(R)$ is therefore \emph{not} factorial.
Even in the semilocal case, where statement (a) in the previous paragraph always holds, the monoid $T(R)$ can be far from factorial.
In these cases we seek to understand the decompositions through the study of \emph{arithmetical invariants} of $T(R)$, that measure the deviation from the uniqueness, respectively, describe the non-uniqueness in qualitative and quantitative ways.

These types of questions arose at first in the factorization theory of integral domains and monoids; we mention the recent surveys \cite{halter-koch08,geroldinger-zhong20}, monographs \cite{geroldinger-halterkoch06,fontana-houston-lucas13}, and proceedings \cite{anderson97,chapman05,chapman-fontana-geroldinger-olberding16}.
For  monoids of modules, this perspective was pursued in \cite{facchini-hassler-wiegand06,diracca07,baeth-luckas11,baeth-saccon12,baeth-geroldinger14,baeth-geroldinger-grynkiewicz-smertnig15} for some classes of rings; also see the survey \cite{baeth-wiegand13}.
Typically, arithmetical invariants have been studied in cancellative settings, with recent work in some noncancellative settings by Geroldinger, Fan, Kainrath, and Tringali \cite{fan-geroldinger-kainrath-tringali17,fan-tringali18}.

For instance, to the class $[M] \in T(R)$ of an $R$-lattice $M$, we associate the \defit{set of lengths} $\sL([M]) \coloneqq \{\,k \in \bN_0 : M \cong N_1 \oplus \cdots \oplus N_k \text{ with $N_i$ indecomposable} \,\}$.
The \defit{system of sets of lengths} of the monoid $T(R)$ is $\cL(T(R)) \coloneqq \{\, \sL([M]) : M \text{ an $R$-lattice} \,\}$.
The system of sets of lengths contains a great deal of information about the direct-sum decompositions of $R$-lattices.
For example, it tells us whether $T(R)$ is \defit{half-factorial}, that is the sets of lengths are all singletons.
If not, we may ask about the \defit{elasticity}, the supremum of $l/k$ such that we can find indecomposables $M_1 \oplus \cdots \oplus M_k \cong N_1 \oplus \cdots \oplus N_l$, or even a description of the structure of the sets of lengths.

To study the system of sets of lengths, we construct a \emph{transfer homomorphism} from $T(R)$ to a simpler monoid.
To the Bass ring $R$ we associate the graph $\cG_R=(V,E,r)$ of \defit{prime ideal intersections} (our graphs are finite, possibly with multiple edges, but no loops).
The graph $\cG_R$ has as its set of vertices the minimal prime ideals of $R$, and has an edge between two minimal prime ideals $\fp$ and $\fq$ for every maximal ideal containing both $\fp$ and $\fq$.
An \defit{agglomeration} on $\cG_R$ is a map $a \colon V \cup E \to \bN_0$ assigning nonnegative numbers to every edge and every vertex in such a way that $a(v) \ge a(e)$ whenever a vertex $v$ is incident with an edge $e$.
The \defit{monoid of agglomerations on $\cG_R$}, denoted $\agg(\cG_R)$, is the additive submonoid of $\bN_0^{V \cup E}$ consisting of all agglomerations on $\cG_R$.

The central result of the present paper is the following (for missing definitions see \cref{s:background}).
\begin{theorem} \label{t:main}
  For every Bass ring $R$, there exists a transfer homomorphism $\theta\colon T(R) \to \agg(\cG_R)$.
  In particular, the monoid $T(R)$ is \emph{transfer Krull of finite type}, we have $\sL([M]) = \sL(\theta([M]))$ for all $R$-lattices $M$, and $\cL(T(R)) = \cL(\agg(\cG_R))$.
\end{theorem}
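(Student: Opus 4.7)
The plan is to build $\theta$ by assigning to each $R$-lattice an agglomeration that records the combinatorial type of its indecomposable summands, and then to verify the two transfer axioms. I would first classify indecomposable $R$-lattices up to genus, combining Bass's theorem that every $R$-lattice is a direct sum of ideals with the Levy--Wiegand genus classification and the local analysis of Bass rings developed in the paper. Each indecomposable has local rank $0$ or $1$ at every minimal prime, and combinatorially the indecomposables fall into exactly two types: ``vertex-type'', supported at a single minimal prime $\fp$ and corresponding to the vertex $v = \fp$ of $\cG_R$; and ``edge-type'', of rank one at two minimal primes $\fp, \fq$ and glued at some singular maximal ideal $\fm \supseteq \fp + \fq$, corresponding to an edge $e$ of $\cG_R$. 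Finer data within a fixed combinatorial type is recorded by the ideal class semigroup $\Pic(R \mid \overline R)$ but does not affect the combinatorial count.

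For the definition of $\theta$, given $[M] \in T(R)$ I would decompose $M \cong N_1 \oplus \cdots \oplus N_k$ into indecomposables and set $\theta([M])(v)$ equal to the rank of $M$ at the minimal prime associated with $v$, and $\theta([M])(e)$ equal to the number of $N_i$ of edge-type $e$. Well-definedness of the vertex values is immediate from the additivity of generic ranks, while independence of the edge values on the chosen decomposition follows from the local structure at each singular maximal ideal: the multiplicity of edge-type summands for $e$ is determined by the isomorphism class of $M_\fm$, where $\fm$ labels $e$. The agglomeration inequality $\theta([M])(v) \ge \theta([M])(e)$ holds because every edge-type summand contributes one to the rank at each of its two endpoints, and additivity of $\theta$ under direct sums is clear from the formula.

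The bulk of the work lies in the lifting axiom, which is the principal obstacle: given $[M]$ and a decomposition $\theta([M]) = a_1 + a_2$ in $\agg(\cG_R)$, one must produce $[M] = [M_1] + [M_2]$ in $T(R)$ with $\theta([M_i]) = a_i$. (The kernel condition $\theta^{-1}(0) = \{0\}$ is immediate, since vanishing of all local ranks forces $M = 0$.) My strategy would be first to realize $a_1$ and $a_2$ as the genera of $R$-lattices $M_1', M_2'$ whose sum lies in the same genus as $M$, via the Levy--Wiegand realization of genera; then to use the $\Pic(R \mid \overline R)$-action on isomorphism classes within a fixed genus, as governed by the Levy--Odenthal theory, to twist one of the summands so that $M_1 \oplus M_2 \cong M$ on the nose. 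The subtlety is that $T(R)$ is typically noncancellative, so this Picard adjustment must respect the isomorphism class of the whole direct sum and not merely stable isomorphism; the combinatorial shape of $\cG_R$ and the fact that edge- and vertex-type summands interact only through the incidence inequalities are precisely what make the twist possible.

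The stated consequences then follow formally. The monoid $\agg(\cG_R)$ is the intersection of $\bN_0^{V \cup E}$ with finitely many rational half-spaces, so it is a finitely generated normal affine monoid, and hence a Krull monoid of finite type; thus $\theta$ exhibits $T(R)$ as transfer Krull of finite type. The length identities $\sL([M]) = \sL(\theta([M]))$ for every $R$-lattice $M$ and $\cL(T(R)) = \cL(\agg(\cG_R))$ are then standard consequences of possessing a transfer homomorphism.
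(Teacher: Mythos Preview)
Your overall strategy---define $\theta$ via local rank data and verify the lifting axiom using the Levy--Wiegand class together with a Picard-group twist---matches the paper's. However, there is a genuine gap in your classification of indecomposable $R$-lattices, and it breaks your definition of $\theta$.

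You assert that every indecomposable $R$-lattice is either vertex-type (supported at a single minimal prime) or edge-type (rank one at exactly two minimal primes, glued at one singular maximal ideal). This is false once $\cG_R$ has a connected component with more than two vertices. An indecomposable $R$-lattice is isomorphic to an ideal, and the set of minimal primes at which it has rank one can be any connected subgraph of $\cG_R$. Concretely, if $\cG_R$ is a path on three minimal primes $\fp_1,\fp_2,\fp_3$ with edges $\fm_1\supseteq\fp_1+\fp_2$ and $\fm_2\supseteq\fp_2+\fp_3$, then $R$ itself is an indecomposable lattice of rank $(1,1,1)$, fitting neither of your types. Under the correct transfer homomorphism the indecomposables of $T(R)$ map onto the atoms of $\agg(\cG_R)$, which by \cref{p:agg-atoms} are the indicators of \emph{all} nonempty connected subgraphs, not merely single vertices and single edges.

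This error has a concrete consequence: your two descriptions of $\theta([M])(e)$ disagree. In the path example with $M=R$, the global count of edge-type summands at $e=\fm_1$ is $0$ (the unique summand $R$ is not edge-type in your sense), whereas the local description you invoke for well-definedness---the multiplicity of rank-$(1,1)$ summands in $M_{\fm_1}=R_{\fm_1}$---gives $1$. The local description is the correct one, and it is exactly what the paper uses: \cref{t:transhom} records the local isomorphism classes $[M_\fm]$ at singular $\fm$ and establishes the lifting axiom via the Picard adjustment you describe; \cref{p:transfer-deduplicated} then collapses the local data at each edge $\fm$ to a single count of rank-$(1,1)$ summands via \cref{l:duplicate columns}; and \cref{p:transfer-bass-agg} identifies the resulting Diophantine monoid with $\agg(\cG_R)$. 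Once you replace your global count by the local one, the rest of your outline coincides with the paper's argument.
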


Thus, monoids of graph agglomerations serve as combinatorial models for the factorizations in $T(R)$, and consequently direct-sum decompositions of $R$-lattices; many questions about the arithmetic in $T(R)$ can be answered in $\agg(\cG_R)$ instead, and in particular, this applies to all questions about sets of lengths.
The algebraic structure of $\agg(\cG_R)$ is very easy; it is a Diophantine monoid, and therefore a finitely generated reduced Krull monoid.
Krull monoids are one of the central objects in factorization theory. 
There, factorizations are typically studied by means of \emph{monoids of zero-sum sequences} and techniques from combinatorial and additive number theory; see \cite[Chapter 1]{geroldinger-rusza09} and \cite{schmid16}.
However, for monoids of graph agglomerations it turns out to be more fruitful to study the factorization theory directly.

In \cref{s:agglomerations} we therefore initiate a study of the arithmetic of monoids of graph agglomerations.
Because agglomerations with values in $\{0,1\}$ may be identified with subgraphs of $\cG_R$, an agglomeration can be viewed as a sum of subgraphs of $\cG_R$.
It is therefore not surprising that the factorization theory is strongly linked to the structure of the underlying graph; \cref{p:agg-atoms,p:davenport,exm:elasticity,thm:rho,thm:rhok,t:factorial} illustrate this.
It seems that this class of monoids has not been considered before (certainly not from the point of factorization theory) and we hope that the link between arithmetical properties and graph-theoretical properties may be of independent interest to the factorization theory community. 
We have therefore ensured that \cref{s:agglomerations} is largely self-contained, making reference only to \cref{s:background} for notation and background.

\Cref{t:main} carries with it a large number of implications: first, general finiteness and structural results about finitely generated Krull monoids are applicable to $T(R)$; second, the specific results about monoids of graph agglomerations yield corresponding quantitative results for $T(R)$.
We refer to the main result, \cref{t:main-finiteness} below, and only point out a few particular implications here.

As before $R$ is a Bass ring.
Levy--Odenthal \cite[Theorem 1.3]{levy-odenthal96b} implies that $T(R)$ is factorial if and only if $\Pic(R)$ is trivial and there is at most one singular maximal ideal in every connected component of $\Spec(R)$.
In particular, every connected component of $\cG_R$ has at most one edge.
\Cref{t:main,t:factorial} immediately yield the following.
\begin{corollary}
  The monoid $T(R)$ is half-factorial if and only if the graph $\cG_R$ is acyclic.
\end{corollary}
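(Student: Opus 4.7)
The corollary is an immediate deduction from \cref{t:main} together with the graph-theoretic classification in \cref{t:factorial}. The plan is first to use \cref{t:main} to reduce half-factoriality of $T(R)$ to half-factoriality of $\agg(\cG_R)$, and then to invoke \cref{t:factorial} for the graph $\cG_R$.

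For the reduction step, \cref{t:main} provides a transfer homomorphism $\theta\colon T(R) \to \agg(\cG_R)$ satisfying $\cL(T(R)) = \cL(\agg(\cG_R))$. Since half-factoriality is precisely the condition that every set of lengths is a singleton, the property depends only on the system of sets of lengths; hence $T(R)$ is half-factorial if and only if $\agg(\cG_R)$ is. Applying \cref{t:factorial} to $\cG = \cG_R$ then yields the claim.

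The real content is thus in \cref{t:factorial}, whose proof I expect to proceed as follows. When $\cG$ is a forest, the linear functional $a \mapsto \sum_{v \in V} a(v) - \sum_{e \in E} a(e)$ should take the value $1$ on every atom of $\agg(\cG)$, reflecting the Euler identity $|V'| - |E'| = 1$ for any finite tree $(V',E')$ supporting an atom. This functional is additive on the monoid, so it records the length of any atomic factorization and forces all lengths to agree. Conversely, if $\cG$ contains a cycle $C$, then one obtains an atomic agglomeration supported on $C$ whose vertex-sum and edge-sum coincide; exchanging this atom against a suitable combination of single-vertex atoms and single-edge atoms supported on $C$ then produces two factorizations of the same element with different lengths, violating half-factoriality.

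The main obstacle is not the corollary itself---which is a one-line deduction from the cited results---but the atom analysis inside \cref{t:factorial}: one must classify enough of the atoms of $\agg(\cG)$ to verify the length functional on forests, and on the cyclic side one must exhibit explicit length-differing factorizations of a common element.
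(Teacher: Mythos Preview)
Your deduction is correct and matches the paper exactly: the corollary is stated there as an immediate consequence of \cref{t:main} and \cref{t:factorial}, with no further argument given. Your speculative sketch of \cref{t:factorial} is also on target for the acyclic direction (the paper uses precisely the length functional $a \mapsto \sum_{v} a(v) - \sum_{e} a(e)$, via \cref{l:hf}); for the cyclic direction the paper instead embeds $\agg(\cC_n)$ or $\agg(\cB_2)$ as a divisor-closed submonoid and invokes the explicit elasticity computations of \cref{exm:elasticity} and \cref{exm:bk}, rather than the atom-exchange you describe, but this is tangential to the corollary itself.
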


A further consequence is that there exist semilocal Bass rings where $T(R)$ has arbitrarily large elasticity.
Even more, for every $l \ge 0$ there exists a semilocal Bass ring over which there exist indecomposable lattices $M_1$,~$M_2$ and $N_1$, $\ldots\,$,~$N_l$ with $M_1 \oplus M_2 \cong N_1 \oplus \cdots \oplus N_l$ (\cref{t:main-finiteness}).
This is in contrast to one-dimensional \emph{local} rings with FRT, where one always has that $\rho(T(R)) \le 3/2$ by Baeth--Luckas \cite[Theorem 3.4]{baeth-luckas11}, and Prüfer rings with the $1\tfrac{1}{2}$-generator property and small zero-divisors, where the monoid of finitely generated projective modules is always half-factorial \cite[Theorem 5.1]{baeth-geroldinger-grynkiewicz-smertnig15}.

The arithmetic of ideal semigroups in stable domains was recently studied by Bashir, Geroldinger, and Reinhart \cite{bashir-geroldinger-reinhart20}.
In particular the semigroup of nonzero [invertible] ideals of $R$ is transfer Krull if and only if it is half-factorial \cite[Theorem 5.10]{bashir-geroldinger-reinhart20}.
This is in contrast to $T(R)$, where many examples of non-half-factorial transfer Krull monoids arise.

Finally, we also show that every graph appears as prime ideal intersection graph of a semilocal Bass ring (\cref{p:realization}).

The paper is structured as follows.
In \cref{s:background}, we introduce notation and key concepts from factorization theory.
In \cref{s:modules}, we first describe $T(R)$ for local Bass rings (\cref{p:indecomposables-local}, \cref{t:local-factorial}).
Combining the local information with work of Levy--Wiegand \cite{levy-wiegand85} and the package deal theorems of Levy--Odenthal \cite{levy-odenthal96b} yields a transfer homomorphism in the non-local case (\cref{t:transhom}).
The transfer homomorphism is made more explicit and simplified in \cref{p:transfer-diophantine,p:transfer-deduplicated}.
In \cref{s:agglomerations} we turn our attention to monoids of graph agglomerations, describe their algebraic structure, and study their arithmetic.
Although several of the more basic results in this section could be deduced from corresponding results of Bass rings via the transfer homomorphism (for instance, the characterization of atoms in \cref{p:agg-atoms} corresponds to the fact that indecomposable $R$-lattices are ideals of $R$), we develop them separately from first principles.
This gives a cleaner and self-contained presentation for monoids of graph agglomerations.
Finally, in \cref{s:tie-in}, we identify the codomain of the transfer homomorphism from \cref{p:transfer-deduplicated} as a monoid of agglomerations on $\cG_R$, proving \cref{t:main}.
We also prove the realization result, \cref{p:realization}.

\subsection*{Acknowledgments} Smertnig was supported by the Austrian Science Fund (FWF) project J4079-N32. Part of the research was conducted while Smertnig was visiting the University of Waterloo; he would like to thank the Department of Pure Mathematics for the hospitality received.

\section{Background and Notation} \label{s:background}

Our goal is to describe the degree to which direct-sum decompositions of lattices (finitely generated torsion-free modules) over Bass rings are nonunique. We take the approach of instead studying the arithmetic of the monoid $T(R)=\{\,[M] : M\text{ an $R$-lattice}\,\}$ of isomorphism classes of $R$-lattices, with operation induced by the direct sum: $[M]+[N]=[M\oplus N]$. Since the set of $R$-lattices is closed under isomorphisms, direct summands, and finite direct sums, we can glean a great deal of information from the monoid $T(R)$. For example, the atoms of $T(R)$ are the classes of indecomposable $R$-lattices and the ring $R$ has \emph{finite representation type \textup{(}FRT\textup{)}} (up to isomorphism, there exist only finitely many indecomposable $R$-lattices) if and only if the monoid $T(R)$ is finitely generated.

Therefore, in the remainder of this section we introduce the necessary tools from factorization theory of commutative monoids and the arithmetical invariants that we will consider as well as the means of which to transfer information from the (usually) noncancellative monoid $T(R)$ to a cancellative, in fact Krull, monoid.

\subsection*{Monoids and factorization theory}

By a \defit{monoid}, we always mean a commutative monoid; that is, a commutative semigroup with identity. If the identity is the only invertible element, the monoid is \defit{reduced}. 
\footnote{A ring is reduced if and only if $0$ is the only nilpotent element; this corresponds neither to the monoid $(R,\cdot)$ nor to the monoid $(R,+)$ being reduced, but unfortunately both terminologies are standard. We leave it to the reader to divine the correct meaning from context.}
Since we are concerned only with monoids of modules and monoids of graph agglomerations, we will stick to additive notation throughout. A monoid $(H,0,+)$ is \defit{cancellative} if an equation of the form $a+b=a+c$ implies $b=c$. A reduced monoid $H$ satisfies the weaker property of being \defit{unit-cancellative} if, whenever $a+u=a$ with $a$,~$u \in H$, then $u=0$. 

A nonzero element $a$ of a reduced monoid $H$ is an \defit{atom} (or \defit{irreducible}) if $a=b+c$ with $b$,~$c \in H$ implies $b =0$ or $c=0$.
It is \defit{prime} provided whenever $a$ is a summand of $b+c$ for some $b$,~$c \in H$, then $a$ is a summand of $b$ or of $c$.
A weaker property, an atom $a \in H$ is \defit{absolutely irreducible} if every multiple $na$ has a unique factorization, namely $na= a+ \cdots + a$. The monoid $H$ is \defit{atomic} if every nonzero element $a \in H$ can be expressed as a sum of atoms. Assume from now on that $H$ is atomic.
In this case it makes sense to study the different factorizations of $a$ and this may be accomplished by means of studying suitable arithmetical invariants. We introduce those most important to this work. For additional invariants and further details we refer to the survey by Geroldinger--Zhong \cite{geroldinger-zhong20} and the references cited therein, as well as the monograph \cite{geroldinger-halterkoch06}. For the rest of this section we assume the following standing hypotheses.

\centerline{\emph{The monoid $H$ is nonzero, reduced, and atomic.}}

Throughout, the set of nonnegative integers is denoted by $\bN_0$, and $[a,b]=\{\, x \in \bZ : a \le x \le b \,\}$ denotes a discrete interval. The \defit{set of lengths} of a non-unit $a$ is 
\[
\sL(a) \coloneqq \{\, k  \in \bN_0 : \text{there exist atoms $a_1$, $\ldots\,$,~$a_k \in H$ with $a=a_1 + \dots  + a_k$} \,\},
\]
and $\sL(u)\coloneqq\{0\}$ for any unit $u$. The \defit{system of sets of lengths} $\cL(H) \coloneqq \{\, \sL(a) : a \in H\,\}$ is one of the basic arithmetical invariants of $H$. 

As an exact description of $\cL(H)$ is usually not possible for interesting classes of monoids $H$, simpler invariants are used to describe the structure of sets of lengths. The \defit{elasticity} of a nonzero $a \in H$ is $\rho(a) \coloneqq \sup \sL(a) / \min \sL(a) \in \bQ_{\ge 1} \cup \{\infty\}$. We set $\rho(0)\coloneqq 1$ and define the \defit{elasticity of $H$} to be $\rho(H) \coloneqq \sup\{\, \rho(a) : a \in H \,\}$. 
The elasticity is \defit{accepted} if there exists $a\in H$ with $\rho(a)=\rho(H)$.
For each $k \ge 2$, the \defit{refined elasticity} is $\rho_k(H) \coloneqq \sup\{\, \sup\sL(a) : a \in H \text{ with } k \in \sL(a) \,\} \in \bN_{\ge 2} \cup \{\infty\}$. For $k\geq 2$, one has $\rho_k(H)=\sup\,\mathcal U_k(H)$ where \[ \mathcal U_k(H)=\{\, n\colon a_1+\cdots +a_k=b_1+\cdots +b_n\text{ for atoms $a_1$, $\ldots\,$,~$a_k$, $b_1$, $\ldots\,$,~$b_n$} \,\} \] is the \defit{union of sets of lengths containing $k$}. One can show
\[
\rho(H) = \sup_{k \ge 2} \rho_k(H)/k = \lim_{k\to\infty} \rho_k(H)/k.
\]
If $\sL(a)=\{\, l_1 < l_2 < \cdots \,\}$, we set $\Delta(a)\coloneqq \{\, l_{i+1}-l_i : i \ge 1 \,\}$ to be the \defit{set of distances} of $a$.
Let $\Delta(H) \coloneqq \bigcup_{a \in H} \Delta(a)$.

As a general rule, larger elasticities, and more wildly behaved length sets are indicative of highly nonunique factorization. The monoid $H$ is \defit{half-factorial} if it is atomic and $\card{\sL(a)} = 1$ for every $a \in H$.
For an atomic monoid, this is equivalent to $\Delta(H) = \emptyset$, respectively, $\rho(H) = 1$. Even when factorization in $H$ is not unique, length sets and systems of sets of lengths can be very well structured (see \cref{t:krull-finiteness} below).

Other invariants provide finer measures of how nonunique factorization can be. Here we briefly discuss two such invariants that are mentioned in subsequent sections. 
Given a reduced cancellative monoid $H$ and two factorizations $z:=a_1+\cdots +a_l+b_1+\cdots +b_m$ and $z':=a_1+\cdots +a_l+c_1+\cdots +c_n$ with each $a_i$,~$b_j$,~$c_k$ an atom of $H$ and with $b_j\not=c_k$ for any pair $j$,~$k$. The \defit{distance} between $z$ and $z'$ is $\mathsf d(z,z')=\max\{m,n\}$. Though $\mathsf d(z,z')=0$ whenever $H$ is factorial and $z$,~$z'$ are factorizations of the same element, even when $H$ is half-factorial, the distance $d(z,z')$ can be arbitrarily large. The catenary degree provides a refinement and is defined as follows. For $a\in H$, the \defit{catenary degree} $\mathsf c(a)$ of $a$ is the smallest nonnegative integer $N$ so that given any two factorizations $z$ and $z'$ of $a$, there is a chain of factorizations $z=z_0$, $z_1$, $\ldots\,$,~$z_n=z'$ such that $\mathsf d(z_{i-1}, z_i)\leq N$ for all $i\in [1,n]$. Then the catenary degree of $H$ is $\mathsf c(H)=\sup\{\,\mathsf c(a)\colon a\in H\,\}$. It can be shown that $\mathsf c(H)=0$ if and only if $H$ is factorial. If $H$ is cancellative and not factorial, then $\mathsf c(H)\geq 2$.

For a reduced monoid $H$, and elements $a$,~$b\in H$, we set $\omega(a,b)$ to be the smallest $N\in \mathbb N_0\cup\{\infty\}$ with the following property: For all $n\in \mathbb N$ with $a_1$, $\ldots\,$,~$a_n\in H$ and $a=a_1+\cdots +a_n$, if $a=b+c$ for some $c\in H$, there exists some $I\subseteq [0,N]$ and $d\in H$ with $b+d=\sum_{i\in I}a_i$. Then $\omega(H,b)\coloneqq\sup\{\,\omega(a,b) : a\in H\,\}$ and $\omega(H)\coloneqq\sup\{\,\omega(H,b) : b \text{ is an atom}\,\}$. Put more simply, if $\omega(H,b)=N$, then whenever $b$ is a summand of a sum of elements, it must be a summand of some subsum of no more than $N$ elements. It is clear that $\omega(H, b)=0$ if and only if $b$ is a unit and $\omega(H, b)=1$ if and only if $b$ is prime. Thus the $\omega$-invariant provides a measure for how far an element is from being prime and $\omega(H)$ measures how far, even a half-factorial monoid is from being factorial.

A submonoid $S \subseteq H$ is \defit{divisor-closed}, provided that for every $s \in S$ also all the summands of $s$, as considered in $H$, are already contained in $S$.
It is clear that all questions about factorizations of an element $a \in H$ may be studied in a divisor-closed submonoid containing $a$.

\subsection*{Transfer homomorphisms and Krull monoids}

Transfer homomorphisms are a key tool in studying invariants as they allow one to transfer many arithmetic results from simpler objects to the objects of interest.

\begin{definition}
    Let $(H,0_H,+)$ and $(T,0_T,+)$ be reduced atomic monoids with neutral elements $0_H$ and $0_T$, respectively.
    A surjective monoid homomorphism $\theta\colon H \to T$ is a \defit{transfer homomorphism} if it satisfies the following properties.
    \begin{enumerate}
        \item $\theta^{-1}(0_T) = 0_H$.
        \item If $\theta(a)=s+t$ for some $a\in H$ and $s$,~$t\in T$, then there exist $b$,~$c \in H$ with $a=b+c$ and such that $\theta(b)=s$ and $\theta(c)=t$.
    \end{enumerate}
\end{definition}

Since transfer homomorphism allow the lifting of factorizations from $T$ to $H$, they transfer arithmetical information from $T$ to $H$.
Specifically, we recall the following. A proof in the cancellative setting, that carries over to the more general setting, can be found in \cite[Proposition 3.2.3]{geroldinger-halterkoch06}. 

\begin{proposition}
\label{p:transfer-implication}
    Let $\theta\colon H \to T$ be a transfer homomorphism between reduced atomic monoids.
    Then $\sL(a) = \sL(\theta(a))$ for every $a \in H$.
    In particular, we have $\cL(H) = \cL(T)$ and therefore $\rho(H)=\rho(T)$, for all $k \ge 2$ also $\rho_k(H) = \rho_k(T)$ and $\Delta(H) = \Delta(T)$.
\end{proposition}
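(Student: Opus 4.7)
The plan is to deduce the key identity $\sL(a) = \sL(\theta(a))$ from two intermediate facts about atoms: that $\theta$ sends atoms of $H$ to atoms of $T$, and that every preimage under $\theta$ of an atom of $T$ is an atom of $H$. Once these are established, the equality of length sets is a straightforward matter of pushing factorizations forward and lifting them back. The remaining assertions of the proposition then fall out of the definitions: $\cL(H) = \cL(T)$ follows from surjectivity of $\theta$ together with the preservation of $\sL$, and the equalities $\rho(H)=\rho(T)$, $\rho_k(H)=\rho_k(T)$, and $\Delta(H) = \Delta(T)$ are immediate since each of these invariants depends only on the system of sets of lengths.

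For the forward direction on atoms, let $a \in H$ be an atom. Then $\theta(a) \ne 0_T$ by property (1) of a transfer homomorphism. If $\theta(a) = s + t$ in $T$, property (2) produces $b, c \in H$ with $a = b + c$, $\theta(b) = s$, and $\theta(c) = t$; since $a$ is an atom, either $b = 0_H$ or $c = 0_H$, and property (1) then forces $s = 0_T$ or $t = 0_T$. Hence $\theta(a)$ is an atom of $T$. For the backward direction, let $t \in T$ be an atom and $a \in H$ with $\theta(a) = t$. Then $a \ne 0_H$, and by atomicity of $H$ we can write $a = a_1 + \cdots + a_m$ with each $a_i$ an atom of $H$. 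Applying $\theta$ and using the forward direction, $t = \theta(a_1) + \cdots + \theta(a_m)$ is an expression of $t$ as a sum of $m$ atoms of $T$; since $t$ itself is an atom, necessarily $m = 1$, so $a = a_1$ is an atom of $H$.

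With these facts in hand, the equality $\sL(a) = \sL(\theta(a))$ is direct. Any factorization $a = a_1 + \cdots + a_k$ into atoms of $H$ pushes forward to $\theta(a) = \theta(a_1) + \cdots + \theta(a_k)$, a factorization into atoms of $T$ by the first fact, showing $\sL(a) \subseteq \sL(\theta(a))$. Conversely, given $\theta(a) = t_1 + \cdots + t_k$ with each $t_i$ an atom of $T$, iterated application of property (2) yields $b_1, \ldots, b_k \in H$ with $a = b_1 + \cdots + b_k$ and $\theta(b_i) = t_i$; the second fact ensures that each $b_i$ is an atom of $H$, so $k \in \sL(a)$. The only point worth flagging as a potential pitfall—and the reason both directions on atoms are needed—is that lifting via property (2) alone does not automatically produce an atomic factorization in $H$; one genuinely needs the preimage-of-atom-is-atom statement to control the lengths. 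Beyond this, there is no substantive obstacle, and the ``in particular'' statements follow at once.
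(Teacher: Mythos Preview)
Your proof is correct and is essentially the standard argument; the paper itself does not give a proof but only cites \cite[Proposition 3.2.3]{geroldinger-halterkoch06}, where the same reasoning is carried out. One incidental remark: your backward-direction claim (preimages of atoms are atoms) can be argued even more directly without invoking atomicity of $H$---if $a = b + c$ with $b,c \ne 0_H$, then property~(1) gives $\theta(b),\theta(c) \ne 0_T$, contradicting that $\theta(a)$ is an atom---but your version via a factorization of $a$ is equally valid.
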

From a factorization-theoretic standpoint, the most widely studied cancellative monoids are Krull monoids.
Krull monoids have numerous equivalent characterizations (see \cite[Theorems 2.3.11 and 2.4.8]{geroldinger-halterkoch06}), we recall two.
A monoid homomorphism $\varphi\colon H \to D$ is a \defit{divisor homomorphism} if, whenever $\varphi(a)$ is a summand of $\varphi(b)$ in $D$ for $a$,~$b \in H$, then $a$ is a summand of $b$ in $H$.
A submonoid $H \subseteq D$ is \defit{saturated} if the inclusion is a divisor homomorphism; for cancellative monoids this is equivalent to $\quo(H) \cap D = H$.
Here $\quo(H) \subseteq \quo(D)$ denote the quotient groups of the respective monoids.
A \defit{divisor theory} is a divisor homomorphism $\varphi \colon H \to \bN_0^{(I)}$ such that every standard basis vector $\vec e_i$ of $\bN_0^{(I)}$ can be expressed as $\vec e_i = \min\{ \varphi(a_1), \ldots\, \varphi(a_k) \}$ with $a_1$,~$\ldots\,$,~$a_k \in H$.
A cancellative monoid $H$ is a \defit{Krull monoid} if there exists a divisor homomorphism from $H$ into some free monoid $\bN_0^{(I)}$.
Equivalently, the monoid $H$ has a divisor theory.

A Krull monoid $H$ is finitely generated if and only if this divisor theory can be taken into a finitely generated free monoid $\bN_0^n$.
In this case $n$ is the \defit{number of prime divisors}.
Finitely generated reduced Krull monoids can be characterized as Diophantine monoids, \cite[Theorem 2.7.14]{geroldinger-halterkoch06}.
To be more specific, if $H$ is a finitely generated reduced Krull monoid, then $H\cong \ker(A)\cap \mathbb N_0^t$ where $A$ is some integer-valued matrix.

If $\varphi\colon H \to D$ is a divisor theory, the \defit{divisor class group} of $H$ is
\[
   \Cl(H)\coloneqq \quo(D) / \quo(H).
\]
Because of the uniqueness of divisor theories (\cite[Theorem 2.4.7]{geroldinger-halterkoch06}), the definition of $\Cl(H)$ is independent of the choice of the divisor theory.
The class group $G \coloneqq \Cl(H)$ and its subset $G_0$ containing prime divisors (the images of the standard basis vectors) completely describe the arithmetic of $H$ (cf.~\cite[Chapter 2.5]{geroldinger-halterkoch06}).
The monoid $H$ is factorial if and only if $G$ is trivial.
Krull monoids can also be characterized in terms of monoids of zero-sum sequences over subsets of abelian groups, but for our purposes we recall only that a not necessarily cancellative, or even commutative, monoid $H$ is \defit{transfer Krull of finite type} provided that there is a (weak) transfer homomorphism $\vartheta\colon H\rightarrow \mathcal B(G_0)$ to a monoid of zero-sum sequences $\mathcal B(G_0)$ where $G_0$ is a finite subset of some  abelian group. Equivalently, there is a transfer homomorphism from $H$ to a finitely generated Krull monoid.

When studying $\mathcal B(G_0)$, many of the combinatorial considerations involve the \defit{Davenport constant} $\sD(G_0)$, which is defined to be the length of the longest sequence of elements of $G_0$ which sum to $0$ in $G$ but such that no proper subsequence sums to $0$. Other than in special situations, there is no known formula for $\sD(G_0)$, but finiteness results can still be obtained as one always has $\sD(G_0)$ is finite if $G_0$ is finite.

There has been a great deal of research devoted to the arithmetic of transfer Krull monoids (cf.~\cite{geroldinger-zhong19}) and in \cref{s:modules} we show that the monoid of isomorphism classes of lattices over a Bass ring is transfer Krull of finite type. Consequently, we can measure the degree to which direct-sum decompositions over a Bass ring are not unique by instead studying the arithmetic of certain finitely generated Krull monoids and, in particular, certain Diophantine monoids introduced in \cref{s:modules} as well as monoids of graph agglomerations in \cref{s:agglomerations}.
In these monoids, several general finiteness results hold.
Length sets are almost arithmetical multiprogressions (AAMP) and all unions of sets of lengths are almost arithmetic progressions (AAP). AAPs (resp.~AAMPs) are essentially (unions of) arithmetic progressions, possibly with some gaps at the beginning and/or end of the sequence.
Precise definitions can be found in \cite[Section 2]{geroldinger-zhong20}.

We recall the main finiteness results as discussed in \cite[Sections 3 and 5]{geroldinger-zhong20}; see also \cite[Corollary 3.4.13]{geroldinger-halterkoch06} for more.
In fact, many of these results hold for all finitely generated [cancellative] monoids.

\begin{theorem}\label{t:krull-finiteness}
    Let $H$ be a finitely generated Krull monoid.
    \begin{enumerate}
        \item\label{kf:cat-elasticities} 
        The elasticity $\rho(H) \in \bQ$ is finite and accepted, and
        \[
        \{\, \rho(a) : a \in H \,\} = \{\, q \in \bQ : 1 \le q \le \rho(H) \,\}.
        \]
        \item\label{kf:distances} The set of distances $\Delta(H)$ is finite.
        \item\label{kf:strong-structure-unions} The monoid $H$ satisfies the Strong Structure Theorem for Unions of Sets of Lengths.
        In particular, there is $M \in \bN_0$ such that, for all $k \in \bN$, the unions $\mathcal U_k(H)$ are finite AAPs with difference $\min \Delta(H)$ and bound $M$.
        \item\label{kf:structure-lengths} The monoid $H$ satisfies the Structure Theorem for Sets of Lengths: there is an $M \in \bN_0$ such that every $L \in \mathcal L(H)$ is an AAMP with difference $d \in \Delta(H)$ and bound $M$.
        \item\label{kf:omega} We have $\omega(H) < \infty$.
        \item\label{kf:cat} The set of catenary degrees $\{\, \mathsf c(a) : a \in H \,\}$ is finite. In particular $\mathsf c(H) < \infty$.
    \end{enumerate}
\end{theorem}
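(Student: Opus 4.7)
The plan is to assemble these six items as direct consequences of the well-developed factorization theory of finitely generated Krull monoids recorded in \cite{geroldinger-halterkoch06} and surveyed in \cite{geroldinger-zhong20}; no new argument is required, and the ``proof'' is primarily one of locating the appropriate references and verifying that their hypotheses match our setup.

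The common lever is the following reduction. Since $H$ is finitely generated and Krull, it is a Diophantine monoid by \cite[Theorem 2.7.14]{geroldinger-halterkoch06}; it admits a divisor theory $\varphi \colon H \to \bN_0^n$, and the set $G_0 \subseteq \Cl(H)$ of classes containing prime divisors is finite. A standard construction then produces a transfer homomorphism from $H$ to the monoid of zero-sum sequences $\cB(G_0)$, so by \cref{p:transfer-implication} we have $\cL(H) = \cL(\cB(G_0))$. Because $G_0$ is finite, the Davenport constant $\sD(G_0)$ is finite, which is the key combinatorial lever for (1)--(4).

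For (1) and (2), finiteness of $\rho(H)$ follows from the length bound $\sD(G_0)$ on atoms of $\cB(G_0)$; acceptance together with the density conclusion $\{\,\rho(a) : a \in H\,\} = \bQ \cap [1,\rho(H)]$, and the finiteness of $\Delta(H)$, are standard consequences of finite generation, cf.~\cite[Chapter 1.4 and Theorem 3.4.11]{geroldinger-halterkoch06}. For (3) and (4), the Strong Structure Theorem for Unions and the Structure Theorem for Sets of Lengths over $\cB(G_0)$ with $G_0$ finite are among the central results of the area, appearing in \cite[Chapter 4]{geroldinger-halterkoch06}; they transfer back to $H$ via the equality $\cL(H) = \cL(\cB(G_0))$.

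The remaining items (5) and (6) cannot be read off through \cref{p:transfer-implication}, as $\omega(H)$ and $\mathsf c(a)$ are not length-based invariants. Instead, they follow from a separate principle: every finitely generated cancellative monoid has finite $\omega$-invariant and finite set of catenary degrees, established in \cite[Theorems 3.1.4 and 3.4.11]{geroldinger-halterkoch06} via Dickson's Lemma applied to the finitely many generating relations among atoms. The only genuine subtlety I anticipate is reconciling the formulation of $\omega(H)$ in \cref{s:background} with slightly varying formulations in the literature; this is routine bookkeeping rather than a real obstacle.
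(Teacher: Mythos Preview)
Your approach is the same as the paper's: this is a reference-collection proof, and you correctly identify the transfer to $\cB(G_0)$ with $G_0$ finite as the organizing principle for the length-based invariants, and finite generation directly for $\omega$ and the catenary degree. However, several of your citations point to results that are not actually in \cite{geroldinger-halterkoch06} because they postdate it. The density statement in \ref{kf:cat-elasticities}, that every rational $q \in [1,\rho(H)]$ is realized as $\rho(a)$ for some $a \in H$, is not in the 2006 monograph; the paper attributes it to Geroldinger--Zhong \cite[Theorem~3.1]{geroldinger-zhong19}, with the general finitely generated case due to Zhong \cite{zhong19}. Likewise, the \emph{Strong} Structure Theorem for Unions in \ref{kf:strong-structure-unions} is due to Tringali \cite{tringali19,tringali18}; \cite[Chapter~4]{geroldinger-halterkoch06} only yields the ordinary version. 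For \ref{kf:omega} the paper cites Geroldinger--Hassler \cite{geroldinger-hassler-08}; Theorems~3.1.4 and~3.4.11 of \cite{geroldinger-halterkoch06} concern elasticities and do not treat the $\omega$-invariant. Once these references are corrected your proof aligns with the paper's.
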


\begin{proof}
    These results largely are stated in the survey \cite[Theorems 3.1 and 5.5]{geroldinger-zhong20}, but we give individual references as well.

    \ref{kf:cat-elasticities} 
    That finitely generated cancellative monoids have accepted elasticities can be found in \cite[Theorem 3.1.4]{geroldinger-halterkoch06}.
    Then necessarily $\rho(H)$ is rational.
    That all rational numbers $q$ with $1 \le q \le \rho(H)$ can then be realized as an elasticity of an element is a result of Geroldinger--Zhong in \cite[Theorem 3.1]{geroldinger-zhong19} in the context of transfer Krull monoids.
    A more general result, valid for all finitely generated cancellative monoids was recently established by Zhong in \cite{zhong19}.
    
    \ref{kf:distances}, \ref{kf:cat} Finiteness of the set of catenary degrees and distances is proved by Geroldinger--Zhong in \cite[Theorem 3.1]{geroldinger-zhong20}.

    \ref{kf:strong-structure-unions} 
    What we have stated here is actually the usual version of the Structure Theorem for Unions.
    The Strong Structure Theorem for Unions also gives information about the initial and final segments of the $\mathcal U_k(\agg(\cG))$.
    It was first proved by Tringali in \cite{tringali19} and later again using a more general framework in \cite{tringali18}.
    
    \ref{kf:structure-lengths}
    This result is due to Geroldinger; a proof can be found in \cite[Theorem 4.4.11]{geroldinger-halterkoch06}.
    
    \ref{kf:omega} By Geroldinger--Hassler \cite{geroldinger-hassler-08}.
\end{proof}

\subsection*{Graphs}

By a \defit{graph} we mean a triple $\cG=(V,E,r)$ where $V$ is a finite set of vertices, where $E$ is a finite set of edges (disjoint from $V$) and where $r\colon E \to \{\, \{ v,w\} : v \ne w \in V\, \}$ is a map associating to an edge its two incident vertices.
We write $e \sim v$ if the edge $e$ is incident with the vertex $v$ (that is, $v \in r(e)$).
Observe that our definition allows for multiple edges but not for loops. 
The graph is \defit{simple} if there are no multiple edges; that is, for each pair $v$,~$w\in V$, there is at most one $e\in E$ with $r(e)=\{v,w\}$.

A subgraph consists of a subset of the edges and vertices, satisfying the condition that for every edge of the subgraph we must also take its two incident vertices. Formally, a graph $\cG'=(V',E',r)$ is a \defit{subgraph} of $\cG$ if $V' \subseteq V$, if $E' \subseteq E$, if $r(E') \subseteq \{\, \{ v,w\} : v \ne w \in V'\,\}$, and if $r'$ is the restriction of $r$ to $E'$.

For a vertex $v$ of a graph $\cG=(V,E,r)$, its \defit{degree}, written $\deg_{\cG}(v)=|\{e\in E\colon v\in r(e)\}|$ is the number of distinct edges incident with $v$. A simple graph $\cG$ with vertex set $V$ is \defit{$k$-regular} if $\deg(v)=k$ for all $v\in V$. We write $\cC_n$ for a cycle ($2$-regular) graph on $n$ vertices and $\cK_n$ for the complete ($(n-1)$-regular) graph on $n$-vertices.

\section{Module theory} \label{s:modules}

Throughout this section, let $R$ be a reduced noetherian ring with total quotient ring $K$ and set of minimal prime ideals $\minspec(R) = \{ \fp_1, \ldots, \fp_k\}$.
For convenience, we shall always assume that $R$ is nonzero and $R \ne K$ (our results generally hold, but trivially so, in these cases).
Since $R$ is reduced, the set of zerodivisors is $\fp_1 \cup \cdots \cup \fp_k$. Then $K \cong R_{\fp_1} \times \cdots \times R_{\fp_k}$, and we write $e_i\in K$ to denote the idempotent which has $1$ in the $i$-th coordinate and $0$ everywhere else. We view $R$ as embedded in $K$, and denote the integral closure of $R$ in $K$ by $\overline R$. For an $R$-module $M$, we set $\rank_{\fp_i}(M) = \dim(M \otimes_R R_{\fp_i})$. Then $\rank(M)=(\rank_{\fp_1}(M), \ldots, \rank_{\fp_k}(M))$.

A module $M_R$ is \defit{torsion-free} if the canonical  map $M \to K \otimes_R M$ is injective. Equivalently, $M$ is torsion-free if whenever $0 \ne m \in M$ and $r \in R$ with $rm=0$, then $r$ is a zerodivisor of $R$. An \defit{$R$-lattice} $M$ is a finitely generated torsion-free $R$-module. Such a lattice $M$ can always be viewed as a submodule of $K\otimes_R M$, and it makes sense to define the \defit{coefficient ring} $\cO(M) = \{\, x \in \overline R \mid xM \subseteq M \,\}$ as the largest subring of $\overline R$ acting on $M$. Letting $\varepsilon(M) = \sum\{\, e_i : e_i M = 0 \,\}$, we observe that $M$ is a faithful $R(1-\varepsilon(M))$-module.

A \defit{Bass ring} is a noetherian reduced ring with module-finite integral closure such that every ideal is generated by two elements.
Bass rings always have Krull dimension at most $1$ and they admit the following characterization.

\begin{theorem}[{\cite[Theorem 2.1]{levy-wiegand85}}] \label{tlw:bass}
    Let $R$ be a one-dimensional reduced noetherian ring and assume that $\overline R$ is finitely generated as an $R$-module.
    The following conditions are equivalent.
    \begin{equivenumerate}
        \item \label{tlw:bass:2gen} $R$ is a Bass ring. \textup{(}Every ideal is generated by $2$ elements.\textup{)}
        \item \label{tlw:bass:cyclic}$\overline R / R$ is a cyclic $R$-module.
        \item \label{tlw:bass:summand} Every faithful $R$-lattice $M$ has a direct summand isomorphic to an invertible ideal of $\cO(M)$.
        \item \label{tlw:bass:gorenstein} Every ring between $R$ and $\overline R$ is Gorenstein.
        \item \label{tlw:bass:multiplicity} $R_\m$ has multiplicity at most $2$ for every height-one maximal ideal $\m$.
    \end{equivenumerate}
\end{theorem}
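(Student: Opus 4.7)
The plan is to reduce all five conditions to a local analysis at each maximal ideal of $R$. Conditions \ref{tlw:bass:2gen}, \ref{tlw:bass:gorenstein}, and \ref{tlw:bass:multiplicity} are evidently local; \ref{tlw:bass:cyclic} localizes via the conductor $(R:\overline R)$, whose support is the singular locus; and \ref{tlw:bass:summand} descends through the coefficient ring $\cO(M)$, which has good behavior under localization. Because $\overline R$ is module-finite over $R$, the ring $R$ has Krull dimension at most one, and being one-dimensional reduced Noetherian it is automatically Cohen--Macaulay, so the local rings at maximal ideals fall within the scope of the classical theory.

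The heart of the argument is the equivalence \ref{tlw:bass:2gen} $\Leftrightarrow$ \ref{tlw:bass:gorenstein} $\Leftrightarrow$ \ref{tlw:bass:multiplicity} in the local case, which is a classical theorem essentially due to Bass \cite{bass63}. I would argue as follows: in a one-dimensional Cohen--Macaulay local ring $(R,\fm)$ with infinite residue field (the finite residue case reduces to this after a faithfully flat extension), every ideal is generated by at most $e(R)$ elements, where $e(R)$ is the Hilbert--Samuel multiplicity, and conversely the minimal number of generators of $\fm$ is sharply related to $e(R)$ under the Gorenstein hypothesis. Combining this with Bass's characterization of Gorenstein one-dimensional local rings (type one, equivalently $\operatorname{Ext}^1_R(R/\fm, R)$ has length one) yields the local equivalence. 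The DVR case of multiplicity one is immediate.

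For \ref{tlw:bass:2gen} $\Leftrightarrow$ \ref{tlw:bass:cyclic}, I would analyze the short exact sequence $0 \to R \to \overline R \to \overline R / R \to 0$ locally at each singular maximal ideal $\fm$. Cyclicity of $(\overline R/R)_\fm$ as an $R_\fm$-module is equivalent to $\overline{R_\fm} = R_\fm + R_\fm x$ for some $x$, which via the length formula for the normalization of a one-dimensional reduced local ring is equivalent to $e(R_\fm) \le 2$. For \ref{tlw:bass:summand}, I would follow Bass's original inductive argument: given \ref{tlw:bass:2gen}, the two-generator property for ideals of $\cO(M)$ allows one to split off from any faithful $R$-lattice $M$ a rank-one summand isomorphic to an invertible $\cO(M)$-ideal, proceeding by induction on the minimal number of generators of $M$ over $\cO(M)$ and exploiting that any faithful module contains an ideal of $\cO(M)$ as a submodule. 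Conversely, applying \ref{tlw:bass:summand} to the direct sum of two copies of an arbitrary faithful ideal and comparing coefficient rings forces the two-generator property.

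The main obstacle will be the equivalence involving \ref{tlw:bass:gorenstein} in its \emph{global} form, which quantifies over every intermediate ring $R \subseteq S \subseteq \overline R$. The crucial observation is that intermediate rings correspond bijectively to $R$-subalgebras of $\overline R/R$; under \ref{tlw:bass:cyclic} this cyclic module has only finitely many $R$-submodules, each of which inherits module-finite integral closure (equal to $\overline R$) and, by localization, the local multiplicity bound \ref{tlw:bass:multiplicity}. Applying the local equivalence already established to each such $S$ yields Gorensteinness of $S$. Conversely, the special case $S = R$ of \ref{tlw:bass:gorenstein}, combined with the local equivalence, returns \ref{tlw:bass:2gen}, closing the cycle of implications.
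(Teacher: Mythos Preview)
The paper does not prove this theorem; it is quoted as \cite[Theorem 2.1]{levy-wiegand85} and used as a black box, so there is no ``paper's own proof'' to compare against. Your proposal is therefore an attempt to supply an argument for a result the authors deliberately cite rather than reprove.

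As a sketch of the Levy--Wiegand argument, your outline is broadly along the right lines: the reduction to local rings, the classical Bass equivalence \ref{tlw:bass:2gen}~$\Leftrightarrow$~\ref{tlw:bass:gorenstein}~$\Leftrightarrow$~\ref{tlw:bass:multiplicity} for one-dimensional Cohen--Macaulay local rings, and the inductive splitting-off for \ref{tlw:bass:summand} are indeed the main ingredients. A couple of points deserve caution. First, your handling of \ref{tlw:bass:2gen}~$\Leftrightarrow$~\ref{tlw:bass:cyclic} via ``the length formula for the normalization'' is vague; the precise statement you need is that for a one-dimensional reduced local ring with module-finite normalization, the minimal number of generators of $\overline{R_\fm}$ as an $R_\fm$-module equals the multiplicity $e(R_\fm)$, and this requires some care (an infinite residue field hypothesis, or a faithfully flat extension, as you note elsewhere). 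Second, the converse direction of \ref{tlw:bass:summand} (deducing the two-generator property from the splitting hypothesis) is more delicate than ``comparing coefficient rings''; in Levy--Wiegand the argument goes through a careful analysis of which ideals can appear as summands. Finally, your claim that under \ref{tlw:bass:cyclic} the cyclic module $\overline R/R$ has only finitely many $R$-submodules is not quite what you want: you need finitely many intermediate \emph{rings}, which follows from $\overline R/R$ having finite length (being supported on the finite singular locus), not merely from cyclicity.
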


As Bass showed, every $R$-lattice over a Bass ring decomposes as a direct sum of ideals. This follows from \ref{tlw:bass:summand} together with the observation that $Re$ is a Bass ring for every idempotent $e \in K$.
In particular, every indecomposable $R$-lattice is isomorphic to an ideal of $R$.
This readily yields a description of the indecomposable $R$-lattices for local Bass rings.

\begin{proposition} \label{p:indecomposables-local}
    Let $(R,\m)$ be a local Bass ring.
    Then $R$ has at most two minimal primes and $R$ has finite representation type.
    \begin{enumerate}
        \item \label{il:1} If $\card{\minspec{R}}=1$, that is, $R$ is a domain, then all indecomposable $R$-lattices have rank $1$.
        \item \label{il:2} If $\card{\minspec{R}}=2$, then the ranks of indecomposable $R$-lattices are $(1,0)$, $(0,1)$, and $(1,1)$. Moreover, up to isomorphism, there are unique $R$-lattices of ranks $(1,0)$ and $(0,1)$.
    \end{enumerate}
\end{proposition}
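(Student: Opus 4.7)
The plan is to combine Bass's decomposition theorem (\ref{tlw:bass:summand}) with the multiplicity criterion (\ref{tlw:bass:multiplicity}); the rank-$(1,0)$ and rank-$(0,1)$ cases will reduce to fractional ideals over a DVR.

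First I would bound $|\minspec(R)|$. A Bass ring has Krull dimension at most one; if $\dim R = 0$ then the local reduced noetherian ring $R$ is a field and the statement is trivial, so assume $\dim R = 1$. Then $R$ is Cohen--Macaulay and, since $R$ is reduced, each $R_{\fp_i}$ is a field (of length one over itself). The additivity formula for Hilbert--Samuel multiplicity then gives
\[
    e(R) = \sum_{i=1}^{k} e(R/\fp_i).
\]
By \ref{tlw:bass:multiplicity} we have $e(R) \le 2$, while each $R/\fp_i$ is a nonzero one-dimensional local domain with $e(R/\fp_i) \ge 1$, so $k \le 2$. If $k = 2$, equality forces each $e(R/\fp_i) = 1$, so each $R/\fp_i$ is a one-dimensional local domain of multiplicity one, that is, a DVR --- this is the fact that powers uniqueness.

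Next I would describe the possible ranks. Iterating \ref{tlw:bass:summand} (each proper summand lowers the generic rank), every $R$-lattice is a direct sum of fractional ideals, so every indecomposable $R$-lattice is isomorphic to a nonzero finitely generated $R$-submodule $I$ of $K$. Localizing at $\fp_i$ embeds $I \otimes_R R_{\fp_i}$ into $K \otimes_R R_{\fp_i} \cong R_{\fp_i}$, a one-dimensional vector space, so $\rank_{\fp_i}(I) \in \{0,1\}$; nonzeroness of $I$ excludes the zero rank vector. When $k=1$ this gives \ref{il:1}; when $k=2$ it leaves exactly the three rank vectors $(1,0)$, $(0,1)$, $(1,1)$.

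For the uniqueness claim in \ref{il:2}, if $M$ has rank $(0,1)$ then $M \hookrightarrow M \otimes_R K \cong R_{\fp_2}$, and since $\fp_2 R_{\fp_2} = 0$ the $R$-action on $R_{\fp_2}$ factors through $R/\fp_2 \hookrightarrow R_{\fp_2} = \mathrm{Frac}(R/\fp_2)$. Thus $M$ is a nonzero finitely generated fractional ideal of the DVR $R/\fp_2$, hence $M \cong R/\fp_2$ as $R$-modules; the rank-$(1,0)$ case is symmetric. For finite representation type, I would invoke \ref{tlw:bass:cyclic}: since $\overline R / R$ is a cyclic, hence Artinian, $R$-module, there are only finitely many intermediate rings $R \subseteq S \subseteq \overline R$, and each contributes only finitely many isomorphism classes of indecomposable ideals --- the rank-$(1,1)$ indecomposables are invertible ideals in local intermediate rings (hence principal, and isomorphic as $R$-modules to the intermediate ring itself), while the other two ranks contribute one each. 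The main obstacle will be making the rank-$(1,1)$ bookkeeping precise: one must check that the coefficient ring $\cO(M)$ of a faithful indecomposable is local (any nontrivial idempotent in $\cO(M)$ would split $M$), so that the Picard-type argument actually applies.
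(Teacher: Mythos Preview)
Your argument for bounding $\lvert\minspec R\rvert$ via the additivity formula for multiplicity, and for parts \ref{il:1} and \ref{il:2}, is correct and essentially the same as the paper's. The paper invokes Matsumura's Theorem~14.7, which is exactly the associativity formula you use, and the uniqueness argument by reducing a rank-$(1,0)$ or $(0,1)$ lattice to a fractional ideal over the DVR $R/\fp_i$ is parallel to the paper's (the paper shows $\fp_i M = 0$ by localizing; you do it by embedding $M$ into $R_{\fp_j}$ and noting $\fp_j R_{\fp_j}=0$).

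The difference, and the gap, is in the FRT claim. The paper simply cites the Drozd--Roiter conditions. Your route via overrings is reasonable in outline---faithful indecomposable rank-$(1,1)$ lattices are indeed invertible over their (local) coefficient rings, hence isomorphic to those rings---but the step ``$\overline R/R$ is cyclic, hence Artinian, hence there are only finitely many intermediate rings'' does not hold up. First, cyclic does not imply Artinian; what is true here is that $\overline R/R$ is supported only at $\m$ and therefore has finite length. Second, and more importantly, even a cyclic Artinian module can have infinitely many submodules when the residue field is infinite (e.g.\ $k[x,y]/(x,y)^2$), so finite length alone does not bound the number of intermediate $R$-modules or intermediate subrings. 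The finiteness of overrings of a local Bass ring is true, but it requires a sharper argument---for instance, showing that $\overline R/R$ is uniserial, or simply appealing to the Drozd--Roiter criterion as the paper does.
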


\begin{proof}
    We have $\dim(R)=1$.
    The ring $R$ has finite representation type since it satisfies the Drozd--Roiter conditions; see Chapter 4 of \cite{leuschke-wiegand12}. If $\dim(R/\fp_i)=0$ for some $i \in [1,k]$, then $\fp_i = \m$ is the unique maximal ideal of $R$, and hence $\dim(R)=0$.
    Thus we have $\dim(R/\fp_i)=1$ for all $i \in [1,k]$.
    Now Theorem 14.7 of \cite{matsumura89} implies $\card{\minspec(R)} \le \mu(R) \le 2$, where $\mu(R)$ is the multiplicity of $R$.
    
    \ref*{il:1} This is clear, since every indecomposable $R$-lattice is isomorphic to an ideal of $R$.

    \ref*{il:2} Since every indecomposable $R$-lattice is isomorphic to an ideal, $(1,1)$, $(1,0)$, and $(0,1)$ are the only possible ranks. They are realized by $R$, $R/\fp_1$, and $R/\fp_2$, respectively.
    
    As before, Theorem 14.7 of \cite{matsumura89} implies $\mu(R/\fp_i) \le 1$ for $i \in \{1,2\}$.
    Thus each $R/\fp_i$ is a discrete valuation ring, and $R/\fp_i$ is the unique indecomposable $R/\fp_i$-lattice.
    If $M$ and $N$ are $R$-lattices of rank $(1,0)$, then $M_{\fp_2}=0=N_{\fp_2}$, and therefore $\fp_1$ annihilates $M_{\fp_2}$ and $N_{\fp_2}$.
    Since $\fp_1 R_{\fp_1} = 0$ as well, we have $(\fp_1 M)_{\fp_i} = 0 = (\fp_1 N)_{\fp_i}$ for $i \in \{1,2\}$.
    Since $M$ and $N$ are torsion-free, this implies $\fp_1 M = 0 = \fp_1 N$.
    Thus $M$ and $N$ are $R/\fp_1$-modules of rank $1$, and hence $M \cong N$ as $R/\fp_1$-modules.
    Consequently, $M \cong N$ as $R$-modules.
 
    Analogously, $R$ has at most one $R$-lattice of rank $(0,1)$.
\end{proof}

\begin{example}
While \cref{p:indecomposables-local} guarantees that each local Bass ring $R$ with two minimal primes has exactly one lattice of rank $(1,0)$ and one of rank $(0,1)$, the ring $R$ can be taken to have arbitrarily many indecomposable lattices with rank $(1,1)$. For example, fix $n\geq 1$ and take $R=k[x,y]_{(x,y)}/(x^2-y^{2n+2})$ for any perfect field of characteristic not $2$, $3$, or $5$. Then $R$ is a local Bass ring with completion $\widehat R\cong k\llbracket x,y\rrbracket/(x^2-y^{2n+2})$. From \cite[Theorem 4.2]{Baeth-07} we know that $\widehat R$ has exactly $n+1$ indecomposable lattices of rank $(1,1)$. Since $\card{\minspec{R}}=\card{\minspec{\widehat{R}}}$ \, \cite[Theorem 6.2]{levy-odenthal96b} implies that $R$ does as well. In fact, the completion $\widehat R$ is a ring with Dynkin type ($A_{2n+1}$) and the constant rank indecomposable lattices are precisely the overrings between $\widehat R$ and its integral closure.
\end{example}

Wiegand, in \cite{wiegand01}, showed that for a local noetherian ring, the homomorphism $T(R) \to T(\widehat R)$ induced by completion is a divisor homomorphism. This provides a standard strategy for studying $T(R)$ in the local case. We use this approach in \cref{t:local-factorial} and refer the reader to \cite{baeth-geroldinger14,baeth-wiegand13} for additional background.

\begin{theorem} \label{t:local-factorial}
    If $R$ is a local Bass ring, then $T(R)$ is factorial.
    More specifically, there is an isomorphism $T(R) \cong \bN_0^t$ for some $t \ge 0$.
\end{theorem}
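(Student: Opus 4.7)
The plan is to transfer the problem to the completion $\widehat R$, apply the Krull--Remak--Schmidt--Azumaya (KRSA) theorem there, and descend using Wiegand's divisor homomorphism. By \cite{wiegand01}, the completion map
\[
\varphi \colon T(R) \to T(\widehat R), \qquad [M] \mapsto [\widehat M],
\]
is a divisor homomorphism between reduced monoids. Since $\widehat R$ is complete local noetherian, endomorphism rings of indecomposable $\widehat R$-lattices are local, so KRSA holds and $T(\widehat R)$ is free on the isomorphism classes of indecomposable $\widehat R$-lattices. I would also verify that $\widehat R$ is a local Bass ring with $\card{\minspec \widehat R} = \card{\minspec R}$ (preservation of the Bass conditions of \cref{tlw:bass} under completion, together with compatibility of completion with the module-finite extension $\overline R$); \cref{p:indecomposables-local} then gives $T(\widehat R) \cong \bN_0^s$ for some finite $s$.

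The main step is to show that $\varphi$ sends the atoms of $T(R)$ injectively to atoms of $T(\widehat R)$. Combining \cref{p:indecomposables-local} with \cref{tlw:bass}\ref{tlw:bass:summand}, an indecomposable $R$-lattice $M$ is either (when $\card{\minspec R}=2$) one of the unique rank $(1,0)$ or $(0,1)$ lattices $R/\fp_i$, or a faithful indecomposable isomorphic to an invertible ideal of $\cO(M)$; indecomposability forces $\cO(M)$ to be a local intermediate ring $S$ with $R \subseteq S \subseteq \overline R$, and invertible ideals of a local ring are principal, so $M \cong S$ as $R$-modules. For the first type, $\widehat{R/\fp_i} \cong \widehat R/\fp_i \widehat R$ is the unique indecomposable $\widehat R$-lattice of the corresponding rank. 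For $M \cong S$, the completion $\widehat S \cong S \otimes_R \widehat R$ is again local and $\widehat R$-finite, embeds as an intermediate ring between $\widehat R$ and $\overline{\widehat R} \cong \widehat{\overline R}$, and is therefore an indecomposable $\widehat R$-lattice by the same structural description applied to $\widehat R$.

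Once atoms map to atoms, the rest is formal: a divisor homomorphism from a reduced atomic monoid into a free monoid that sends distinct atoms to distinct basis vectors makes the source itself free on those atoms, because lengths of factorizations lift uniquely through $\varphi$. Hence $T(R) \cong \bN_0^t$, where $t$ is the number of indecomposable $R$-lattices up to isomorphism. The principal technical obstacle is the structural claim in the previous paragraph---that completion sends local intermediate rings of $R$ to local intermediate rings of $\widehat R$---which in turn rests on $\widehat{\overline R} \cong \overline{\widehat R}$ and on the fact that the completion of a local $R$-finite extension remains local.
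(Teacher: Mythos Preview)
There is a genuine gap: your claim that $\card{\minspec \widehat R} = \card{\minspec R}$ is false in general, and this case is precisely the heart of the matter. A local Bass domain can acquire a second minimal prime upon completion---take for instance $R = k[x,y]_{(x,y)}/(y^2-x^2(x+1))$ over a field of characteristic $\ne 2$; in $k\llbracket x,y\rrbracket$ the element $1+x$ is a unit square, so $y^2-x^2(1+x)$ factors and $\widehat R$ has two minimal primes $\fP_1$,~$\fP_2$. In this situation your key step fails outright: the $\widehat R$-lattice $\widehat R/\fP_1 \oplus \widehat R/\fP_2$ is extended from an $R$-lattice $M$ (its ranks at $\fP_1$ and $\fP_2$ agree), and $M$ is indecomposable because neither $\widehat R/\fP_i$ alone is extended. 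Thus $\varphi$ does \emph{not} send atoms to atoms.

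The paper confronts this case directly. After verifying that $\widehat R$ is again a local Bass ring (reducedness of $\widehat R$ is the nontrivial input, via analytic unramifiedness), it does not attempt to match atoms but instead invokes Levy--Odenthal \cite[Theorem~6.2]{levy-odenthal96b} to describe the image of $\varphi$: an $\widehat R$-lattice is extended if and only if its ranks agree at minimal primes of $\widehat R$ lying over the same minimal prime of $R$. When $\card{\minspec R}=\card{\minspec\widehat R}$ this gives $T(R)\cong T(\widehat R)$ at once. In the residual case $\card{\minspec R}=1$, $\card{\minspec\widehat R}=2$, the paper uses \cref{p:indecomposables-local} for $\widehat R$ to list the indecomposables and observes that the extended lattices form a free monoid on the class of $M$ above together with the rank-$(1,1)$ indecomposables $N_1,\ldots,N_n$.

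A minor point even in the case where the minimal primes do match: your assertion that $\cO(M)$ is local is not justified (a module-finite extension of a local ring is only semilocal); fortunately semilocal is enough to force invertible ideals to be principal, so $M\cong\cO(M)$ still follows.
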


\begin{proof}
We first show that the $\m$-adic completion $\widehat R$ of $R$ is a local Bass ring. By standard results, the ring $\widehat R$ is a one-dimensional local noetherian ring of multiplicity at most $2$. Since $R$ is a one-dimensional reduced local noetherian ring and $\overline R$ is a finitely generated $R$-module, the ring $R$ is \emph{analytically unramified}, that is, the completion $\widehat R$ is again reduced (see \cite[Chapter 10]{matlis73} for this non-trivial fact). By Theorem 4.3.4 of \cite{huneke-swanson06}, the integral closure of $\widehat R$ is module-finite over $\widehat R$.

Since $\widehat R$ is a complete local noetherian ring, the monoid $T(\widehat R)$ is factorial by the Krull--Remak--Schmidt--Azumaya Theorem \cite[Corollary 1.10]{leuschke-wiegand12}. Moreover, by \cite{wiegand01}, the map $T(R) \to T(\widehat R)$, $[M] \mapsto [\widehat M]$ is a divisor homomorphism. A result of Levy and Odenthal \cite[Theorem 6.2]{levy-odenthal96b} describes the image of this map: an $\widehat R$-lattice $M$ is \defit{extended} if there exists an $R$-lattice $M_0$ with $M=\widehat{M_0}$. By the Levy--Odenthal result, an $\widehat R$-module $M$ is extended if $\rank_\fP(M)=\rank_\fQ(M)$ whenever $\fP$,~$\fQ \in \minspec(\widehat R)$ with $\fP \cap R = \fQ \cap R$. If $\card{\minspec R} = \card{\minspec \widehat R}$, then this implies $T(R)\cong T(\widehat R)$ and the claim follows. The only way for the equality $\card{\minspec R} = \card{\minspec \widehat R}$ 
to fail is if $\card{\minspec(R)}=1$ and $\card{\minspec \widehat R}=2$. Let $\minspec \widehat R = \{ \fP_1, \fP_2 \}$. Then $\rank \widehat R/\fP_1 = (1,0)$, $\rank \widehat R/\fP_2 = (0,1)$, and by \cref{p:indecomposables-local} these are the only indecomposable $\widehat R$-lattices of rank not equal to $(1,1)$. Let $M$ be an $R$-lattice with completion $\widehat M \cong \widehat R/\fP_1 \oplus \widehat R/\fP_2$, and let $N_1$, $\ldots\,$,~$N_n$ be $R$-lattices representing the indecomposable $\widehat R$-lattices of rank $(1,1)$. Note that $M$ is indecomposable as an $R$-module. 

If $L$ is any $R$-lattice, then
    \[
    \widehat L \cong (\widehat R/\fP_1)^{e_1} \oplus (\widehat R/\fP_2)^{e_2} \oplus \widehat N_1^{f_1} \oplus \cdots \oplus \widehat N_n^{f_n}.
    \]
    with $e_1=e_2$.
    We conclude $L \cong M^{e_1} \oplus N_1^{f_1} \oplus \cdots \oplus N_n^{f_n}$.
    Thus $T(R)$ is factorial.
\end{proof}

\begin{remark}
A similar analysis (see \cite{baeth-luckas11}) has been completed for all one-dimensional local rings with finite representation type. In this more general setting, the ranks (at each minimal prime) of indecomposable lattices are still always $0$ or $1$, but there can be up to three minimal primes and there can be multiple non-isomorphic indecomposable modules with the same non-constant rank. Consequently, the monoid $T(R)$ is often not half-factorial. However, the elasticity of the monoid $T(R)$ in this setting never exceeds $3/2$. As we will see, this is in stark contrast to what can happen for \emph{non-local} Bass rings, where elasticities can be arbitrarily large.
\end{remark}

The localization $R_\m$  of $R$ at a maximal ideal $\m$ is regular if and only if $R_\m$ is a discrete valuation ring.
If $\fp_i$ is a minimal prime, then $R_{\fp_i}$ is a field and hence regular. Thus the singular locus of $R$ is precisely
\[
\begin{split}
\Sing(R) &=  \{\, \fp \in \Spec(R) \mid R_\fp \text{ not regular} \,\}\\
&= \{\, \m \in \maxspec(R) \mid R_\m \text{ is not a DVR}\,\}.
\end{split}
\]

We recall the following basic result.

\begin{lemma} \label{l:finite-non-dvr}
    For a Bass ring $R$, the singular locus $\Sing(R)$ is finite.
\end{lemma}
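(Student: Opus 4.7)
The plan is to control the singular locus via the \emph{conductor} $\mathfrak{c} = (R : \overline R) = \{\, x \in R : x\overline R \subseteq R \,\}$, which is an ideal of both $R$ and $\overline R$. The goal will be to show that $\Sing(R) \subseteq V(\mathfrak{c}) \cap \maxspec(R)$ and that this latter set is finite.

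First I would verify that $\mathfrak{c}$ contains a non-zerodivisor. Since $\overline R$ is module-finite over $R$ and $K \otimes_R \overline R = K$, we have $K \otimes_R (\overline R/R) = 0$. Hence every element of the finitely generated module $\overline R/R$ is annihilated by a non-zerodivisor, and since finitely generated, so is the whole module; but $\mathrm{Ann}_R(\overline R/R) = \mathfrak{c}$. Consequently $\mathfrak{c}$ is not contained in any minimal prime of the reduced ring $R$, so $\dim(R/\mathfrak{c}) = 0$. The ring $R/\mathfrak{c}$ is therefore a $0$-dimensional noetherian, hence artinian, ring, so it has only finitely many prime ideals, all maximal. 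This handles the finiteness of $V(\mathfrak{c}) \cap \maxspec(R)$.

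Next I would show that any maximal ideal $\m$ with $\mathfrak{c} \not\subseteq \m$ satisfies $R_\m = \overline R_\m$. Pick $c \in \mathfrak{c} \setminus \m$; then $c$ is a unit in $R_\m$, and for any $x \in \overline R$, one has $cx \in R$, so $x = c^{-1}(cx) \in R_\m$. Thus $\overline R \subseteq R_\m$ inside the total quotient ring $K$, which forces $\overline R_\m \subseteq R_\m$ and therefore equality. Since $\overline R$ is a one-dimensional reduced noetherian normal ring with module-finite hull, its decomposition along the minimal primes $\fp_1,\ldots,\fp_k$ of $R$ expresses $\overline R$ as a finite product of Dedekind domains (namely the integral closures of $R/\fp_i$ in $R_{\fp_i}$). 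Any localization of such a product is again a finite product of localizations of Dedekind domains at primes, i.e., a product of fields and DVRs. Because $R_\m = \overline R_\m$ is local, exactly one factor survives, and since $\m$ is maximal but not minimal (the minimal case is trivial, $R_\m$ being a field), this factor is a DVR. Thus $R_\m$ is regular, and $\m \notin \Sing(R)$.

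The main obstacle, and the only slightly delicate point, is the last step: identifying $\overline R_\m$ as a DVR even though $R$ (and hence a priori $\overline R_\m$) need not be a domain. The clean way around this is to exploit the product decomposition of $\overline R$ coming from the minimal primes of $R$, which reduces the analysis to Dedekind domains where the result is standard. Taken together, these steps give $\Sing(R) \subseteq V(\mathfrak{c}) \cap \maxspec(R)$, a finite set, completing the proof.
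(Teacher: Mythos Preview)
Your proof is correct and follows essentially the same conductor-based approach as the paper: both show that the conductor $(R:\overline R)$ contains a non-zerodivisor, that $R_\m = \overline R_\m$ whenever $\m$ avoids this non-zerodivisor (or the conductor), and that such $R_\m$ is then a DVR, so $\Sing(R)$ is contained in the finitely many maximal ideals over the conductor. The only difference is cosmetic: the paper simply invokes that a local noetherian integrally closed one-dimensional ring is a DVR, whereas you spell this out via the product decomposition of $\overline R$ into Dedekind domains---a bit more explicit, but the same content.
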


\begin{proof}
    Since the integral closure $\overline R$ is finitely generated as an $R$-module, there exists a non-zerodivisor $x \in R$ in the conductor $(R: \overline R)$.
    If $x \not \in \m$, then $x$ is a unit in $R_\m$ and hence $\overline R_\m = R_\m$.
    In this case, $R_\m$ is a local noetherian integrally closed ring of dimension $1$, and thus a discrete valuation ring.
    This implies that $\widehat R_\m$ is a discrete valuation ring as well. Since $R$ is noetherian and one-dimensional, there exist only finitely many $\m \in \maxspec(R)$ with $x \in \m$.
\end{proof}

We now work towards an explicit description of $T(R)$ for a not necessarily local Bass ring. This description follows from the combination of a Package Deal Theorem by Levy and Odenthal (Theorem 2.9 of \cite{levy-odenthal96b}), that describes the possible genera of $R$-lattices, with a theorem of Levy and R.~Wiegand describing the isomorphism classes in a genus. Recall that if $M$ is an $R$-lattice, then the \defit{genus} of $M$ is the class of all $R$-lattices $N$ such that $M_\m\cong N_\m$ for all $\m\in \maxspec(R)$.

Levy and Wiegand, in Equation (3.2.4) of \cite{levy-wiegand85}, define the \defit{class} $\cl(M)$ of an $R$-lattice. Formally, this is the isomorphism class of a faithful $R$-ideal, namely the $R$-isomorphism class of the $R$-module $$\bigoplus_{n\geq 0} \Big(\bar{\bigwedge}^n M\Big)\left(\varepsilon_n(M)\right)$$ where $\bar{\bigwedge}^n(M)$ denotes the $n$th torsion-free exterior power of $M$ --- the canonical image of $\bigwedge_R^n(M)$ in $\bigwedge_K^n(KM)$ --- and where $\varepsilon_n(M)$ denotes the idempotent $\sum\big\{\, e_i \colon \rank_{\fp_i}(M)=n\, \big\}$. 
As in the Levy--Wiegand paper, by slight abuse of notation, we also use $\cl_R(M)$ to denote a representative of said class.
We rely on the reader to deduce the correct meaning from context.

By \ref{tlw:bass:gorenstein} of \cref{tlw:bass}, if $M$ and $N$ are faithful $R$-lattices, then $\cl(M)=I$ and $\cl(N)=J$ with $I$ and $J$ invertible ideals of their respective coefficient rings. With operation given by $\cl(I)\cl(J)=\cl(IJ)$, the set of $R$-isomorphism classes of faithful lattices contained in $K$ forms the \defit{ideal class semigroup} $\pic(R\mid\overline{R})$ (\cite[Section 4.3]{levy-wiegand85}). The semigroup $\Pic(R\mid\overline{R})$ is the disjoint union of the Picard group $\pic(S)$, where $S$ ranges over all overrings of $R$ (of which there are only finitely many). As such it is an \emph{inverse semigroup}. Note, also, that $IJ$ is a fractional ideal in $S\coloneqq\cO(IJ)=\cO(I)\cO(J)$ and that $IJ$ is the product in $\pic(\cO(IJ))$ of $IS$ and $JS$.
The multiplication of two elements from the ideal class semigroup $\Pic(R\mid \overline R)$ may therefore always be carried out in a suitable Picard group of an overring.
The genus of $\cl(M)$ depends only on the genus of $M$. As $\cO(\cl(M))$ is determined locally, the coefficient ring $\cO(\cl(M))$ also depends only on the genus of $M$. 

The following result of Levy and Wiegand describes faithful $R$-lattices in a given genus. 

\begin{proposition}[{\cite[Theorem 5.2]{levy-wiegand85}}] \label{p:lw-genus}
    Let $M$ be a faithful $R$-lattice.
    Then the map $M \mapsto \cl(M)$ induces a bijection between the genus of $M$ and the group $\pic(\cO(\cl(M)))$.
\end{proposition}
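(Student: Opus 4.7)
The plan is to check that the map $N \mapsto \cl(N)$ from the genus of $M$ to $\Pic(S)$, with $S \coloneqq \cO(\cl(M))$, is well-defined, surjective, and injective. Well-definedness is essentially a consequence of what precedes the statement. Since $\cO(\cl(-))$ is noted to be a genus invariant, every $N$ in the genus satisfies $\cO(\cl(N)) = S$. By \ref{tlw:bass:gorenstein} of \cref{tlw:bass}, the overring $S$ is Gorenstein, and because $\cl(N)$ is a faithful $S$-ideal, it represents a class in $\Pic(S)$ rather than merely in the larger ideal class semigroup $\Pic(R \mid \overline R)$.

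For surjectivity, given $[I] \in \Pic(S)$, I would construct an $N$ in the genus of $M$ with $\cl(N) = [I]$ by modifying a top-rank invertible summand of $M$. Bass's theorem---specifically \ref{tlw:bass:summand} of \cref{tlw:bass}, applied iteratively---decomposes $M$ as a direct sum of invertible ideals $J_1, \ldots, J_s$ over various overrings of $R$, culminating in an invertible ideal over $S$ itself. Replacing this top-ring summand by a suitable twist modifies $\cl(M)$ by a prescribed factor in $\Pic(S)$ while leaving the local structure (hence the genus) unchanged, since invertible ideals of local rings are principal. Hence every $[I] \in \Pic(S)$ is realized.

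For injectivity, suppose $N$ lies in the genus of $M$ with $\cl(N) = \cl(M)$. Using the same iterated decomposition, $M$ and $N$ can be written as direct sums of invertible ideals over the same sequence of overrings, because the rank data (which dictates the coefficient rings appearing) is a genus invariant. The lower-rank summands are uniquely determined up to isomorphism by \cref{p:indecomposables-local} applied locally, and the residual freedom is the class in $\Pic(S)$ of the top summand---which is precisely what $\cl$ extracts. Matching of classes then yields $M \cong N$. The main difficulty is the bookkeeping: one must verify that the exterior-power formula defining $\cl$ really aggregates all non-local information into a single element of $\Pic(S)$, with no hidden global data left over in lower-rank summands. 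I expect this to reduce ultimately to a Chinese-remainder-type patching argument, gluing the semilocal (hence Picard-trivial) data across the finitely many overrings between $R$ and $\overline R$.
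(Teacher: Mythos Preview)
The paper does not prove this proposition; it is quoted as \cite[Theorem 5.2]{levy-wiegand85} and used as a black box throughout. There is consequently no argument in the paper against which to compare your sketch.

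Regarding the sketch itself: the overall shape is reasonable, and the surjectivity step (twist an invertible summand to adjust the class) is in the right spirit, relying as it does on the multiplicativity $\cl(A \oplus B) = \cl(A)\cl(B)$ that the paper records from \cite[Proposition 3.4]{levy-wiegand85}. Your injectivity argument, however, has a genuine gap. You claim the ``lower-rank summands are uniquely determined up to isomorphism by \cref{p:indecomposables-local} applied locally,'' but that proposition concerns local rings and says nothing about global uniqueness; two $R$-lattices in the same genus are by definition locally isomorphic at every maximal ideal, so appealing to local data at this point is circular. The actual content of the theorem is precisely that all of the global ambiguity in a Bass decomposition---a priori an element of $\Pic(\cO(J_i))$ for each ideal summand $J_i$---collapses into the single invariant $\cl(M) \in \Pic(S)$, and this requires the torsion-free exterior-power machinery that Levy--Wiegand develop. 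Similarly, the assertion for surjectivity that the iterated decomposition ``culminates in an invertible ideal over $S$ itself'' presupposes that $\cO(\cl(M))$ is the compositum of the coefficient rings of the summands, which again is part of what must be proved. Your closing remark that ``the main difficulty is the bookkeeping'' is accurate, but that bookkeeping is the substance of the theorem, not an afterthought.
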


If $M$ is unfaithful, the result can be applied over the Bass ring $R(1-e)$ with $e=\varepsilon(M)$; see \cite[(4.3.2)]{levy-wiegand85}. We now explicitly state how classes and Picard groups behave under this reduction.

\begin{lemma} \label{l:unfaithful}
    Let $M$ be an $R$-lattice and $e=\varepsilon(M)$.
    Then
    \[
        \cl_R(M) \cong \cl_{R(1-e)}(M) \oplus Re.
    \]
    In particular,
    \begin{enumerate}
        \item \label{unfaithful:cl} $\cl_{R(1-e)}(M) = \cl_R(M) (1-e)$.
        \item \label{unfaithful:order} $\cO(\cl_{R(1-e)}(M)) = \cO(\cl_R(M))(1-e)$.
        \item \label{unfaithful:pic} $\pic(\cO(\cl_{R(1-e)}(M))) \cong \pic(\cO(\cl_R(M))(1-e))$.
    \end{enumerate}
\end{lemma}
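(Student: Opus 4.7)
My plan is to unpack the definition of the class and see that the only part of $\cl_R(M)$ that is not already an $R(1-e)$-module is the rank-$0$ summand, which contributes exactly $Re$. Recall from the Levy--Wiegand definition
\[
  \cl_R(M) \;=\; \bigoplus_{n \ge 0} \bar{\bigwedge}^n M \cdot \varepsilon_n(M),
\]
where $\varepsilon_n(M) = \sum\{\, e_i : \rank_{\fp_i}(M) = n \,\}$. Since $e_i M = 0$ is equivalent to $\rank_{\fp_i}(M) = 0$, the idempotent $\varepsilon_0(M)$ coincides with $e = \varepsilon(M)$; and because the $e_i$ are orthogonal, $\varepsilon_n(M) \cdot e = 0$ for every $n \ge 1$, so each $\varepsilon_n(M)$ with $n \ge 1$ lies in $R(1-e)$.

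The $n=0$ summand is $\bar{\bigwedge}^0 M \cdot \varepsilon_0(M) = R e$. For $n \ge 1$, since $e$ annihilates $M$, the $R$-action on $M$ factors through $R(1-e)$, hence so does the action on each $\bar{\bigwedge}^n M$ (which is defined as the image of $\bigwedge^n_R M$ in $\bigwedge^n_K(K \otimes M)$, and this image only depends on the $R(1-e)$-module structure of $M$). It follows that for $n \ge 1$ the summand $\bar{\bigwedge}^n M \cdot \varepsilon_n(M)$ may be computed equivalently over $R$ or over $R(1-e)$, and summing over $n \ge 1$ yields $\cl_{R(1-e)}(M)$. This proves the main isomorphism $\cl_R(M) \cong Re \oplus \cl_{R(1-e)}(M)$.

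Assertion \ref{unfaithful:cl} is now immediate: multiplying the decomposition by $1-e$ kills $Re$ and preserves $\cl_{R(1-e)}(M)$, which already lies in $R(1-e)$. For \ref{unfaithful:order}, I would use the ring decomposition $\overline R = \overline R \cdot e \times \overline R \cdot (1-e)$, write any $x \in \overline R$ as $xe + x(1-e)$, and observe that $x$ stabilizes $Re \oplus \cl_{R(1-e)}(M)$ if and only if $xe \in Re$ (which gives the $Re$-factor) and $x(1-e) \in \cO(\cl_{R(1-e)}(M))$. Hence $\cO(\cl_R(M)) = Re \oplus \cO(\cl_{R(1-e)}(M))$, and multiplying by $1-e$ yields \ref{unfaithful:order}. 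Finally, \ref{unfaithful:pic} is now a tautology, since by \ref{unfaithful:order} the two Picard groups are formed from the same ring.

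The main thing to be careful about is step~(2) of the first paragraph: that $\bar{\bigwedge}^n M$ is genuinely insensitive to whether we regard $M$ over $R$ or over $R(1-e)$. This is the only nontrivial compatibility in the argument and is what makes the reduction to the faithful case work cleanly; once that is granted, the rest is an orthogonal-idempotent bookkeeping exercise inside $\overline R = \overline{Re} \times \overline{R(1-e)}$.
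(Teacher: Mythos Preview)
Your proposal is correct and follows essentially the same approach as the paper's proof: both unpack the Levy--Wiegand definition of $\cl_R(M)$, identify the $n=0$ summand as $Re$, and argue that for $n\ge 1$ the torsion-free exterior powers $\bar{\bigwedge}^n M$ are the same whether computed over $R$ or over $R(1-e)$. The paper makes this last point explicit by checking $t_R(\bigwedge_R^n M)=t_{R(1-e)}(\bigwedge_{R(1-e)}^n M)$, which is precisely the compatibility you flag as the only nontrivial step; you also supply a bit more detail for \ref{unfaithful:order} than the paper, which simply declares \ref{unfaithful:cl}--\ref{unfaithful:pic} to be consequences of the main isomorphism.
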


\begin{proof}
    The first isomorphism can be deduced from the definition of the class as follows.
    Let $t_R(M)$ denote the torsion submodule, that is, the submodule of all $m \in M$ that are annihilated by a non-zerodivisor of $R$.
    Then $\overline{\bigwedge}_R^n M \cong \bigwedge^n_R M / t_R(\bigwedge^n M)$.
    The canonical map $\pi\colon R \to R(1-e)$ is a ring epimorphism with kernel $R \cap Re = \{\, r \in R : r = er \,\}$.
    Since $\ker(\pi) \subseteq \operatorname{ann}(M)$, the module $M$ is an $R(1-e)$-module with $\bigwedge_R^n M = \bigwedge_{R(1-e)}^n M$ for every $n \ge 1$.
    One easily checks $t_R(\bigwedge_R^n M) = t_{R(1-e)}(\bigwedge_{R(1-e)}^n M)$, and thus also $\overline\bigwedge_R^n M = \overline\bigwedge_{R(1-e)}^n M$ for $n \ge 1$.
    For $n=0$ we have $\overline\bigwedge_R^0 M = R$ and $\overline\bigwedge_{R(1-e)}^0 M = R(1-e)$.
    Observing $e_i M = 0$ if and only if $\rank_{\fp_i} M_{\fp_i} =0$ yields $\varepsilon^{R}_{0}(M)=e$ and $\varepsilon^{R(1-e)}_{0}(M)=0$.
    Hence $\cl_R(M) \cong \cl_{R(1-e)}(M) \oplus Re$.
    
    The other statements are consequences of the first isomorphism.
\end{proof}

Let $S=\cO(\cl_R(M))$.  Since $S \subseteq Se \times S(1-e)$, the group $\pic(\cO(\cl_{R(1-e)}(M))) \cong \pic(S(1-e))$ is an epimorphic image of $\pic(S)$.

\begin{lemma} \label{l:pic}
    \begin{enumerate}
        \item\label{pic:iso} The genus of an $R$-lattice completely determines its isomorphism class if and only if the Picard group $\Pic(R)$ is trivial.
        \item\label{pic:semilocal} If $R$ is semilocal, then the genus of an $R$-lattice completely determines its isomorphism class.
    \end{enumerate}
\end{lemma}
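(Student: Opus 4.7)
Plan: For part~\ref{pic:semilocal}, I would reduce to part~\ref{pic:iso} by showing that every commutative semilocal ring has trivial Picard group. If $L$ is an invertible module over a semilocal $R$ with maximal ideals $\m_1$, $\ldots\,$, $\m_n$, then by the Chinese remainder theorem together with the local isomorphisms $L_{\m_i}\cong R_{\m_i}$,
\[
L/\mathrm{rad}(R)L \cong \prod_i L/\m_i L \cong \prod_i R/\m_i \cong R/\mathrm{rad}(R);
\]
lifting a generator and applying Nakayama yields $L \cong R$, so $\Pic(R) = 0$, and part~\ref{pic:iso} then gives the conclusion.

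For the forward direction of part~\ref{pic:iso}, I would apply the hypothesis to $M = R$. Since $R$ is faithful of constant rank~$1$, the only nonzero $\varepsilon_n(R)$ is $\varepsilon_1(R) = 1$, so $\cl_R(R) = R$ and $\cO(\cl_R(R)) = R$; Proposition~\ref{p:lw-genus} then identifies the genus of $R$ with $\Pic(R)$, forcing $\Pic(R) = 0$ whenever this genus is a singleton.

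For the reverse direction of part~\ref{pic:iso}, I would assume $\Pic(R) = 0$ and let $M$ be any $R$-lattice with $e = \varepsilon(M)$. Lemma~\ref{l:unfaithful} combined with Proposition~\ref{p:lw-genus} identifies the genus of $M$ with $\Pic(T)$, where $T \coloneqq \cO_{R(1-e)}(\cl_{R(1-e)}(M))$ is an overring of $R(1-e)$ inside $\overline{R(1-e)} = \overline{R}(1-e)$. The core technical claim is that $\Pic(R) = 0$ forces $\Pic(T) = 0$, which I would establish via the Milnor (Mayer--Vietoris) exact sequence for a conductor square: whenever $R' \subseteq T' \subseteq \overline{R'}$ with $R'$ one-dimensional reduced noetherian and $\overline{R'}$ module-finite over $R'$, the conductor $\mathfrak{c} = (R' : T')$ contains $(R' : \overline{R'})$ and hence a nonzerodivisor, so both $R'/\mathfrak{c}$ and $T'/\mathfrak{c}$ are Artinian with trivial Picard groups; the exactness of
\[
\Pic(R') \to \Pic(T') \oplus \Pic(R'/\mathfrak{c}) \to \Pic(T'/\mathfrak{c})
\]
then produces a surjection $\Pic(R') \twoheadrightarrow \Pic(T')$. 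I would apply this first to $R \subseteq R(1-e) \times Re \subseteq \overline{R}$ (a valid intermediate ring because $e \in \overline{R}$, being integral over $R$ via $x^2 - x = 0$) to deduce $\Pic(R(1-e)) \times \Pic(Re) = 0$, and then to $R(1-e) \subseteq T$ to conclude $\Pic(T) = 0$.

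The main obstacle is invoking the Milnor exact sequence correctly; though standard, it requires checking that the conductor square is cartesian (immediate, since $\mathfrak{c}$ is the largest ideal of $T'$ contained in $R'$) and that the bottom row consists of rings with trivial Picard (from Artinianness together with the semilocal Picard triviality used for part~\ref{pic:semilocal}).
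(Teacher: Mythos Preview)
Your proposal is correct and follows the same strategy as the paper: identify each genus with the Picard group of an overring via Proposition~\ref{p:lw-genus}, then use that $\Pic(R)\twoheadrightarrow\Pic(S)$ for every $R\subseteq S\subseteq\overline R$ (and hence for each idempotent factor $Se$). The paper simply asserts this surjection and records part~\ref{pic:semilocal} as well-known, whereas you supply both via the Milnor conductor-square sequence and Nakayama; your explicit handling of the ``only if'' direction by taking $M=R$ is also cleaner than the paper's somewhat elliptic treatment of that implication.
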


\begin{proof}
\ref{pic:iso}
If $R \subseteq S \subseteq \overline R$ is an overring, then there is an epimorphism $\Pic(R) \to \Pic(S)$, and hence also an epimorphism $\Pic(R) \to \Pic(Se)$ for every idempotent $e \in K$.

If $M$,~$N$ are nonisomorphic $R$-lattices in the same genus, then $\Pic(\cO(M))$ is non-trivial by \cref{p:lw-genus}. Thus also $\Pic(R)$ is non-trivial.

Conversely, if the Picard group $\Pic(R)$ is trivial, then so are all groups $\Pic(Se)$.
If $M$,~$N$ are two $R$-lattices in the same genus, then $M$ and $N$ are faithful $Se$-lattices for $S=\cO(M)$ and an idempotent $e=1-\varepsilon(M)$.
The claim follows again from \cref{p:lw-genus}.

\ref{pic:semilocal} This is well-known.
\end{proof}

\subsection{A transfer homomorphism}

We are now prepared to piece together the results of \cite{levy-odenthal96b} and \cite{levy-wiegand85} to describe $T(R)$ as a subsemigroup of $\prod_{\m \in \maxspec(R)} T(R_\m) \times \pic(R\mid\overline R)$. After accomplishing this we will be able to construct a transfer homomorphism from $T(R)$ an easier-to-understand object.

\begin{proposition}
    Let $R$ be a Bass ring.
    Define
    \[
    \varphi \colon T(R) \to \mkern-30mu \prod_{\m \in \maxspec(R)}\mkern-30mu  T(R_\m) \times \pic(R\mid\overline R)
    \]
    by $\varphi([M]) = ( ([M_\m])_{\m}, \cl(M) )$.
    Then $\varphi$ is injective and an element $(([M(\m)])_\m, g)$ is in the image of $\varphi$ if and only if
    \begin{enumerate}
        \item $M(\m) \otimes_{R_\m} K \cong M(\n) \otimes_{R_\n} K$ for all $\m$,~$\n \in \maxspec(R)$, and
        \item $g=[I \oplus Re]$ with $I$ an invertible ideal of  $\cO(\cl(M))(1-e)$, where $M$ is any module with $M_\m\cong M(\m)$ for all $\m \in \maxspec(R)$ and $e = \varepsilon(M)$.
    \end{enumerate}
    The first condition is equivalent to $\rank_{\fp_i} M(\m) = \rank_{\fp_i} M(\n)$ whenever $\fp_i \subseteq \m \cap \n$.
\end{proposition}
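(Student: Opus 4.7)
The plan is to prove injectivity directly from the Levy--Wiegand bijection, and to characterise the image by combining the Levy--Odenthal Package Deal Theorem with that same bijection. The three key inputs are Proposition~\ref{p:lw-genus} (identifying a genus with a Picard group), Lemma~\ref{l:unfaithful} (reducing the unfaithful case to the faithful one over $R(1-e)$), and the Levy--Odenthal Package Deal Theorem \cite[Theorem~2.9]{levy-odenthal96b}, which decides when a prescribed family of local lattices patches into an $R$-lattice.

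For injectivity, suppose $\varphi([M]) = \varphi([N])$. Since $M_\m \cong N_\m$ for every $\m$, the ranks $\rank_{\fp_i}(M)$ and $\rank_{\fp_i}(N)$ agree at every minimal prime, so $\varepsilon(M) = \varepsilon(N) =: e$, and $M$ and $N$ are faithful $R(1-e)$-lattices in a common genus. Lemma~\ref{l:unfaithful} converts $\cl_R(M) = \cl_R(N)$ into $\cl_{R(1-e)}(M) = \cl_{R(1-e)}(N)$ in $\Pic(\cO(\cl(M))(1-e))$, and Proposition~\ref{p:lw-genus} applied over the Bass ring $R(1-e)$ forces $M \cong N$.

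For the description of the image, necessity of~(1) is immediate from $M_\m \otimes_{R_\m} K = M \otimes_R K$ being independent of $\m$, and necessity of~(2) is the content of Lemma~\ref{l:unfaithful}. For sufficiency, starting from $(([M(\m)])_\m, g)$ satisfying (1) and (2), condition~(1) yields a well-defined generic rank at each minimal prime, and I would invoke the Package Deal Theorem to produce an $R$-lattice $M'$ realising the local data $M(\m)$. Writing $e = \varepsilon(M')$, Lemma~\ref{l:unfaithful} translates $g = [I \oplus Re]$ into an invertible ideal class in $\Pic(\cO(\cl_{R(1-e)}(M')))$; Proposition~\ref{p:lw-genus} then delivers a lattice $M$ in the genus of $M'$ with $\cl_{R(1-e)}(M) = [I]$, and by construction $\varphi([M]) = (([M(\m)])_\m, g)$.

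The equivalence between the two phrasings of condition~(1) follows from the decomposition $M(\m) \otimes_{R_\m} K = \bigoplus_i M(\m) \otimes_{R_\m} R_{\fp_i}$: any minimal prime $\fp_i \not\subseteq \m$ meets $R \setminus \m$, so its image in $R_\m$ is a unit annihilating the corresponding summand, and matching the $K$-module structures across different $\m$ reduces to matching ranks on the shared minimal primes. The main obstacle I anticipate is aligning the Package Deal Theorem with condition~(1) as stated: the result in \cite{levy-odenthal96b} is typically phrased via completions rather than localisations, and one has to verify (using that for a local Bass ring, completion preserves the lattice classes of a given generic rank, as exploited in the proof of Theorem~\ref{t:local-factorial}) that the two versions agree in this setting.
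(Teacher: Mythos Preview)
Your proposal is correct and follows essentially the same route as the paper: injectivity from Proposition~\ref{p:lw-genus} together with Lemma~\ref{l:unfaithful}, necessity of (1)--(2) from the same lemma, and sufficiency by first invoking the Package Deal Theorem to realise the genus and then adjusting the class within that genus via Proposition~\ref{p:lw-genus}.

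One small point of divergence worth flagging: your anticipated obstacle (completions versus localisations) is not the relevant one here---\cite[Theorem~2.9]{levy-odenthal96b} is already phrased in terms of localisations. The actual wrinkle in applying the Package Deal Theorem is that it modifies an \emph{existing} global lattice at finitely many maximal ideals, so you must first exhibit a base lattice with the correct generic ranks and then observe (via Lemma~\ref{l:finite-non-dvr}) that only finitely many local modifications are needed. The paper does this explicitly by taking $F = (Re_1)^{r_1} \oplus \cdots \oplus (Re_k)^{r_k}$; your sketch skips this step, but it is routine once noticed.
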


\begin{proof}
    We first show that $\varphi(T(R))$ is contained in the described set.
    If $M$ is an $R$-lattice then clearly $M_\m \otimes_{R_\m} K \cong M_\n \otimes_{R_\n} K$ for $\m$,~$\n \in \maxspec(R)$.
    Moreover, the faithful $R$-lattice $\cl(M)$ is invertible in $\cO(\cl(M))$ by \ref{tlw:bass:summand} of \cref{tlw:bass} and has the described form by \cref{l:unfaithful}.
    
    Suppose that $(M_\m)_\m$ is a family of $R_\m$-lattices satisfying the stated conditions.
    For $i \in [1,k]$ and a maximal ideal $\m$ with $\fp_i \subseteq \m$, let $r_i = \rank_{\fp_i} M_\m$.
    Because of the stated hypotheses, the value of $r_i$ does not depend on the choice of $\m$.
    Set $F = (Re_1)^{r_1} \oplus \cdots \oplus (Re_k)^{r_k}$.
    Then $F$ is an $R$-lattice with $\rank_{\fp_i} F = r_i$ for all $i \in [1,k]$.
    If $\m \in \maxspec(R)$ is such that $R_\m$ is a discrete valuation ring and $\fp_i$ is the unique minimal prime ideal contained in $\m$, then $F_\m \cong R_\m^{r_i} \cong M(\m)$.
    By \cref{l:finite-non-dvr}, the singular locus is finite.
    By Levy and Odenthal's Package Deal Theorem 2.9 of \cite{levy-odenthal96b}, there exists an $R$-lattice $N$ such that $N_\m \cong M(\m)$ for all $\m \in \maxspec(R)$.
   
    Letting $e = \varepsilon(N)$, the module $N$ is faithful over the Bass ring $R(1-e)$.
    By \cref{p:lw-genus}, the genus of $N$ is in bijection with $\pic(\cO(\cl_{R(1-e)}(N)))$ via the correspondence $N' \mapsto \cl_{R(1-e)}(N')$.
    The claim now follows using \cref{l:unfaithful} by choosing a suitable $R$-lattice $M$ in the genus of $N$.
\end{proof}

If $R_\m$ is a discrete valuation ring, then $M_\m$ is fully determined by its rank at the unique minimal prime ideal $\fp_i \subseteq \m$.
Thus, we may replace all but finitely many of the $T(R_\m)$ factors in the codomain of $\varphi$ by a rank vector.
This immediately yields the following description of $\varphi(T(R))$.

\begin{corollary} \label{c:tr-iso}
    The homomorphism
    \[
    \varphi(T(R)) \to \bN_0^k \times\mkern-15mu  \prod_{\m \in \Sing(R)}\mkern-15mu  T(R_\m) \times \pic(R\mid\overline R).
    \]
    given by
    \[
    \varphi([M]) = ( \rank_{\fp_1}(M), \ldots, \rank_{\fp_k}(M), ([M_\m])_{\m \in \Sing(R)}, \cl(M) ),
    \]
    is injective.
    Its image is the submonoid defined by 
    \begin{itemize}
        \item $\rank_{\fp_i}(M) = \rank_{\fp_i}(M_\m)$ for all $\m \in\maxspec(R)$ and $\fp_i \subseteq \m$, and
        \item $\cl(M) = [I \oplus Re] \in \pic(\cO(\cl(M)))$ with $e = \varepsilon(M)$, and $[I] \in \pic(\cO(\cl(M))(1-e))$. 
    \end{itemize}
\end{corollary}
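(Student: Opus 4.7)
The plan is to deduce this corollary directly from the preceding proposition by \emph{compressing} the $T(R_\m)$ factors for regular maximal ideals $\m$ into a single $\bN_0^k$ factor recording the minimal-prime ranks.

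First I would note that $\Sing(R)$ is finite by \cref{l:finite-non-dvr}, so the product over $\Sing(R)$ makes sense as a finite product. For each $\m \in \maxspec(R) \setminus \Sing(R)$, the localization $R_\m$ is a discrete valuation ring containing a unique minimal prime $\fp_i \subseteq \m$. Every $R_\m$-lattice is then free, so the isomorphism class $[M_\m] \in T(R_\m)$ is fully determined by $\rank_{R_\m}(M_\m) = \rank_{\fp_i}(M_\m) = \rank_{\fp_i}(M)$. Consequently, the entire collection $([M_\m])_{\m \notin \Sing(R)}$ is recoverable from the tuple $(\rank_{\fp_1}(M), \ldots, \rank_{\fp_k}(M))$, provided we also impose $\rank_{\fp_i}(M) = \rank_{\fp_i}(M_\m)$ for every pair $\fp_i \subseteq \m$; this is exactly the first bulleted condition in the statement.

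Next I would make the identification precise by constructing a map from $\bN_0^k \times \prod_{\m \in \Sing(R)} T(R_\m) \times \pic(R \mid \overline R)$ into the codomain of the preceding proposition's $\varphi$ that fills in the missing regular factors by $[R_\m^{r_i}]$, where $r_i$ is the rank at the unique minimal prime $\fp_i$ contained in $\m$. Under this identification the corollary's $\varphi$ carries the same image-defining constraints as the proposition's version: the compatibility condition $M(\m) \otimes_{R_\m} K \cong M(\n) \otimes_{R_\n} K$ reduces precisely to the first bullet, and the second bullet (on $\cl(M)$) is inherited verbatim from the proposition.

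Injectivity then follows: if $\varphi([M]) = \varphi([N])$ in the new codomain, then $M_\m \cong N_\m$ for every maximal ideal $\m$ (for regular $\m$ because both are $R_\m$-free of the same rank, and for singular $\m$ by direct reading of the tuple) and $\cl(M) = \cl(N)$, so by the injectivity already established in the preceding proposition we conclude $[M] = [N]$. There is no real obstacle here; the only point requiring any care is verifying that the compression neither adds nor loses relations, which is handled by the single observation that over a DVR the rank is a complete isomorphism invariant of lattices.
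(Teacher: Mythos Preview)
Your proposal is correct and follows essentially the same approach as the paper: the paper's entire argument is the one-sentence observation preceding the corollary that, for a regular maximal ideal $\m$, the discrete valuation ring $R_\m$ has a unique minimal prime $\fp_i \subseteq \m$ and $M_\m$ is determined by its rank there, so all but the finitely many singular factors may be replaced by the rank vector. You have simply spelled this out in more detail, including the explicit reconstruction of the regular factors and the verification that no relations are gained or lost; the paper regards the corollary as immediate and supplies no separate proof.
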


Dropping the $\pic(R\mid\overline R)$-factor, we can still obtain a transfer homomorphism. 
Unless $\Pic(R)$ is trivial (for instance, when $R$ is semilocal), we obviously lose some information. Nevertheless, this vantage point is still sufficient to study many factorization theoretical invariants, in particular sets of lengths.
The main advantage is that, since the monoids $T(R_\m)$ are factorial, the resulting monoid is a Diophantine monoid, and hence a Krull monoid.

\begin{theorem}\label{t:transhom}
    For a Bass ring $R$, the homomorphism
    \[
    \psi \colon T(R) \to \bN_0^k \times \prod_{\m \in \Sing(R)} T(R_\m)
    \]
    given by $\psi([M]) = (\rank_{\fp_1}(M), \ldots, \rank_{\fp_k}(M), ([M_\m])_{\m \in \Sing(R)})$ induces a transfer homomorphism $\psi\colon T(R) \to \psi(T(R))$.
    Its image is defined by $\rank_{\fp_i}(M) = \rank_{\fp_i}(M_\m)$ for all $\m \in \Sing(R)$ and $\fp_i \subseteq \m$.
    
    Moreover, the map $\psi$ is injective if and only if $\Pic(R)$ is trivial.
\end{theorem}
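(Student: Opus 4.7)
The plan is to read the image description off \cref{c:tr-iso} and then separately verify the two defining properties of a transfer homomorphism. Writing $\psi = \pi \circ \varphi$ with $\varphi$ the finer map of \cref{c:tr-iso} and $\pi$ the projection that forgets the class component in $\pic(R\mid\overline R)$, the image of $\psi$ equals $\pi(\varphi(T(R)))$. The constraint ``$\cl(M)=[I\oplus Re]$'' in \cref{c:tr-iso} restricts only the class itself, not the local data, so it disappears under $\pi$. The remaining rank condition is automatic at every $\m \notin \Sing(R)$, because there $R_\m$ is a DVR and $[M_\m] \in T(R_\m) \cong \bN_0$ is just $\rank_{\fp_i}(M)$ for the unique $\fp_i \subseteq \m$; this is precisely why the product in the codomain needs only run over $\Sing(R)$.

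For the transfer property, condition $\psi^{-1}(0) = 0$ is immediate, since $\psi([M])=0$ forces every $\rank_{\fp_i}(M)=0$ and hence $M=0$. For the lifting property, suppose $\psi([M]) = s + t$ in $\psi(T(R))$. For each $\m \in \Sing(R)$ the local monoid $T(R_\m)$ is factorial by \cref{t:local-factorial}, so the equation $[M_\m] = s_\m + t_\m$ produces a local decomposition $M_\m \cong N(\m) \oplus L(\m)$. Filling in the data at $\m \notin \Sing(R)$ by the unique free $R_\m$-modules of the prescribed ranks, Levy--Odenthal's Package Deal Theorem yields $R$-lattices $N_0$ and $L_0$ realizing these local modules, so that $M$ and $N_0 \oplus L_0$ lie in the same genus. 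By \cref{p:lw-genus} (applied over $R(1-e)$ via \cref{l:unfaithful} when unfaithfulness occurs), this genus is a torsor under a Picard group; combined with the multiplicative behavior of $\cl$ under $\oplus$ in the ideal class semigroup, one can replace $N_0$ by an $R$-lattice $N$ in its own genus so that $\cl(N \oplus L_0) = \cl(M)$. Then $M \cong N \oplus L_0$, and setting $L = L_0$ produces the required splitting $[M]=[N]+[L]$ with $\psi([N])=s$ and $\psi([L])=t$.

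Finally, injectivity of $\psi$ is equivalent to the statement that every genus contains a single isomorphism class of $R$-lattices, which by \cref{l:pic}\ref{pic:iso} is equivalent to $\Pic(R)$ being trivial. The main obstacle is the class-adjustment step in the previous paragraph: it depends on $\cl$ being multiplicative with respect to $\oplus$ and on knowing that varying $N$ through its genus sweeps out the full Picard group attached to $\cO(\cl(N))$. A clean alternative, which bypasses any explicit computation with $\cl$, is to work entirely inside $\varphi(T(R))$ via \cref{c:tr-iso}: given $x \in \varphi(T(R))$ with $\pi(x) = s+t$, pick any lifts $y, z \in \varphi(T(R))$ of $s, t$; then $y+z$ and $x$ agree in the first two components and differ only in the Picard coordinate by an element $d$, and $d$ may be absorbed into the class coordinate of $y$ because, by the description of $\varphi(T(R))$ in \cref{c:tr-iso}, that coordinate is free to vary over the entire ambient Picard group.
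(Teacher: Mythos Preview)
Your overall strategy matches the paper's: read off the image from \cref{c:tr-iso}, verify $\psi^{-1}(0)=\{0\}$, lift a splitting $\psi([M])=s+t$ by first producing lattices $N_0$,~$L_0$ in the right genera and then adjusting within genera to match the class of $M$, and finally handle injectivity via \cref{l:pic}. The gap is exactly where you flag it---the class-adjustment step---and neither your main argument nor your alternative actually closes it.

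The issue is that when $e'\coloneqq\varepsilon(N_0)\ne 0$, varying $N$ within the genus of $N_0$ only moves $\cl_R(N)$ through $\Pic\big(\cO(\cl(N_0))(1-e')\big)$; by \cref{l:unfaithful} the $e'$-component of $\cl_R(N)$ is pinned to $Re'$. Consequently $\cl(N\oplus L_0)e'=\cl(L_0)e'$ no matter which $N$ you choose, and there is no reason this equals $\cl(M)e'$ in $\Pic\big(\cO(\cl(M))e'\big)$. Your alternative has the same defect: the class coordinate of $y$ in \cref{c:tr-iso} is \emph{not} free to range over all of $\Pic(\cO(\cl(N_0)))$, only over its $(1-e')$-factor, so an arbitrary discrepancy $d$ cannot be absorbed into $y$ alone.

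The paper fixes this by adjusting \emph{both} summands. After reducing to $M$ faithful, it first chooses $N$ in the genus of $N'$ so that $\cl(M)$ and $\cl(N)\cl(L')$ agree on the $(1-e)$-component (where $e=\varepsilon(N')$), and then chooses $L$ in the genus of $L'$ so that $\cl(M)$ and $\cl(N)\cl(L)$ agree on the $(1-f)$-component (where $f=\varepsilon(L')$). Faithfulness of $M$ forces $ef=0$, and $\cl(L)f=Rf=\cl(L')f$ guarantees that the second adjustment does not disturb what the first one achieved on $(1-e)f$; the decomposition $1=(1-e)f+(1-f)$ then gives $\cl(N\oplus L)=\cl(M)$ in $\Pic(\cO(\cl(M)))$, hence $M\cong N\oplus L$. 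Your argument becomes correct once you make this two-sided adjustment and track the idempotents.
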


\begin{proof}
    From \cref{c:tr-iso} we know that $\psi$ is an epimorphism to the described monoid.
    If $\psi([M]) = 0$, then $M=0$.
    The only thing remaining to show is that, whenever $\psi([M]) = g + h$ with $g$,~$h \in \psi(T(R))$, then there exist $R$-lattices $N$,~$L$ with $\psi([N])=g$ and $\psi([L])=h$ such that $M \cong N \oplus L$. 
    Passing to $R(1-\varepsilon(M))$, we may assume that $M$ is faithful.
    Let $N'$ and $L'$ be $R$-lattices with $\psi([N']) = g$ and $\psi([L'])=h$.
    Then $M_\m \cong N'_\m \oplus L'_\m$ for all $\m \in \maxspec(R)$.
    
    The genus, the class, and the coefficient rings are determined locally.
    Moreover, by \cite[Proposition 3.4]{levy-wiegand85}, we have $\cl(A\oplus B)=\cl(A)\cl(B)$, and so the isomorphisms $M_\m \cong N'_\m \oplus L'_\m$ imply $\cO(\cl(N')) \subseteq \cO(\cl(M))$ and $\cO(\cl(L')) \subseteq \cO(\cl(M))$.

    Let $e = \varepsilon(N')$.
    Using \cref{p:lw-genus}, together with the epimorphism $\pic(\cO(\cl(N'))) \to \pic(\cO(\cl(M)))$, we can find $N$ in the genus of $N'$ with
    \[
        \cl(M)\cO(\cl(M))(1-e)= \cl(N)\cl(L')\cO(\cl(M))(1-e).
    \]
    With $f = \varepsilon(L)$, we can similarly choose $L$ in the genus of $L'$ with
    \[
        \cl(M)\cO(\cl(M))(1-f)=\cl(N)\cl(L)\cO(\cl(M))(1-f).
    \]
    Since $M$ is faithful, we must have $ef=0$.
    Moreover, $\cl(L)f=Rf=\cl(L')f$.
    Writing $1=(1-e)f + (1-f)$, the equalities above therefore imply
    \[
    \cl(M) \cO(\cl(M)) =  \cl(N) \cl(L) \cO(\cl(M)) = \cl(N \oplus L) \cO(\cl(M)).
    \]
    Hence $M \cong N \oplus L$ by \cref{p:lw-genus}.
    
    By \cref{l:pic} the Picard group $\Pic(R)$ is trivial if and only if the genus completely determines the isomorphism class of an $R$-lattice.
    Hence, in this case, the factor $\pic(R\mid\overline R)$ can be removed in \cref{c:tr-iso}.
    Conversely, if $\Pic(R)$ is non-trivial, then there exist two non-isomorphic $R$-lattices in the same genus and $\psi$ is not injective.
\end{proof}

Levy--Odenthal gave necessary and sufficient criteria for $R$ to satisfy [torsion-free] Krull--Remak--Schmidt--Azumaya.
Their results hold for semiprime, module-finite algebras over a commutative noetherian ring of Krull dimension $1$.
We recall the special case for Bass rings.

\begin{proposition}[{Levy--Odenthal \cite{levy-odenthal96a}}] \label{p:lo-factorial}
    Let $R$ be a Bass ring.
    The monoid $T(R)$ is factorial if and only if $\Pic(R)$ is trivial and every connected component of $\Spec(R)$ contains at most one singular maximal ideal.
\end{proposition}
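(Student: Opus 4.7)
The plan is to leverage the transfer homomorphism $\psi$ of \cref{t:transhom} together with the local description in \cref{p:indecomposables-local,t:local-factorial}; the argument splits naturally along the two conditions in the statement. First I would handle the Picard group condition: if $[I] \in \Pic(R)$ is nontrivial, then $R$ being one-dimensional Noetherian implies that every rank-two projective $R$-module splits off a free summand and is therefore isomorphic to $R \oplus L$ with $L$ its determinant line bundle, so $I \oplus I^{-1} \cong R \oplus R$ (since $I \otimes_R I^{-1} \cong R$). As $I$, $I^{-1}$, and $R$ are all indecomposable and $I \not\cong R$, this exhibits two distinct atomic factorizations in $T(R)$, so $\Pic(R) \ne 0$ precludes factoriality.

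Now assume $\Pic(R) = 0$, so $\psi$ is an isomorphism by \cref{t:transhom} and it suffices to determine when $\psi(T(R))$ is factorial. Decomposing $R$ into a direct product along the connected components of $\Spec(R)$ induces a corresponding product decomposition of $T(R)$, so I may assume $\Spec(R)$ is connected; the task becomes showing $\psi(T(R))$ is factorial if and only if $\card{\Sing(R)} \le 1$. For the ``if'' direction, either $\Sing(R) = \emptyset$---in which case $R$ is Dedekind with connected spectrum and trivial Picard, hence a PID with $T(R) = \bN_0$---or $\Sing(R) = \{\m\}$. In the second case, connectedness together with the fact that $R_\m$ has at most two minimal primes (\cref{p:indecomposables-local}) forces $R$ to have at most two minimal primes, all contained in $\m$, and the linear compatibility conditions of \cref{c:tr-iso} then collapse $\psi(T(R))$ onto the $T(R_\m)$ factor (the global rank coordinates are recovered from the local multiplicities at $\m$); this is factorial by \cref{t:local-factorial}.

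The bulk of the work lies in the converse, where $\Spec(R)$ is connected and $\Sing(R)$ contains two distinct ideals $\m, \n$. My strategy is to produce a non-unique factorization by selecting non-isomorphic indecomposable lattices $N^\m_1 \ne N^\m_2$ over $R_\m$ and $N^\n_1 \ne N^\n_2$ over $R_\n$ of matching local rank type (available because singularity forces $R_\m$ and $R_\n$ to admit multiple indecomposables by \cref{p:indecomposables-local}) and then invoking the Package Deal Theorem (as in the proof of \cref{t:transhom}) to produce four $R$-lattices $M_{ij}$ with local structure $(N^\m_i, N^\n_j)$ at $(\m, \n)$ and identical elsewhere. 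Each $M_{ij}$ is indecomposable because any nontrivial decomposition would restrict to one of the indecomposable local modules, and matching local data forces $M_{11} \oplus M_{22} \cong M_{12} \oplus M_{21}$. The main obstacle is the degenerate subcase where a two-prime local Bass ring admits only one indecomposable lattice of rank $(1,1)$, so that no pair $N^\m_1 \ne N^\m_2$ of rank $(1,1)$ exists; here I would exhibit the failure of factoriality directly in the Diophantine monoid $\psi(T(R))$ by combining atoms of ranks $(1,0)$ and $(0,1)$ at one of $\m, \n$ with multiple rank-$(1,1)$ indecomposables at the other, which produces a rank-$(1,1)$ element with two atomic factorizations of distinct lengths.
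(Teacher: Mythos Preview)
Your approach differs from the paper's: the paper simply invokes \cite[Theorem~1.3]{levy-odenthal96a}, checking that its three conditions specialize to the two stated here (the local condition~(c) being automatic by \cref{t:local-factorial}). Your direct argument via $\psi$ is closer in spirit to the Remark following the proposition, and your treatment of the $\Pic(R)$ condition and of the sufficiency direction is fine.

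The gap is in your ``only if'' direction, specifically the degenerate subcase. Your main swap argument requires, at \emph{each} of $\m$ and $\n$, two non-isomorphic indecomposables of the \emph{same} rank. When $R_\m$ is a domain this is available ($R_\m$ and $\overline{R_\m}$, both rank~$1$), but when $R_\m$ has two minimal primes it can fail: for instance $R_\m \cong k[[x,y]]/(xy)$ has exactly one indecomposable of rank $(1,1)$, namely $R_\m$ itself, since $\overline{R_\m}\cong k[[x]]\times k[[y]]$ is decomposable. Your proposed fix then assumes ``multiple rank-$(1,1)$ indecomposables at the other'' ideal, but \emph{both} $\m$ and $\n$ can be of this minimal type simultaneously---take two irreducible curves meeting transversally at two nodes and localize at the nodes, so that $\cG_R\cong\cB_2$. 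In that situation neither your main argument nor your stated fallback applies. (Also, \cref{p:indecomposables-local} does not assert the existence of multiple indecomposables; that comes from $R_\m\ne\overline{R_\m}$.)

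The repair is not hard once identified: in the doubly degenerate case $\psi(T(R))$ has no duplicate columns and is already isomorphic to $\agg(\cG_R)$, which has at least two edges, and one can write down an explicit relation such as $\ind_{(r(e_1),e_1)}+\ind_{(r(e_2),e_2)}=\ind_{\cG'}+\ind_{v}+\cdots$ (for a suitable connected two-edge subgraph $\cG'$) to exhibit non-uniqueness; this is exactly what \cref{t:factorial} formalizes.
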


\begin{proof}
    Decomposing $R$ as a product of subrings, we may assume that $\Spec(R)$ is connected in the Zariski topology, that is, that $R$ is indecomposable as a ring.
    \cite[Theorem 1.3]{levy-odenthal96a} gives three conditions that are, in conjunction, equivalent to $T(R)$ being factorial.
    Condition (a) is that the genus of every $R$-lattice contains a single isomorphism class; this is equivalent to the triviality of $\Pic(R)$;
    (b) is that there is at most one non-singular maximal ideal.
    The final condition (c) is a local condition.
    It is satisfied for Bass rings because $T(R_\fm)$ is factorial by \cref{t:local-factorial}.
\end{proof}

\begin{remark}
    The previous result can also be proved without making use of the---much more general---theorem of Levy--Odenthal (but still by using their Package Deal Theorem).
    We sketch the argument, omitting details.
    First the necessity. 
    It is easy to see that $\Pic(R)$ being trivial is necessary.
    Then the transfer homomorphism $\psi$ from \cref{t:transhom} induces an isomorphism $T(R) \to \psi(T(R))$ and it suffices to study $\psi(T(R))$.
    With a bit of work, from the connectedness of $\Spec(R)$, one can deduce that there is at most one maximal ideal containing more than one minimal prime ideal.
    (In fact, this follows from the existence of the transfer homomorphism, together with \cref{t:factorial,p:transfer-bass-agg} below.)
    Each singular $R_\fm$ has at least two non-isomorphic indecomposable $R$-lattices of constant rank $1$: namely $R_\fm$ and $\overline{R_\fm}$.
    From this, it is easy to see $\card{\Sing(R)} \le 1$.
    This proves the necessity.
    
    For the sufficiency, it is again easy to see that if $\card{\Sing(R)}=1$, then $\psi(T(R))$ is factorial (recall that $T(R_\fm)\cong \bN_0^t$ by \cref{t:local-factorial}).
\end{remark}

\subsection{Construction of \texorpdfstring{$\psi(T(R))$}{psi(T(R))} as a Diophantine monoid} \label{subsec:diophantine}

In this section we further flesh out the structure of the monoid $T(R)$ when $R$ is a Bass ring. More specifically, we completely describe the image $\psi(T(R))$ under the transfer homomorphism given in \cref{t:transhom}, showing that $T(R)$ is transfer Krull. Then we state arithmetical results about $T(R)$ that are proved later in \cref{s:tie-in}.

We will describe $\psi(T(R))$ as a Diophantine monoid, and will make use of the following basic lemma showing that duplicate columns can essentially be ignored when considering a Diophantine monoid. That is, there is a natural transfer homomorphism between two Diophantine monoids; one with all duplicate columns removed. For simplicity, we state the lemma when the second column is a duplicate of the first. Induction and a permutation of columns yields the more general result.

\begin{lemma}\label{l:duplicate columns}
Let $B=\begin{bmatrix} \mathbf{a}_1 & \cdots & \mathbf{a}_n\end{bmatrix}$ be an $m\times n$ matrix with integer entries such that $\mathbf{a}_1=\mathbf{a}_2$ and take $H=\ker(B)\cap \mathbb N_0^n$. Set $B'=\begin{bmatrix} \mathbf{a}_1 & \mathbf{a}_3 & \cdots & \mathbf{a}_n\end{bmatrix}$ and $H'=\ker(B')\cap \mathbb N_0^{n-1}$. Then there is a transfer homomorphism $\theta: H\rightarrow H'$.
\end{lemma}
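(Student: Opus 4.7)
The natural candidate is the surjective monoid homomorphism $\theta\colon H \to H'$ defined by
\[
  \theta(x_1, x_2, x_3, \ldots, x_n) = (x_1 + x_2, x_3, \ldots, x_n).
\]
Because $\mathbf{a}_1 = \mathbf{a}_2$, for any $x \in \bN_0^n$ one has
\[
  B'\theta(x) = (x_1+x_2)\mathbf{a}_1 + \sum_{i\ge 3} x_i \mathbf{a}_i = \sum_{i=1}^{n} x_i \mathbf{a}_i = Bx,
\]
so $x \in H$ if and only if $\theta(x) \in H'$. This shows both that $\theta$ is well-defined and that it is surjective, as any $(y_1, y_3, \ldots, y_n) \in H'$ lifts to $(y_1, 0, y_3, \ldots, y_n) \in H$. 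Additivity is evident. The first transfer axiom, $\theta^{-1}(0) = \{0\}$, is immediate from non-negativity of the entries.

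The only substantive step is the lifting axiom. Given $a \in H$ with $\theta(a) = s + t$ for $s = (s_1, s_3, \ldots, s_n)$ and $t = (t_1, t_3, \ldots, t_n)$ in $H'$, I must produce $b, c \in H$ with $a = b+c$, $\theta(b) = s$, and $\theta(c) = t$. For $i \ge 3$, I set $b_i = s_i$ and $c_i = t_i$, which is forced by $a_i = s_i + t_i$. For the first two coordinates I need non-negative integers $b_1, b_2, c_1, c_2$ satisfying
\[
  b_1 + c_1 = a_1, \qquad b_2 + c_2 = a_2, \qquad b_1 + b_2 = s_1, \qquad c_1 + c_2 = t_1.
\]
This is a $2 \times 2$ transportation problem whose marginals are compatible by $a_1 + a_2 = s_1 + t_1$ (read off $\theta(a) = s+t$). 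An explicit feasible solution is $c_1 = \max\{0,\, a_1 - s_1\}$, $b_1 = a_1 - c_1$, $b_2 = s_1 - b_1$, $c_2 = a_2 - b_2$; non-negativity of all four values is a short case check. Finally, applying the identity $B'\theta(\,\cdot\,) = B(\,\cdot\,)$ to $b$ and $c$ shows that $b, c \in H$ since $\theta(b) = s$ and $\theta(c) = t$ lie in $H'$.

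I do not anticipate any real obstacle; the argument is entirely elementary, and the one place where care is needed is verifying the existence of a non-negative integer splitting in the lifting step, which is standard.
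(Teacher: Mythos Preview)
Your proof is correct and follows essentially the same approach as the paper: the same map $\theta$, the same surjectivity check via the lift $(y_1,0,y_3,\ldots,y_n)$, and the same elementary $2\times 2$ splitting for the lifting axiom. The only cosmetic difference is that the paper resolves the splitting by a symmetry argument (without loss of generality $s_1\le a_1$, then $b=(s_1,0,s_3,\ldots)$ and $c=(a_1-s_1,a_2,t_3,\ldots)$), whereas you write down a single formula using $\max$ that handles both cases at once; your use of the identity $B'\theta(\cdot)=B(\cdot)$ to confirm $b,c\in H$ is a clean touch the paper leaves implicit.
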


\begin{proof}
Define $\theta: H\rightarrow H'$ by $$\theta (x_1, x_2, x_3, \ldots, x_n)=(x_1+x_2, x_3, \ldots, x_n).$$ If $(y_1, y_2, \ldots, y_{n-1})\in H'$, then $y_1\mathbf{a}_1+\sum_{i=2}^{n-1}y_i\mathbf{a}_i=\mathbf{0}$, and so $(y_1, 0, y_3, \ldots, y_n)\in H$, whence $\theta$ is surjective. Suppose $\theta(\mathbf{x})=\mathbf{y}+\mathbf{z}$. Then $x_1+x_2=y_1+z_1$ and $y_{i-1}+z_{i-1}=x_i$ for all $i\in [3,n]$. Since either $x_1\leq y_1$ or $x_2\leq z_1$, we assume without loss of generality that $y_1\leq x_1$. Set $\mathbf{w}=(x_1-y_1, x_2, z_2, \ldots, z_{n-1})$ and $\mathbf{v}=(y_1, 0, y_2, y_3, \ldots, y_{n-1})$. Then $\theta(\mathbf{v})=\mathbf{y}$, $\theta(\mathbf{w})=\mathbf{z}$, and $\mathbf{x}=\mathbf{v}+\mathbf{w}.$ Thus $\theta$ is a transfer homomorphism.
\end{proof}

The transfer homomorphism
\[
\psi \colon T(R) \to \bN_0^k \times \prod_{\m \in \Sing(R)} T(R_\m)
\]
of \cref{t:transhom} is given by $\psi([M]) = (\rank_{\fp_1}(M), \ldots, \rank_{\fp_k}(M), ([M_\m])_{\m \in \Sing(R)})$ and has image defined by $\rank_{\fp_i}(M) = \rank_{\fp_i}(M_\m)$ for all $\m \in \Sing(R)$ and $\fp_i \subseteq \m$. Since each $T(R_{\m})$ is free (\cref{t:local-factorial}), the codomain of $\psi$ is free. We now describe its image as a Diophantine monoid having defining matrix with columns indexed by $\minspec{R}$ and by the isomorphism classes of indecomposable lattices over the localizations of $R$.

Let $\Sing(R)=\{\m_1, \ldots, \m_{b_1}, \ldots, \m_b\}$ with $b=b_1+b_2$ so that $\m_1$, $\ldots\,$,~$\m_{b_1}$ are the maximal ideals in $\Sing(R)$ containing exactly one minimal prime ideal and $\m_{b_1+1}$,~$\ldots\,$,~$\m_b$ are the maximal ideals  in $\Sing(R)$ containing exactly two minimal prime ideals. For each $i\in [1,b]$, fix $A_{i1}, \ldots, A_{it_i}$ representatives of the finitely many indecomposable $R_{\m_i}$-lattices and for an $R$-lattice $M$, write $M_{\m_i}\cong A_{i1}^{e_{i1}}\oplus \cdots \oplus A_{it_i}^{e_{it_i}}.$
If $R_{\m_i}$ is a domain containing only the minimal prime $\fp$, then $\sum_{j=1}^{t_i}e_{ij}=\rank_\fp(M)$. If $R_{\m_i}$ has two minimal primes $\fp R_{\m_i}$ and $\fq R_{\m_i}$, then assume $\rank(A_{i1})=(1,0)$, $\rank(A_{i2})=(0,1)$, and $\rank(A_{ij})=(1,1)$ for all $j\in [3,t_i]$. Then $\rank_\fp(M)=\left(\sum_{j=1}^{t_i} e_{ij}\right)-e_{i2}$ and $\rank_\fq(M)=\left(\sum_{j=1}^{t_i} e_{ij}\right)-e_{i1}$. 

We now define a $(b_1+2b_2)\times (k+\sum_{i=1}^bt_i)$ matrix  $B$. The first $k$ columns record which minimal primes of $R$ are contained in which maximal ideals. If $1\leq i\leq b_1$ and $1\leq j\leq k$, the $ij$th entry of $B$ is $1$ if $\fp_i\in \m_j$ and $0$ otherwise. If $b_1<i\leq b$, there are two rows of $B$ associated to $\m_i$; $\fp_u$ and $\fp_v$ are distinct minimal primes contained in $\m_i$ and so we place a $1$ in the $b_1+2(i-b_1)$st row and $u$th column and in the $b_1+2(i-b_1)-1$st row and $v$th column. The other columns record the ranks of the indecomposable modules over the various localizations of $R$. If $A_{ij}$ is an indecomposable $\m_i$-lattice with $1\leq i\leq b_1$, then there is a $-1$ in the $i$th row and $A_{ij}$th column. If $b_1< i\leq b$, then the $b_1+2(i-b_1)-1$st row, restricted to columns indexed by $A_{i1}, \ldots, A_{it_i}$ is $\begin{bmatrix} -1 & 0 & -1 & \cdots & -1\end{bmatrix}$ and the $b_1+2(i-b_1)$st row, restricted to columns indexed by $A_{i1}, \ldots, A_{it_i}$ is $\begin{bmatrix} 0 & -1 & -1 & \cdots & -1\end{bmatrix}$.

Observe that the entries of the first $k$ columns of $B$ are all in $\{0,1\}$ while the entries of the remaining $n - k$ columns are all in $\{0,-1\}$. With this setup, and with the description of $\psi(T(R))$ given in \cref{t:transhom}, we have the following result.

\begin{proposition} \label{p:transfer-diophantine}
Let $R$ be a Bass ring and let $B$ be as defined above. The map $\psi$ given in \cref{t:transhom} gives a transfer homomorphism from $T(R)$ to $\ker(B)\cap \bN_0^n$. In particular, $T(R)$ is transfer Krull.
\end{proposition}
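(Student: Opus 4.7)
My plan is to leverage \cref{t:transhom} directly: it already furnishes the transfer homomorphism $\psi\colon T(R)\to\psi(T(R))$ and pins down $\psi(T(R))$ by the local-constancy conditions on ranks. So the real content of the proposition is just to recognize $\psi(T(R))$, after the free identifications of the factors $T(R_{\m_i})$, as exactly the Diophantine monoid $\ker(B)\cap\bN_0^n$.

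First I would identify the codomain concretely. By \cref{t:local-factorial}, for each $\m_i\in\Sing(R)$ we have $T(R_{\m_i})\cong\bN_0^{t_i}$, with the standard basis indexed by the isomorphism classes of the indecomposables $A_{i1},\ldots,A_{it_i}$; writing $M_{\m_i}\cong A_{i1}^{e_{i1}}\oplus\cdots\oplus A_{it_i}^{e_{it_i}}$, the class $[M_{\m_i}]$ corresponds to the tuple $(e_{i1},\ldots,e_{it_i})$. After gluing these together with the rank coordinates $\rank_{\fp_1}(M),\ldots,\rank_{\fp_k}(M)$ in front, the codomain becomes $\bN_0^n$ with $n=k+\sum_{i=1}^bt_i$, and $\psi$ is the map sending $[M]$ to this tuple.

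Next I would translate the defining condition of $\psi(T(R))$ --- namely $\rank_{\fp_j}(M)=\rank_{\fp_j}(M_{\m})$ for every $\m\in\Sing(R)$ and every $\fp_j\subseteq\m$ --- into linear equations on these coordinates, and match them row by row with $B$. For $i\in[1,b_1]$, the ring $R_{\m_i}$ is a domain with unique minimal prime $\fp\subseteq\m_i$, and since every $A_{il}$ is an $R_{\m_i}$-lattice of $\fp$-rank $1$, the local condition reads $\rank_\fp(M)-\sum_{l=1}^{t_i}e_{il}=0$; this is precisely row $i$ of $B$, which places a $+1$ in the column of $\fp$ and $-1$'s in the columns of the $A_{il}$. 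For $i\in[b_1+1,b]$, \cref{p:indecomposables-local} gives two minimal primes $\fp_u,\fp_v\subseteq\m_i$ with $\rank(A_{i1})=(1,0)$, $\rank(A_{i2})=(0,1)$ and $\rank(A_{ij})=(1,1)$ for $j\ge 3$, so the two local conditions become $\rank_{\fp_v}(M)=e_{i1}+\sum_{j\ge 3}e_{ij}$ and $\rank_{\fp_u}(M)=e_{i2}+\sum_{j\ge 3}e_{ij}$, which match the two rows of $B$ associated to $\m_i$. Finally, at a non-singular (hence DVR) maximal ideal $\m$ with unique underlying minimal prime $\fp_j$, the class $[M_{\m}]$ is determined by $\rank_{\fp_j}(M)$, so no further constraint is needed; this is why $B$ has no rows for such $\m$.

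Putting this together, $\psi(T(R))$ is exactly $\ker(B)\cap\bN_0^n$, which is by definition a Diophantine monoid, and hence a finitely generated reduced Krull monoid by the characterization recalled in \cref{s:background}. Since $\psi$ is a transfer homomorphism by \cref{t:transhom} and its image has now been identified with a finitely generated Krull monoid, $T(R)$ is transfer Krull of finite type. The step requiring the most care is the bookkeeping in the case $i\in[b_1+1,b]$, since one has to check that the sign conventions of the rows of $B$ (which $A_{il}$ gets a $0$ versus a $-1$ in which of the two rows) correctly reflect which of $\fp_u,\fp_v$ is the ``first'' coordinate under the conventions used to label the ranks of $A_{i1},A_{i2}$; everything else is a mechanical unpacking of definitions.
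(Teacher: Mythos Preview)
Your proposal is correct and is essentially the approach the paper takes: the paper does not give an explicit proof of this proposition at all, but rather states it as an immediate consequence of the preceding construction of $B$ together with \cref{t:transhom}, and your argument spells out precisely that identification. Your careful row-by-row matching (including the bookkeeping for the two-prime maximal ideals) is exactly the unpacking the paper leaves implicit.
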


\begin{remark}\label{rem:noncancellative}
    Although $\psi(T(R))$ is always Krull when $R$ is a Bass ring, the monoid $T(R)$ itself is rarely Krull since it is not, in general, cancellative. In fact, by \cite[Theorem 2.7]{wiegand-84}, the monoid $T(R)$ is cancellative if and only if the natural map $\pic(R)\rightarrow \pic(\overline R)$ is injective, and this rarely happens. However, as is observed in \cite[Theorem 6.2]{levy-wiegand85}, an isomorphism $M\oplus X\cong N\oplus X$ implies $M\cong N$ whenever $M$, $N$ and $X$ are lattices with $X$ projective. In fact, the monoid $T(R)$ is unit-cancellative since we consider only finitely generated modules over a commutative ring.
\end{remark}

\begin{example} \label{exm:bass-domain}
If $R$ is a domain, by \cite{levy-odenthal96a}, we already know that $T(R)$ is half-factorial. However, we explicitly build $B$ to illustrate the construction. Every maximal ideal of $R$ contains the unique minimal ideal of $R$ and thus the only positive entries of $B$ are in the first column, consisting of all $1$s. Moreover, for each $\m\in \maxspec(R)$, we have $\rank_{R_\m}(A)=1$ for each each indecomposable $R_\m$-lattice $A$. Thus \[B=\begin{bmatrix} 1 & -1 & \cdots & -1 & 0 & \cdots & 0 & \cdots & 0 & \cdots & 0 \\ 1 & 0 & \cdots & 0 & -1 & \cdots & -1 & \cdots & 0 & \cdots & 0 \\ &&\vdots&  &&&& \\ 1 & 0 & \cdots & 0 & 0 & \cdots & 0 & \cdots & -1 & \cdots & -1 \\ \end{bmatrix}.\] From here it is obvious that $\psi(T(R))\cong \ker(B)\cap \mathbb N_0^n$ is factorial.
\end{example}

\begin{example} \label{exm:bass-ngon}
Fix $m\geq 3$ and for each $i\in [1,m]$, set $c_i=\cos(2\pi i/m)$, $s_i=\sin(2\pi i/m)$, and $t_i=\frac{s_{i+1}-s_i}{c_{i+1}-c_i}$. Then the lines $y-s_i=t_i(x-c_i)$ give $m$ lines in the real plane containing the $m$ edges of a regular $m$-gon. Set \[ R=\mathbb R[x,y]/(\{\,y-s_i=t_i(x-c_i) : i\in [1,m]\,\}. \] With $\m_i=(x-c_i, y-s_i)$ for each $i\in [1,m]$, set $U=R\backslash \bigcup_{i=1}^m\m_i$. Then $R_U$ is a semilocal Bass ring with $m$ minimal prime ideals corresponding to the $m$ edges of the $m$-gon and $m$ maximal ideals corresponding to the $m$ vertices. Moreover, each prime ideal is contained in exactly two maximal ideals and each maximal ideal contains exactly two minimal prime ideals. Consequently (after removing duplicate columns), \[B=\begin{bmatrix}1 & 0 & 0 & 0 & \cdots & 0 & -1 & 0 & -1 & 0 & 0 & 0 & \cdots & 0 & 0 & 0 \\ 0 & 1 & 0 & 0 & \cdots & 0 & 0 & -1 & -1 & 0 & 0 & 0 & \cdots & 0 & 0 & 0 \\ 0 & 1 & 0 & 0 & \cdots & 0 & 0 & 0 & 0 & -1 & 0 & -1 & \cdots & 0 & 0 & 0 \\ 0 & 0 & 1 & 0 & \cdots & 0 & 0 & 0 & 0 & 0 & -1 & -1 & \cdots & 0 & 0 & 0 \\ & & & & \vdots & & & & & & & & \vdots & & & \\ 0 & 0 & 0 & 0 & \cdots & 1 & 0 & 0 & 0 & 0 & 0 & 0 & \cdots & -1 & 0 & -1  \\ 1 & 0 & 0 & 0 & \cdots & 0 & 0 & 0 & 0 & 0 & 0 & 0 & \cdots & 0 & -1 & -1 \end{bmatrix}.\] 
\end{example}

\begin{remark}\leavevmode
\begin{enumerate}
\item By construction, columns corresponding to distinct maximal ideals of $R$ cannot be the same. Also, no two of the first $k$ columns can be the same. However, for each maximal ideal $\m$ of $R$ there can be some duplicate columns, corresponding to multiple nonisomorphic indecomposables with the same rank; $1$ in the domain case and $(1,1)$ in the non-domain case. \cref{l:duplicate columns} allows us to consider the Diophantine monoid with these duplicate columns removed.

\item Set $H=\ker(B)\cap \mathbb N_0^n$. If $\mathbf e_i$ is the $i$-th standard unit vector, then $-\mathbf e_i$ appears as a column. Consequently, the inclusion $H\subseteq \mathbb N_0^n$ is a cofinal divisor homomorphism with class group $\im(B)\cong \mathbb Z^{b_1+2b_2}$ and with set of prime divisors given by the distinct columns of $B$. 
Thus, the monoid $H$ satisfies \cite[Proposition 6.2]{baeth-geroldinger14}, giving finiteness results for several arithmetical invariants. 

While the inclusion $H\subseteq \mathbb N_0^n$ is a cofinal divisor homomorphism, it is not a divisor theory.
A divisor theory can be obtained by eliminating the first $k$ variables, corresponding to the minimal prime ideals.
This is carried out in detail in \cref{t:divtheory} below for monoids of graph agglomerations.
These monoids of graph agglomerations are in fact precisely monoids of the form $\ker(C) \cap \bN_0^{n}$ with $C$ arising from $B$ as above by de-duplicating columns.
\end{enumerate}
\end{remark}

The transfer homomorphism from \cref{p:transfer-diophantine} can be simplified using \cref{l:duplicate columns}, giving rise to the following result that we will prove in \cref{s:tie-in}.

\begin{proposition} \label{p:transfer-deduplicated}
   Let $R$ be a Bass ring.
   Let $E \subseteq \maxspec(R)$ denote the set of maximal ideals of $R$ that contain two minimal prime ideals, and let $V\coloneqq \minspec(R)$.
   Let $c\coloneqq \card{V}+3\card{E}$, and let the coordinates of $\bN_0^c$ be indexed as follows: the first $\card{V}$ coordinates are indexed by elements of $V$.
   For each $\fm \in E$ there are three further coordinates, indexed by $\fm_{\fp}$, by $\fm_{\fq}$, and by $\fm$, where $\fp$, $\fq$ are the minimal prime ideals contained in $\fm$. 
   Let 
   \[
    H = \{\, \vec x \in \bN_0^c : x_\fp = x_\fm + x_{\fm_{\fp}} \text{ for all $(\fm,\fp) \in E \times V$ with $\fp \subseteq \fm$ } \,\}.
   \]
   Then there exists a transfer homomorphism $\varphi\colon T(R) \to H$.
\end{proposition}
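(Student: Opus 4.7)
The plan is to start from the transfer homomorphism $\psi\colon T(R)\to \ker(B)\cap\bN_0^n$ of \cref{t:transhom,p:transfer-diophantine}, apply \cref{l:duplicate columns} selectively to collapse the columns of $B$ that correspond to distinct isomorphism classes of indecomposable $R_\fm$-lattices with a common rank vector, and finally project out coordinates that become uniquely determined by the defining equations. Since compositions of transfer homomorphisms and monoid isomorphisms are again transfer homomorphisms, this will give the desired $\varphi\colon T(R)\to H$.

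The first step is to identify the groups of duplicate columns I wish to merge. By \cref{p:indecomposables-local}, for each singular $\fm \in \Sing(R)\setminus E$ with unique minimal prime $\fp\subseteq\fm$, every indecomposable $R_\fm$-lattice has rank $1$ at $\fp$, so all columns of $B$ associated to $\fm$ coincide with the standard unit vector $-\vec e$ in the row of $B$ associated to $\fm$. For $\fm\in E$ with minimal primes $\fp,\fq$, \cref{p:indecomposables-local} furnishes a unique indecomposable of rank $(1,0)$, a unique one of rank $(0,1)$, and possibly several of rank $(1,1)$; the last group contributes equal columns to $B$ (each having a $-1$ in both rows associated to $\fm$), while the first two contribute pairwise distinct columns. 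Iterated application of \cref{l:duplicate columns} (together with column permutations), restricted to exactly these duplicate classes, yields a transfer homomorphism $\theta$ from $\ker(B)\cap\bN_0^n$ onto a Diophantine submonoid of some $\bN_0^{n'}$ whose coordinates are indexed by $V$, by a single symbol $\fm$ for each $\fm\in\Sing(R)\setminus E$, and, for each $\fm\in E$ with minimal primes $\fp,\fq$, by three symbols $\fm_\fp$, $\fm_\fq$, $\fm$.

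Reading off the defining equations of this image from the rows of the collapsed matrix, each $\fm\in\Sing(R)\setminus E$ with unique minimal prime $\fp$ contributes the single equation $x_\fp=x_\fm$, and each $\fm\in E$ with minimal primes $\fp,\fq$ contributes the two equations $x_\fp=x_{\fm_\fp}+x_\fm$ and $x_\fq=x_{\fm_\fq}+x_\fm$. The equations of the first kind make each $x_\fm$ with $\fm\in\Sing(R)\setminus E$ redundant, so projecting these coordinates out is a monoid isomorphism onto the monoid $H$ of the statement. The composite $\varphi\coloneqq(\text{projection})\circ\theta\circ\psi$ is then the required transfer homomorphism. The main bookkeeping step is to verify that the selected column-merges produce precisely the equations $x_\fp=x_{\fm_\fp}+x_\fm$ and $x_\fq=x_{\fm_\fq}+x_\fm$ for $\fm\in E$; this reduces to carefully tracking how the two rows of $B$ associated to $\fm$ place their $-1$ entries on the columns of the rank $(1,0)$, $(0,1)$, and $(1,1)$ indecomposables, which is a direct consequence of the matrix construction preceding \cref{p:transfer-diophantine} and of \cref{p:indecomposables-local}.
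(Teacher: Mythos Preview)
Your proposal is correct and follows essentially the same route as the paper: start from the transfer homomorphism of \cref{p:transfer-diophantine}, use \cref{l:duplicate columns} to merge the duplicate columns coming from indecomposables with the same rank vector, and then project out the redundant coordinates associated to singular $\fm\notin E$. Your ordering---first de-duplicate all repeated columns, then observe that the resulting equation $x_\fp=x_\fm$ for $\fm\in\Sing(R)\setminus E$ makes the projection an isomorphism---is in fact slightly cleaner than the paper's presentation, which removes those columns and rows in one step with a terser justification.
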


\begin{proof}
   Let $C$ be the $2\card{E} \times c$ integer matrix, defined as follows:
   the rows are indexed by by pairs $(\fm,\fp) \in E \times V$ with $\fp \subseteq \fm$.
   The columns are indexed analogously to the coordinates of $\bN_0^c$.
   Denoting by $\vec e_i$ the $\card{V}+3\card{E}$-dimensional standard row vector with $1$ in coordinate $i$ and $0$ everywhere else, the row $(\fm,\fp)$ of $C$ is defined to be equal to $\vec e_{\fp} -  \vec e_{\fm} - \vec e_{\fm_{\fp}}$.
   Thus $H = \ker(C) \cap \bN_0^c$.
   
   We now note that $C$ arises from the matrix $B$ of \cref{p:transfer-diophantine} as follows:
   \begin{itemize}
       \item for each maximal ideal containing a unique minimal prime ideal, the corresponding columns are erased.
       \item If $\fm_i$ is one of the maximal ideals containing two minimal prime ideals ($b_1 < i \le b$), the columns corresponding to the rank $(1,1)$ indecomposables $A_{i,3}$, $\ldots\,$,~$A_{i,t_i}$, which are duplicates of each other, are replaced by a single copy, which becomes the column indexed by $\fm$ in $C$.
       The columns corresponding to the rank $(1,0)$ and $(0,1)$ indecomposables remain unchanged; they are the columns indexed by $\fm_\fp$ and $\fm_\fq$.
   \end{itemize}
   The first of these modifications yields an isomorphic Diophantine monoid, because each removed column has a single nonzero entry, which is $-1$, and therefore the removal is simply eliminating a redundant variable.
   The second modification preserves the transfer homomorphism by \cref{l:duplicate columns}.
\end{proof}
 
As is clear from the description of $\psi(T(R))$ as a Diophantine monoid, the arithmetic of $T(R)$ is governed by the structure of $\Spec(R)$. We will make this more precise in \cref{s:tie-in} after a new object is introduced in \cref{s:agglomerations}, but we already state the main results as they apply to $T(R)$. 
The description uses the prime ideal intersection graph (\cref{d:prime-ideal-intersection-graph}) alluded to in the introduction.

\begin{theorem} \label{t:main-finiteness}
Let $R$ be a nonzero Bass ring and let $\cG_R$ be the graph of prime ideal intersections of $R$.
\begin{enumerate}
\item\label{mf:elasticities-general} 
The elasticity $\rho(T(R)) \in \bQ$ is finite and accepted, and
\[
\{\, \rho([M]) : [M] \in T(R) \,\} = \{\, q \in \bQ : 1 \le q \le \rho(T(R)) \,\}.
\]
\item\label{mf:elasticities} If $m$ is the maximal order of a connected component in $\cG_R$ and $D$ is the maximal degree of a vertex in $\cG_R$, then \[\rho(T(R))\leq m-\frac{m-1}{D}.\]
If $\cG$ is simple, then $\rho(T(R)) \le m - 2 + \tfrac{2}{m}$, with equality when $\cG$ is a complete graph.
\item\label{mf:refined-elasticities} For each $k$, we have $\rho_k(T(R)) \leq (k-1)|V|+1$.
Equality holds if and only if $\cG_R$ contains $k$ edge-disjoint spanning trees \textup{(}that is, the spanning tree packing number of the graph $\cG_R$ is at least $k$\textup{)}.
\item\label{mf:half-factorial} The monoid $T(R)$ is half-factorial if and only if $\cG_R$ is acyclic.
\item\label{mf:distances} The set of distances $\Delta(T(R))$ is finite.
\item\label{mf:uk} The monoid $T(R)$ satisfies the Strong Structure Theorem for Unions of Sets of Lengths.
In particular, there is $M \in \bN_0$ such that, for all $k \in \bN$, the unions $\mathcal U_k(T(R))$ are finite AAPs with difference $\min \Delta(T(R))$ and bound $M$.
\item\label{mf:lengths} The monoid $T(R)$ satisfies the Structure Theorem for Sets of Lengths: there is an $M \in \bN_0$ such that every $L \in \mathcal L(T(R))$ is an AAMP with difference $d \in \Delta(T(R))$ and bound $M$.
\item\label{mf:cat-omega} 
If $\Pic(T(R))$ is trivial \textup{(}for instance, when $R$ is semilocal\textup{)}, then $T(R)$ is a finitely generated Krull monoid.
We have $\omega(T(R)) < \infty$. The set of catenary degrees $\{\, \mathsf c([M]) : [M] \in T(R) \,\}$ is finite. In particular $\mathsf c(T(R)) < \infty$.
\item\label{mf:existence} For each $N\in \mathbb N$, there exists a Bass ring $R$ such that $\rho(T(R))\geq N$.
\end{enumerate}
\end{theorem}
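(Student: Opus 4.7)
The plan is to deduce every part of the theorem from \cref{t:main} together with the results on monoids of graph agglomerations developed in \cref{s:agglomerations}, treating this statement essentially as an assembly of consequences. The transfer homomorphism $\theta \colon T(R) \to \agg(\cG_R)$ from \cref{t:main}, combined with \cref{p:transfer-implication}, yields $\cL(T(R)) = \cL(\agg(\cG_R))$; this identifies sets of lengths and all length-derived invariants, so that $\rho(T(R)) = \rho(\agg(\cG_R))$, $\rho_k(T(R)) = \rho_k(\agg(\cG_R))$ for each $k \ge 2$, and $\Delta(T(R)) = \Delta(\agg(\cG_R))$. This dictionary is what makes the bulk of the theorem a routine transfer.

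With that in hand, parts \ref{mf:elasticities-general}, \ref{mf:distances}, \ref{mf:uk}, and \ref{mf:lengths} follow from the corresponding general statements of \cref{t:krull-finiteness} applied to the finitely generated reduced Krull monoid $\agg(\cG_R)$, transported back along $\theta$. The explicit graph-theoretic descriptions in parts \ref{mf:elasticities}, \ref{mf:refined-elasticities}, and \ref{mf:half-factorial} will be read off directly from the graph-specific results \cref{thm:rho}, \cref{thm:rhok}, and \cref{t:factorial}, via the same transfer of invariants.

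Part \ref{mf:cat-omega} requires a separate argument, because the $\omega$-invariant and the catenary degree are not in general preserved by transfer homomorphisms between possibly noncancellative monoids. However, if $\Pic(R)$ is trivial, then \cref{t:transhom} states that the transfer homomorphism $\psi$ there is injective, so $T(R)$ itself is isomorphic to a Diophantine monoid, and in particular to a finitely generated Krull monoid. One then applies \cref{t:krull-finiteness}\ref{kf:omega} and \ref{kf:cat} directly to $T(R)$ to conclude that $\omega(T(R)) < \infty$ and that the set of catenary degrees is finite.

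Finally, part \ref{mf:existence} follows by combining \ref{mf:elasticities} with the realization result \cref{p:realization}: choose a semilocal Bass ring $R$ whose prime ideal intersection graph is the complete graph $\cK_m$ on $m$ vertices, so that $\rho(T(R)) = m - 2 + 2/m$, which exceeds any prescribed $N$ once $m$ is taken large enough. If there is any obstacle in this assembly, it lies in the bookkeeping for \ref{mf:cat-omega}, where one must be in a setting ($\Pic(R)$ trivial) in which $T(R)$ is cancellative before the standard Krull-monoid finiteness theorems for $\omega$ and $\mathsf{c}$ apply directly; every other part is a matter of chaining \cref{t:main}, \cref{p:transfer-implication}, \cref{t:krull-finiteness}, and the agglomeration results from \cref{s:agglomerations}.
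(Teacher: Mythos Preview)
Your proposal is correct and follows essentially the same route as the paper's proof: invoke \cref{t:main} for the transfer homomorphism to $\agg(\cG_R)$, read off the length-based invariants \ref{mf:elasticities-general}, \ref{mf:distances}, \ref{mf:uk}, \ref{mf:lengths} from \cref{t:krull-finiteness}, the graph-specific bounds \ref{mf:elasticities}, \ref{mf:refined-elasticities}, \ref{mf:half-factorial} from \cref{thm:rho}, \cref{thm:rhok}, \cref{t:factorial}, handle \ref{mf:cat-omega} via the injectivity statement in \cref{t:transhom} so that $T(R)$ is itself a finitely generated Krull monoid, and obtain \ref{mf:existence} from \cref{p:realization} with $\cG_R=\cK_m$. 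The only cosmetic difference is that the paper cites \cref{exm:elasticity} directly for the elasticity of $\agg(\cK_n)$, whereas you invoke the equality clause in \ref{mf:elasticities}; both are equivalent.
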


\begin{proof}
See \cref{s:tie-in}.   
\end{proof}

\section{Monoids of graph agglomerations} \label{s:agglomerations}

In this section we depart from the theme of modules over Bass rings.
Instead we introduce the new, to our knowledge, class of monoids of graph agglomerations and study their factorization theory.
This will be justified by the final section of the paper, in which we identify the images of the transfer homomorphisms in \cref{s:modules} as monoids of graph agglomerations.
Thus, results on the factorization theory of monoids of graph agglomerations yield corresponding results on the factorization theory of monoids of modules over Bass rings.

As the monoids introduced here may be of independent interest to the factorization theory community, we have ensured that present section is readable independently from the rest of the paper (excluding, perhaps, the occasional glance at the background material presented in \cref{s:background}).
Keep in mind that our graphs are finite and permit multiple edges but no loops.

\begin{definition} \label{def:agglomeration}
    Let $\cG=(V,E,r)$ be a graph.
    \begin{enumerate}
        \item An \defit{agglomeration on $\cG$} is a function $a \colon V \cup E \to \bN_0$ such that $a(v) \ge a(e)$ whenever $v \in V$ and $e \in E$ are incident.
    \item  The \defit{monoid of (graph) agglomerations on $\cG$}, denoted by $\agg(\cG)$ is the set of all agglomerations on the graph $\cG$ together with pointwise addition.
    \end{enumerate}
\end{definition}

Observe that a sum of two agglomerations is an agglomeration, and that the function that is identically zero is an agglomeration, so that $\agg(\cG)$ is indeed a monoid.
As a submonoid of $\bN_0^{V \cup E}$, the monoid $\agg(\cG)$ is commutative, cancellative, and reduced.
If $\cG$ is the null graph, then $\agg(\cG)=0$ is the trivial group.
We tacitly ignore this trivial case when convenient.
If $\cG$ is a trivial graph, then $\agg(\cG)=(\bN_0,+)$ is a factorial monoid with the unique prime element $1$; we sometimes also exclude this trivial case for convenience.

The following two basic properties are immediate from the definition.

\begin{lemma} \label{l:substructure}
    Let $\cG$ be a graph.
    \begin{enumerate}
    \item If $\cG'$ is a subgraph of $\cG$, then $\agg(\cG')$ embeds canonically as a divisor-closed submonoid into $\agg(\cG)$.
    \item If $\cG_1 \oplus \cG_2$ is a disjoint union of two graphs, then
    \[
    \agg(\cG_1 \oplus \cG_2) \cong \agg(\cG_1) \times \agg(\cG_2).
    \]
    \end{enumerate}
\end{lemma}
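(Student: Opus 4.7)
The plan is to treat both parts via the same underlying observation: agglomerations are determined pointwise, and the defining inequality $a(v) \ge a(e)$ is local to each incidence $e \sim v$. So both statements reduce to careful bookkeeping of how incidences behave under taking subgraphs and disjoint unions.

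For (1), I would define the embedding $\iota \colon \agg(\cG') \to \agg(\cG)$ by extension by zero: given $a' \colon V' \cup E' \to \bN_0$, set $\iota(a')(x) = a'(x)$ for $x \in V' \cup E'$ and $\iota(a')(x) = 0$ otherwise. To see that $\iota(a')$ is an agglomeration on $\cG$, consider any incidence $e \sim v$ in $\cG$. By definition of a subgraph, $e \in E'$ forces both endpoints of $e$ to lie in $V'$, so the constraint either reduces to the corresponding constraint in $\cG'$ (when $e \in E'$, and then $v \in V'$ automatically), or $e \notin E'$ in which case $\iota(a')(e) = 0 \le \iota(a')(v)$ trivially. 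The map $\iota$ is plainly an injective monoid homomorphism. For divisor-closedness, suppose $a \in \agg(\cG)$ lies in the image of $\iota$ (equivalently, $a$ vanishes on $(V \cup E) \setminus (V' \cup E')$) and $a = b + c$ with $b, c \in \agg(\cG)$. Since $b$ and $c$ take values in $\bN_0$, both must vanish on the same set, hence both lie in the image of $\iota$.

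For (2), let $\cG_i = (V_i, E_i, r_i)$ for $i = 1, 2$, and consider the obvious restriction map $\agg(\cG_1 \oplus \cG_2) \to \agg(\cG_1) \times \agg(\cG_2)$, $a \mapsto (a|_{V_1 \cup E_1}, a|_{V_2 \cup E_2})$. In the other direction, a pair $(a_1, a_2)$ determines a function on $(V_1 \sqcup V_2) \cup (E_1 \sqcup E_2)$ by gluing. The key point is that every incidence $e \sim v$ in $\cG_1 \oplus \cG_2$ lies entirely within one of the components (since an edge has its endpoints in the component where it was defined), so the agglomeration conditions for the two factors are independent and together are equivalent to the agglomeration condition on the disjoint union. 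Both maps are monoid homomorphisms with respect to pointwise addition, and they are clearly mutually inverse.

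There is no real obstacle here; the only subtlety worth flagging is that the definition of subgraph (as recalled immediately before the lemma) requires edges of $E'$ to be incident only with vertices of $V'$, which is exactly what makes the extension-by-zero in (1) compatible with the agglomeration condition. Both statements are, after unwinding definitions, verifications at the level of the underlying functions.
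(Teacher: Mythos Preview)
Your proof is correct and complete. The paper itself does not provide a proof of this lemma at all; it is introduced with the remark that these properties ``are immediate from the definition'' and followed by the comment ``Despite this lemma being trivial,'' so your argument is the natural spelling-out of what the authors deemed obvious.
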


Despite this lemma being trivial, it has two significant consequences for the study of factorizations in agglomeration monoids.
First, it generally suffices to consider connected graphs.
Second, it is possible to obtain lower bounds on arithmetical invariants by working with suitable subgraphs.

The definition of an agglomeration ensures that whenever $a(e) > 0$ for an edge $e$, then also $a(v)>0$ for the vertices $v$ incident with $e$; this allows us to define the support of an agglomeration.

\begin{definition} 
    Let $\cG=(V,E,r)$ be a graph.
    \begin{enumerate}
        \item If $a \in \agg(\cG)$ then the support of $a$, denoted by $\supp(a)$, is the subgraph of $\cG$ consisting of the vertices $v \in V$ with $a(v)>0$ and the edges $e \in E$ with $a(e) > 0$.
        \item If $\cG'=(V',E',r')$ is a subgraph of $\cG$, then the \defit{indicator} $\ind_{\cG'} \colon V \cup E \to \bN_0$ is the agglomeration defined by
        \[
        \ind_{\cG'}(x) =
        \begin{cases}
        1 & \text{if $x \in V' \cup E'$,} \\
        0 & \text{otherwise}.
        \end{cases}
        \]
    \end{enumerate}
\end{definition}

It is clear that the indicator and the support furnish a bijection between subgraphs of $\cG$ and agglomerations on $\cG$ taking values in $\{0,1\}$.
We shall tacitly identify subgraphs of $\cG$ with their indicators when convenient.
If $v \in V$, we write $\ind_{v}$ for the indicator which is $1$ on $v$ and $0$ everywhere else.
If $e \in E$, we write $\ind_{(r(e),e)}$ for the indicator which is $1$ on $e$ and its two incident vertices, but $0$ everywhere else. 

\begin{lemma}[Splitting Lemma] \label{splitting}
    Let $a$ be an agglomeration on a non-null graph $\cG=(V,E,r)$.
    \begin{enumerate}
    \item \label{splitting:max}
    Let $m = \max\{\, a(x) : x \in V \cup E \,\}$.
    Let $b \colon V \cup E \to \bN_0$ be defined by
    \[
    b(x) =
    \begin{cases}
    1 &\text{if $a(x)=m$,}\\
    0 &\text{if $a(x) < m$.}
    \end{cases}
    \]
    Then $b$ and $a-b$ are agglomerations, and therefore the agglomeration $a$ factors as $a = b + (a-b)$ in $\agg(\cG)$.
    
    \item \label{splitting:supp}
    The indicator $\ind_{\supp(a)}$ and $a - \ind_{\supp(a)}$ are agglomerations, and therefore the agglomeration $a$ factors as $a = \ind_{\supp(a)} + (a-\ind_{\supp(a)})$ in $\agg(\cG)$.
    \end{enumerate}
\end{lemma}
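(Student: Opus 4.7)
My plan is to verify both claims by direct case analysis on incident vertex-edge pairs, checking non-negativity and the agglomeration inequality $f(v) \ge f(e)$ for each function constructed.

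For part \ref{splitting:max}, I first assume $a \ne 0$ so that $m \ge 1$ (the case $a = 0$ yields only the trivial factorization $0 = 0 + 0$). To see that $b$ is an agglomeration, suppose $b(e) = 1$ and $v$ is incident with $e$. Then $a(e) = m$, and the agglomeration inequality $a(v) \ge a(e) = m$ combined with the maximality of $m$ forces $a(v) = m$, hence $b(v) = 1$. Non-negativity is built into the definition of $b$. For $a - b$: non-negativity holds because $b(x) = 1$ occurs only when $a(x) = m \ge 1$. The agglomeration inequality then reduces, after the previous observation has ruled out $b(v) = 0, b(e) = 1$, to the only nontrivial case $b(v) = 1$, $b(e) = 0$. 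In that case $(a-b)(v) = m - 1 \ge a(e) = (a-b)(e)$ since $a(e) < m$.

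For part \ref{splitting:supp}, I first observe that $\supp(a)$ is a legitimate subgraph of $\cG$: if $a(e) > 0$ and $v$ is incident with $e$, then $a(v) \ge a(e) > 0$, so both endpoints of $e$ lie in the vertex set of $\supp(a)$. Hence $\ind_{\supp(a)}$ is an agglomeration (this is just \cref{l:substructure}\,(1) applied to the subgraph $\supp(a)$). For $a - \ind_{\supp(a)}$: non-negativity is immediate since $a(x) \ge 1$ on $\supp(a)$ and both functions vanish outside it. For the agglomeration inequality it suffices to treat $e \in \supp(a)$ (otherwise the right-hand side is $0$), and then $v \in \supp(a)$ as well, giving $(a - \ind_{\supp(a)})(v) = a(v) - 1 \ge a(e) - 1 = (a - \ind_{\supp(a)})(e)$.

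There is no real obstacle; both parts are elementary consequences of the definitions. The one subtlety is the degenerate case $a = 0$ in part \ref{splitting:max}, where $m = 0$ and the formula would declare $b$ to be the all-ones function, forcing $a - b$ to have negative entries. The statement must therefore be read with the implicit assumption $a \neq 0$, which is harmless since the lemma is only meaningful when a nontrivial summand is being split off.
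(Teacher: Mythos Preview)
Your proof is correct and follows essentially the same approach as the paper's: a direct case-by-case verification of the agglomeration inequality for each constructed function. Your observation about the degenerate case $a=0$ in part~\ref{splitting:max} is a valid catch that the paper glosses over; one minor remark is that your appeal to \cref{l:substructure} for $\ind_{\supp(a)}$ being an agglomeration is unnecessary, since in the paper the indicator of any subgraph is an agglomeration by definition.
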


\begin{proof}
\ref{splitting:max}
    If $e$ is an edge with $a(e)=m$, then $b(v) \ge m$ for any incident vertex $v$.
    Since $m$ is the maximum of the function $a$, we must have $b(v)=m$.
    Thus $b$ is an agglomeration.
    
    Consider now $a-b$.
    We only have to verify the inequality for vertices $v$ with $a(v)=m$.
    Then $(a-b)(v)=m-1$.
    Let $e$ be an edge incident with $v$.
    If $a(e) < m$, then $(a-b)(e)=a(e) \le m-1 = (a-b)(v)$.
    Otherwise, we have $a(e)=m$, but then $(a-b)(e) = a(e)-1 = m-1 = (a-b)(v)$.
    
\ref{splitting:supp}
    It is easy to see that in $a - \ind_{\supp(a)}$ all positive values are decreased by $1$, and therefore the inequalities for an agglomeration are satisfied. 
\end{proof}

Observe that in \ref{splitting:max} of the previous lemma, the agglomeration $b$ is simply the indicator of the subgraph of $\cG$ at which $a$ is maximal.

\begin{proposition} \label{p:agg-atoms}
    Let $\cG$ be a graph.
    An agglomeration $a \in \agg(\cG)$ is an atom in $\agg(\cG)$ if and only if $a=\ind_{\cG'}$ for a non-null, connected subgraph of $\cG'$.
\end{proposition}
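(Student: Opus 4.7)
The plan is to handle the two implications separately, using the Splitting Lemma to reduce from arbitrary agglomerations to indicators, and then handle connectedness by a simple combinatorial argument using the agglomeration condition.

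For the forward direction, suppose $a$ is an atom. I would apply the Splitting Lemma \ref{splitting}\ref{splitting:supp} to obtain the factorization $a = \ind_{\supp(a)} + (a - \ind_{\supp(a)})$ in $\agg(\cG)$. Since $a \neq 0$, the summand $\ind_{\supp(a)}$ is nonzero, so atomicity forces $a - \ind_{\supp(a)} = 0$, that is, $a = \ind_{\cG'}$ with $\cG' = \supp(a)$, which is non-null. To see that $\cG'$ is connected, assume for contradiction that $\cG' = \cG'_1 \oplus \cG'_2$ is a nontrivial disjoint union of two non-null subgraphs. Then $\ind_{\cG'_1}$ and $\ind_{\cG'_2}$ are both nonzero agglomerations with $a = \ind_{\cG'_1} + \ind_{\cG'_2}$, contradicting the fact that $a$ is an atom. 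Thus $\cG'$ must be connected.

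For the reverse direction, suppose $a = \ind_{\cG'}$ with $\cG'$ a non-null, connected subgraph of $\cG$, and write $a = b + c$ with $b,c \in \agg(\cG)$. Since $a$ takes values in $\{0,1\}$, and $b,c$ are $\bN_0$-valued, both $b$ and $c$ take values in $\{0,1\}$. Hence $b = \ind_{\cG_b}$ and $c = \ind_{\cG_c}$ for subgraphs $\cG_b, \cG_c$ of $\cG$ whose vertex sets and edge sets partition those of $\cG'$ disjointly. The defining condition of an agglomeration (applied to $b$ and to $c$) ensures that whenever an edge $e$ lies in $\cG_b$, both vertices incident to $e$ must also lie in $\cG_b$; the analogous statement holds for $\cG_c$. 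Thus every edge of $\cG'$ has both endpoints in the same part of the partition $V(\cG') = V(\cG_b) \sqcup V(\cG_c)$.

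Now connectedness of $\cG'$ closes the argument: if both $V(\cG_b)$ and $V(\cG_c)$ were nonempty, a path in $\cG'$ between a vertex of one part and a vertex of the other would contain an edge with endpoints in different parts, contradicting the previous observation. Hence one of $V(\cG_b), V(\cG_c)$ is empty, and consequently one of $b, c$ is zero, so $a$ is an atom.

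There is no real obstacle here; the main conceptual step is recognising that $\{0,1\}$-valuedness propagates to any factorization, so the entire question reduces to a transparent combinatorial statement about partitioning the vertices and edges of $\cG'$ consistently with the incidence relation.
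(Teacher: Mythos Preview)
Your proof is correct and follows essentially the same approach as the paper: both use the Splitting Lemma to reduce an atom to an indicator function, and both reduce the connectedness question to the observation that $\ind_{\cG'} = \ind_{\cG_1'} + \ind_{\cG_2'}$ in $\agg(\cG)$ precisely when $\cG'$ is the disjoint union of $\cG_1'$ and $\cG_2'$. The paper compresses both directions into this single biconditional, whereas you spell out each implication separately, but the content is the same.
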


\begin{proof}
    If $a$ is an atom, the splitting lemma implies $a(x) \in \{0,1\}$ for all $x \in V \cup E$.
    Thus necessarily $a = \ind_{\cG'}$ for a subgraph $\cG'$ of $\cG$.
    Now $\ind_{\cG'} = \ind_{\cG_1'} + \ind_{\cG_2'}$ if and only if $\cG'$ is the disjoint union of $\cG_1'$ and $\cG_2'$.
    The claim follows.
\end{proof}

The splitting lemma implies that every nonzero $a \in \agg(\cG)$ is a sum of atoms, that is, the monoid $\agg(\cG)$ is atomic (this is also obvious from more general facts).
Having in mind the bijection between subgraphs and indicator functions, the monoid $\agg(\cG)$ is generated by the connected subgraphs of $\cG$.
Hence, an arbitrary element of $\agg(\cG)$ may be viewed as an \emph{agglomeration} of subgraphs of $\cG$, explaining our terminology.

The next two results place $\agg(\cG)$ in the much larger class of Krull monoids.
For Krull monoids there is an well-established machinery to study their factorization theory via transfer homomorphisms to monoids of zero-sum sequences.
Surprisingly, this does \emph{not} seem to be the best approach to study agglomeration monoids on graphs; after a short interlude we will therefore return to study the factorization theory of agglomeration monoids directly.

\begin{theorem} \label{t:divtheory}
    Let $\cG=(V,E,r)$ be a graph.
    The monoid of agglomerations $\agg(\cG)$ is a finitely generated reduced Krull monoid and its divisor theory has $3\card{E} + \iota$ prime divisors.
    Here, $\iota$ is the number of isolated vertices in $\cG$.
\end{theorem}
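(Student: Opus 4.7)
The plan is to prove the statement by exhibiting an explicit divisor theory for $\agg(\cG)$ into a free monoid of rank $3\card{E} + \iota$. That $\agg(\cG)$ is reduced and cancellative is immediate as it embeds into $\bN_0^{V \cup E}$. By \cref{p:agg-atoms}, its atoms are precisely the indicators $\ind_{\cG'}$ of connected non-null subgraphs, of which there are only finitely many, so $\agg(\cG)$ is finitely generated. Once a divisor theory into a free monoid of rank $3\card{E} + \iota$ is produced, the Krull property and the prime divisor count follow from the standard theory recalled in \cref{s:background}.

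I would index the codomain as follows: for each edge $e$, a prime $P_e$; for each incident pair $(v,e)$ with $v \in r(e)$, a prime $P_{v,e}$; and for each isolated vertex $v$, a prime $Q_v$. This gives an index set $I$ with $\card{I} = \card{E} + 2\card{E} + \iota = 3\card{E} + \iota$. Define $\varphi \colon \agg(\cG) \to \bN_0^I$ by $P_e(a) \coloneqq a(e)$, $P_{v,e}(a) \coloneqq a(v) - a(e)$, and $Q_v(a) \coloneqq a(v)$; nonnegativity of $P_{v,e}$ is precisely the agglomeration constraint. To see that $\varphi$ is a divisor homomorphism, suppose $\varphi(a) \le \varphi(b)$ coordinatewise and set $c \coloneqq b - a \in \bZ^{V \cup E}$. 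Then $c(e) = P_e(b) - P_e(a) \ge 0$ for every edge, $c(v) = Q_v(b) - Q_v(a) \ge 0$ for every isolated vertex, and for any non-isolated $v$ with incident edge $e$ one has $c(v) - c(e) = P_{v,e}(b) - P_{v,e}(a) \ge 0$ and $c(v) = P_{v,e}(b) - P_{v,e}(a) + c(e) \ge 0$. Hence $c \in \agg(\cG)$, so $a$ is a summand of $b$.

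The central step, and the main obstacle, is to verify that every standard basis vector in $\bN_0^I$ arises as the coordinatewise minimum of $\varphi$-images of atoms. For an isolated vertex $v$, one has directly $\varphi(\ind_v) = Q_v$. For $P_{v,e}$, the atom $\ind_v$ has $\varphi$-image supported on $\{P_{v,f} : f \sim v\}$ with every such coordinate equal to $1$; for each $f \sim v$ with $f \neq e$, the atom $\ind_{(r(f),f)}$ has $P_{v,f} = 0$ while $P_{v,e} = 1$, so the coordinatewise minimum over these finitely many images is exactly the basis vector $P_{v,e}$. For $P_e$ with endpoints $v_1, v_2$, the atom $\ind_{(r(e),e)}$ already yields $P_e = 1$, $P_{v_i,e} = 0$, and $P_f = 0$ for $f \neq e$; the only remaining coordinates that may be nonzero are $P_{v_i,f}$ for edges $f \sim v_i$ with $f \neq e$. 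Since $e$ and $f$ share the vertex $v_i$, they lie in the same connected component of $\cG$, so there is a connected subgraph $\cG_{e,f} \subseteq \cG$ containing both; then $\ind_{\cG_{e,f}}$ is an atom with $P_e = 1$ and $P_{v_i,f} = 1 - 1 = 0$. Taking the coordinatewise minimum of $\varphi(\ind_{(r(e),e)})$ with the finitely many such $\varphi(\ind_{\cG_{e,f}})$ yields the basis vector $P_e$. This completes the verification that $\varphi$ is a divisor theory, whence $\agg(\cG)$ is a finitely generated reduced Krull monoid with exactly $3\card{E} + \iota$ prime divisors.
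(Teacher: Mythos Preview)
Your proof is correct and follows essentially the same approach as the paper: both construct the divisor theory by sending $a$ to the edge values $a(e)$ together with the slack variables $a(v)-a(e)$, and then verify that each standard basis vector is a coordinatewise minimum of images. The differences are cosmetic: the paper first reduces to the connected case (handling isolated vertices separately), proves the divisor property via the observation that the image is a saturated Diophantine submonoid, and realizes each basis vector as a minimum of exactly two images (using $\ind_{(r(e),e)}$ together with $\ind_\cG$ for $P_e$, and $\ind_v$ together with a suitable $b$ for $P_{v,e}$); you instead treat isolated vertices explicitly via extra coordinates $Q_v$, verify the divisor property by directly checking that $b-a$ is an agglomeration, and realize basis vectors as minima over slightly larger finite families of atoms.
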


\begin{proof}
    It suffices to consider the case where $\cG$ is connected.
    If $\cG$ has no edges, then it is either the null graph, in which case $\agg(\cG)=0$, or it consists of a single vertex, in which case $\agg(G)=\bN_0$, which is a Krull monoid with a single prime divisor.
    From now on we may assume that $\cG$ has at least one edge.
    
    By definition $\agg(\cG)$ is the submonoid of $\bN^{V \cup E}$ defined by the inequalities $a(e) \le a(v)$ whenever $e \in E$ is incident with $v \in V$.
    We shall explicitly determine a divisor theory for $\agg(\cG)$; we do so through the standard technique of introducing slack variables to turn the defining inequalities into equations and then eliminate redundant variables.
    
    Let $I = \{\, (e,v) \in E \times V : e \sim v \,\}$. Note $\card{I} = 2\card{E}$; we may assume without restriction $E \cap (E \times V) = \emptyset$.
    For $a \in \agg(\cG)$ we define $f_a \colon E \cup I \to \bN_0$ by 
    \[
    f_a(e)=a(e) \text{ for $e \in E$} \quad\text{and} \quad f_a(e,v) = a(v) - a(e) \text{ for $(e,v) \in I$}.
    \] 
    Let $\varphi\colon \agg(\cG) \to \bN_0^{E \cup I}$ be defined by $\varphi(a)=f_a$.
    Then $\varphi$ is a monoid homomorphism.
    We show that $\varphi$ is injective.
    Let $a$,~$b \in \agg(\cG)$ with $f_a=f_b$.
    Then $a(e)=b(e)$ for all $e \in E$.
    Let $v \in V$ and let $e$ be an edge incident with $v$.
    Then $a(v) = f_a(e,v) + f_a(e) = f_b(e,v) + f_b(v) = b(v)$.
    
    It is similarly easy to see that $\varphi(\agg(\cG))$ is the submonoid of all $f \in \bN_0^{E \cup I}$ satisfying the equations $f(e,v) + f(e) = f(e',v) + f(e')$ whenever $e$,~$e' \in E$ are both incident with $v$.
    As a Diophantine monoid, therefore $\varphi(\agg(\cG)) \subseteq \bN_0^{E \cup I}$ is saturated.
    Hence $\varphi \colon \agg(\cG) \to \bN_0^{E \cup I}$ is a divisor homomorphism.
    
    To show that $\varphi$ is a divisor theory, it suffices to show that for every standard basis vector $f$ of $\bN_0^{E \cup I}$ there exist $a$,~$b \in \agg(\cG)$ with $f = \min\{ \varphi(a), \varphi(b) \}$, where the minimum is taken pointwise.
    
    We distinguish two cases for $f \in \bN_0^{E \cup I}$. 
    Let first $e \in E$ and let $f(e)=1$ and $f(x)=0$ for all $x \in E\setminus\{e\} \cup I$.
    Let $v$,~$w$ be the two vertices incident with $e$.
    Define $a \in \agg(\cG)$ by $a(e)=a(v)=a(w)=1$ and $a(x)=0$ if $x \not \in \{v,w,e\}$.
    Then $f_a(e)=1$ and $f_a(e')=0$ for $e' \in E \setminus \{e\}$. 
    Now define $b \in \agg(\cG)$ by $b(v')=b(e')=1$ for all $v' \in V$ and $e' \in E$.
    Then $f_b(e')=1$ for all $e' \in E$ and $f_b(e',v')=0$ for all $(e',v')\in I$.
    We conclude $f = \min\{f_a,f_b\}$.
    
    Let now $(e,v) \in I$ and let $f(e,v)=1$ and $f(x)=0$ for all $x \in E \cup I \setminus (e,v)$.
    Define $a \in \agg(\cG)$ by $a(v) = 1$ and $a(x)=0$ for all $x \ne v$.
    Then $f_a(e)=0$ for all $e \in E$.
    For the incidences, we have $f_a(e',v)=1$ if $e' \sim v$ and $f_a(e',v')=0$ for all $(e',v') \in I$ with $v' \ne v$.
    Now define $b \in \agg(\cG)$ by $b(v')=1$ for all $v' \in V$, by $b(e)=0$, and $b(e') = 1$ if $e' \in E \setminus \{e\}$.
    Then $f_b(e,v') = 1$ for both vertices $v'$ with $v' \sim e$.
    Moreover $f_b(e',v')=0$ for all $(e',v') \in I$ with $e' \ne e$.
    We again conclude $f = \min\{f_a,f_b\}$.
\end{proof}

\begin{corollary} \label{c:class-group}
    The divisor class group of $\agg(\cG)$ is free abelian of rank $2\card{E} - \card{V} + \iota$, where $\iota$ is the number of isolated vertices in $\cG$.
\end{corollary}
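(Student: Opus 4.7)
The plan is to compute $\Cl(\agg(\cG))$ directly from the divisor theory $\varphi \colon \agg(\cG) \to \bN_0^{(P)}$ exhibited in the proof of \cref{t:divtheory}, where $P$ denotes the set of prime divisors, with $\card{P} = 3\card{E} + \iota$ (the $3\card{E}$ coordinates coming from edges and incidences, the remaining $\iota$ coordinates from the isolated vertices). By definition of the class group of a Krull monoid, $\Cl(\agg(\cG)) = \bZ^{P}/\quo(\varphi(\agg(\cG)))$, and the task reduces to analyzing this quotient.

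My first step will be to identify $\quo(\varphi(\agg(\cG)))$ with the subgroup $K \subseteq \bZ^{P}$ cut out by the equations $f(e,v) + f(e) = f(e',v) + f(e')$ for every vertex $v$ and every pair of edges $e$,~$e'$ both incident with $v$ (with no condition on the coordinates indexed by isolated vertices). The inclusion $\quo(\varphi(\agg(\cG))) \subseteq K$ is immediate from the description of $\varphi(\agg(\cG))$ given in the proof of \cref{t:divtheory}. For the reverse inclusion, I will pick $g = \varphi(a) \in \varphi(\agg(\cG))$ with strictly positive entries in every coordinate---for instance take $a(v)=2$ and $a(e)=1$ for all $v$,~$e$, noting that then the isolated-vertex coordinates of $g$ equal $2$ as well---and observe that, for any $f \in K$, the vector $ng + f$ has nonnegative entries for $n$ sufficiently large and still satisfies all the equations. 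Hence $ng+f \in \varphi(\agg(\cG))$ and $f = (ng+f) - ng \in \quo(\varphi(\agg(\cG)))$.

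Next, at each non-isolated vertex $v$ I will fix a reference edge $e_v$ incident with $v$ and define a $\bZ$-linear map $M \colon \bZ^{P} \to \bZ^{N}$ whose rows are indexed by pairs $(v,e)$ with $e \ne e_v$ incident with $v$, via
\[
    M_{(v,e)}(f) = f(e,v) + f(e) - f(e_v,v) - f(e_v).
\]
Every defining equation of $K$ is a $\bZ$-linear combination of these particular differences, so $K = \ker(M)$; the number of rows is $N = \sum_{v \colon \deg v \ge 1}(\deg v - 1) = 2\card{E} - (\card{V} - \iota) = 2\card{E} - \card{V} + \iota$.

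The final observation will be that $M$ is surjective. The incidence coordinate $(e,v) \in I$ enters only the single equation indexed by the pair $(v,e)$ (with the non-reference edge $e$), so the column of $M$ indexed by $(e,v)$ equals the standard basis vector $\vec e_{(v,e)} \in \bZ^{N}$. From the resulting short exact sequence $0 \to \ker(M) \to \bZ^{P} \to \bZ^{N} \to 0$ we conclude $\Cl(\agg(\cG)) \cong \bZ^{N}$, a free abelian group of rank $2\card{E} - \card{V} + \iota$. The main delicacy I anticipate is the second step, namely verifying that every integral solution of the defining linear system actually arises as a difference of two agglomerations (not merely as a formal $\bZ$-linear expression); the positivity shift $f \mapsto ng + f$ described above is the key device that takes care of this.
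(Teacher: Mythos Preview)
Your proposal is correct and follows essentially the same route as the paper's proof: both identify $\quo(\varphi(\agg(\cG)))$ with the kernel of the same linear map $M$ (fixing a reference edge at each non-isolated vertex) via the identical positivity shift using the agglomeration $a(v)=2$, $a(e)=1$, and both deduce the rank by observing that each non-reference incidence coordinate $(e,v)$ occurs in exactly one row. The only cosmetic difference is that the paper first reduces to connected components whereas you handle the isolated vertices directly in the global count.
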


\begin{proof}
    The divisor class group of a finite product of Krull monoid is isomorphic to the product of the individual class groups.
    Without restriction, let $\cG$ be connected and non-trivial.
    
    We continue in the notation of the proof of \cref{t:divtheory}.
    The image of the divisor theory $\varphi\colon \agg(\cG) \to \bN_0^{E \cup I}$ is defined by the equations $f(e,v)+f(e)=f(e',v)+f(e')$ whenever $e$,~$e'$ are edges incident with a vertex $v$.
    Fixing for each vertex $v$ an edge $e_v$ incident with $v$, we observe that the equations
    \[
    f(e_v,v)+f(e_v) = f(e',v) + f(e'), \quad\text{whenever $e' \in E\setminus\{e_v\}$ is incident with $v$,}
    \]
    are sufficient to define $\im \varphi$.
    This yields $\card{I} - \card{V} + \iota = 2\card{E} - \card{V} + \iota$ linear equations defining $\im \varphi$ as a submonoid of $\bN_0^{E\cup I}$.
    These equations are all linearly independent because $f(e',v)$ appears in a unique equation.
    We may write the corresponding system matrix as an integer matrix $M$ with columns indexed by $E \cup I$, and having $2\card{E}- \card{V} + \iota$ linearly independent rows.
    
    Denote by $\quo(\im\varphi)$ the quotient group of $\im\varphi$.
    Then the divisor class group $\Cl(\agg(\cG))$ is isomorphic to $\bZ^{E \cup I} / \quo(\im \varphi)$.
    We claim $\quo(\im\varphi) = \ker M$.
    Indeed the inclusion $\quo(\im\varphi) \subseteq \ker M$ is obvious.
    For the converse, note that the agglomeration $b \in \agg(\cG)$ with $b(v)=2$ for all $v \in V$ and $b(e)=1$ for all $e \in E$ yields a vector $f_b \in \ker M$ all of whose entries are positive.
    Now let $x \in \ker(M)$.
    Let $n \in \bN_0$ be sufficiently large so that $x+n f_b \in \bN_0^{E \cup I} \cap \ker(M)$.
    Then $nf_b$,~$x+nf_b \in \im\varphi$ and $x = (x+n f_b) - n f_b \in \quo(\im \varphi)$.
    
    We conclude $\Cl(\agg(\cG)) \cong \bZ^{E \cup I}/\ker(M) \cong \im M$.
    Hence $\Cl(\agg(\cG))$ is free abelian of rank $2\card{E} - \card{V} + \iota$.
\end{proof}

\begin{example}
    Let $\cG$ be the cycle graph of length $3$ with vertices $v_1$, $v_2$, $v_3$ and edges $e_{12}$, $e_{23}$, $e_{13}$.
    The divisor theory maps $\agg(\cG)$ into $\bN_0^9$, explicitly
    \[
    \begin{split}
    \varphi(a) = \big(&a(e_{23}), a(e_{13}), a(e_{12}), a(v_1) - a(e_{12}), a(v_1) - a(e_{13}),\\
    &a(v_2) - a(e_{12}), a(v_2) - a(e_{23}), a(v_3) - a(e_{13}), a(v_3) - a(e_{23})\big).
    \end{split}
    \]
    Then $\im\varphi = \bN_0^9 \cap \ker(M)$ with
    \[
    M =
    \begin{bmatrix}
    0 & 1 & -1 & -1 & 1 & 0 & 0 & 0 & 0 \\
    1 & 0 & -1 & 0 & 0 & -1 & 1 & 0 & 0 \\
    1 & -1 & 0 & 0 & 0 & 0 & 0 & -1 & 1
    \end{bmatrix}.
    \]
    The divisor class group $G \coloneqq \Cl(\agg(\cG)) \cong \bZ^9/\ker(A) \cong \im(A)$ is isomorphic to $\bZ^3$.
    In the study of Krull monoids, it is also important to understand the subset $G_0$ of $G$ containing prime divisors, that is, the image of the $9$ standard basis vectors under the map $\bZ^9 \to \bZ^9/\ker(A)$.
    This is just the set of columns of $A$.
    In this example $G_0 \ne -G_0$, which is atypical for the Krull monoids that are usually being studied, but typical for agglomeration monoids.
    As a consequence, many known results about Krull monoids are not applicable in our setting.
\end{example}

Since $\agg(\cG)$ is a finitely generated Krull monoid, all the main finiteness results (\cref{t:krull-finiteness}) about such monoids hold.
We continue with a more detailed investigation of the arithmetic of $\agg(\cG)$.

\begin{definition}
    Let $\cG=(E,V,r)$ be a non-null graph and $a \in \agg(\cG)$.
    \begin{enumerate}
        \item The \defit{sequence-length} of $a$ is 
        \[  
        \ell(a) \coloneqq \sum_{v \in V} \deg_{\cG}(v) a(v) - \sum_{e\in E} a(e) \in \bN_0.
        \]
        \item The \defit{Davenport constant} of $\agg(\cG)$ is
        \[
        \sD(\agg(\cG)) \coloneqq \max\{\, \ell(a) : a \text{ an atom of $\agg(\cG)$} \,\}.
        \]
    \end{enumerate}
\end{definition}

Using the defining property of an agglomeration, it is easy to verify that $\ell(a)$ is indeed nonnegative.

\begin{remark}
A divisor theory of a Krull monoid gives rise to a transfer homomorphism to a monoid of zero-sum sequences $\mathcal B(G_0)$, where $G_0$ is a subset of  the divisor class group $G$ (see the surveys \cite{baeth-wiegand13,geroldinger16,geroldinger-zhong20}).
In this way, to every $a \in \agg(\cG)$, we can associate a zero-sum sequence $\theta(a) \in \mathcal B(G_0)$.
The sequence-length $\ell$ is defined in such a way that $\ell(a)$ is indeed the length of the zero-sum sequence $\theta(a)$.
This can be seen from the divisor theory as constructed in the proof of \cref{t:divtheory}.
By the uniqueness properties of a divisor theory, this length does not depend on the particular choice of divisor theory.

The Davenport constant is a classical invariant in factorization theory through which many arithmetical invariants can be estimated, or even expressed exactly.
In many cases, e.g., when $G=G_0$ is a finite abelian group, it is notoriously difficult to compute the Davenport constant.
The case of agglomeration monoids is remarkable in that it is very easy to compute $\sD(\agg(\cG))$.
However, due to the set of prime divisors not being symmetric in our setting ($G_0 \ne -G_0$) this is of limited use. 
For instance, $\sD(\agg(\cG))$ gives bounds on invariants such as the refined elasticities $\rho_k(\agg(\cG))$, but in the case $G_0 \ne -G_0$ it does not give precise values.
\end{remark}

\begin{proposition} \label{p:davenport}
    Let $\cG = (E,V,r)$ be a non-trivial, connected graph.
    \begin{enumerate}
        \item\label{davenport:formula} We have
        \[
        \sD(\agg(\cG)) = 2\card{E} - \card{V} + 1.
        \]
        \item\label{davenport:extremal} 
        An atom $a$ of $\agg(\cG)$ satisfies $\ell(a)=\sD(\agg(\cG))$ if and only if $\supp(a)$ is a tree containing every vertex of degree at least $2$.
        In particular, every spanning tree of $\cG$ yields an atom of maximal sequence-length.
        If $\cG$ has minimum vertex degree at least $2$, then the atoms of maximal sequence-length are in bijection with the spanning trees of $\cG$.
    \end{enumerate}
\end{proposition}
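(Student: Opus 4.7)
The plan is to leverage the atom characterization from \cref{p:agg-atoms}: every atom of $\agg(\cG)$ is an indicator $\ind_{\cG'}$ of a non-null, connected subgraph $\cG' = (V',E')$ of $\cG$. Plugging such an indicator into the definition of sequence-length yields the simple formula
\[
\ell(\ind_{\cG'}) = \sum_{v \in V'} \deg_\cG(v) - \card{E'},
\]
so computing $\sD(\agg(\cG))$ amounts to maximizing this expression over non-null connected subgraphs $\cG'$ of $\cG$. Note that the degrees appearing are degrees in $\cG$, not in $\cG'$, which is what makes the formula tractable.

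First I would reduce to the case where $\cG'$ is a tree. If $\cG'$ contains a cycle (including a multi-edge in the case of parallel edges), then deleting one edge of that cycle yields a non-null, connected subgraph $\cG''$ with $\ell(\ind_{\cG''}) = \ell(\ind_{\cG'}) + 1$; so any extremal $\cG'$ is a tree, in particular $\card{E'} = \card{V'} - 1$. Using the handshake identity $\sum_{v \in V}\deg_\cG(v) = 2\card{E}$, this rewrites as
\[
\ell(\ind_{\cG'}) = 2\card{E} - \sum_{v \notin V'} \deg_\cG(v) - \card{V'} + 1.
\]
Since $\cG$ is non-trivial and connected, every vertex has $\deg_\cG(v) \ge 1$, so $\sum_{v \notin V'} \deg_\cG(v) \ge \card{V \setminus V'}$ with equality if and only if every vertex outside $V'$ has degree exactly $1$ in $\cG$, i.e., $V'$ contains every vertex of degree at least $2$. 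Combining these bounds gives $\ell(\ind_{\cG'}) \le 2\card{E} - \card{V} + 1$. Because $\cG$ is finite and connected it has a spanning tree $T$, and $\ind_T$ realizes this bound (the condition $V^{\ge 2} \subseteq V$ being automatic), proving part (1) and also showing that every spanning tree yields an atom of maximal sequence-length.

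The above chain of inequalities pins down exactly when equality holds, giving the characterization in part (2): an atom $\ind_{\cG'}$ has maximal sequence-length if and only if $\cG'$ is a tree with $V^{\ge 2} \subseteq V'$. If $\cG$ has minimum vertex degree at least $2$, this containment forces $V' = V$, so the extremal atoms are precisely the indicators of spanning trees of $\cG$; different spanning trees have different edge sets and thus yield distinct indicators, establishing the bijection. The main subtlety to get right is the cycle-elimination step in the presence of multi-edges and the verification that edge removal preserves non-nullness and connectedness; after that, everything reduces to the handshake identity together with the basic bound $\deg_\cG(v) \ge 1$ on a connected non-trivial graph.
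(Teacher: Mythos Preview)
Your proof is correct and follows essentially the same approach as the paper: both start from the formula $\ell(\ind_{\cG'}) = \sum_{v\in V'}\deg_\cG(v) - \card{E'}$ for atoms and then optimize over connected subgraphs. The only cosmetic difference is the order of the two reductions---the paper first enlarges the vertex set and then minimizes edges, whereas you first pass to a tree and then use the handshake identity together with $\deg_\cG(v)\ge 1$ to bound the missing-vertex contribution; your rewriting has the pleasant side effect that the equality condition for part~(2) falls out directly from a single inequality.
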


\begin{proof}
    \ref{davenport:formula}
    Let $a \in \agg(\cG)$ be an atom.
    Then $a$ is the indicator of the connected subgraph $\supp(a)=(V_a,E_a,r_a)$ of $\cG$. 
    Therefore we get
    \begin{equation} \label{eq:atomlength}
    \ell(a) = \sum_{v \in V_a} \deg_{\cG}(v) - \card{E_a}.
    \end{equation}
    Note that the degree of $v$ is considered as a vertex of $\cG$, \emph{not} as a vertex of $\supp(a)$.
    
    If there is a vertex $v \in V\setminus V_a$, then $v$ can be chosen in such a way that it can be connected to $\supp(a)$ by a single edge.
    Adding $v$, and a connecting edge, to $\supp(a)$ produces an atom $a'$ with $\ell(a') \ge \ell(a)$.
    To maximize \cref{eq:atomlength}, we may therefore assume $V_a=V$.
    Then
    \[
        \ell(a) = \sum_{v \in V} \deg(v) - \card{E_a} = 2 \card{E} - \card{E_a}.
    \]
    This value is maximized by taking $\card{E_a}$ minimal, which happens if and only if $\supp(a)$ is a spanning tree.
    In this case $\card{E_a} = \card{V} - 1$.
    
    \ref{davenport:extremal}
    Let again $a$ be an atom with $\supp(a)=(V_a,E_a,r_a)$.
    If there exists $v \in V \setminus V_a$ and $\deg(v) > 1$, then the extension process just described produces an atom $a'$ with $\ell(a') > \ell(a)$ and $\ell(a)$ cannot be maximal.
    Similarly, if $\supp(a)$ is not a tree we may remove an edge, without breaking the connectivity, to find an atom $a'$ with $\ell(a') > \ell(a)$.
    Thus, if $\ell(a) = \sD(\agg(\cG))$, then $\supp(a)$ is a tree and contains every vertex of $\cG$ of degree at least $2$.
    
    Conversely, if $\supp(a)$ is of this form, we may extend $a$ to $a'$ with $\supp(a')$ a spanning tree and $\ell(a)=\ell(a')$.
    By the argument in \ref{davenport:formula}, this sequence-length is maximal.
\end{proof}

In the following lemma we characterize prime and absolutely irreducible elements of $\agg(\cG)$ and show that the indicator of a graph decomposes uniquely as the sum of the indicators of its connected components.

\begin{lemma} \label{l:nicefact}
    Let $\cG=(V,E,r)$ be a non-null graph.
    \begin{enumerate}
        \item\label{nicefact:primes} An atom $a\in \agg(\cG)$ with $\supp(a)=(V', E', r')$ is prime if and only if $\deg_{\cG}(v)\leq 1$ for all $v\in V'$. 
        \item\label{nicefact:divisors} Let $a$ be an atom of $\agg(\cG)$ and $b \in \agg(\cG)$.
        If $a$ is a summand of $nb$ for some $n \ge 1$, then $a$ is a summand of $b$.
        \item\label{nicefact:strongatom} Every atom of $\agg(\cG)$ is absolutely irreducible.
        \item\label{nicefact:disjoint} Let $\cG_1$, $\ldots\,$,~$\cG_s$ be pairwise disjoint, connected subgraphs of $\cG$.
        Then the factorization of $a=\ind_{\cG_1} + \cdots + \ind_{\cG_s}$ is unique \textup{(}up to order\textup{)}.
    \end{enumerate}
\end{lemma}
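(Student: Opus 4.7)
I would tackle the four parts in the order (2), (3), (4), (1), with (3) and (4) following from general principles and (2), and (1) requiring the most delicate case analysis.

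For (2), write $a = \ind_{\cG'}$ with $\cG' = (V', E', r')$ and suppose $nb - a \in \agg(\cG)$. Componentwise $nb(x) \ge a(x)$ together with integrality of $b$ force $b(x) \ge 1$ whenever $a(x) = 1$, so $b \ge a$ pointwise. The only agglomeration inequality for $b - a$ that does not reduce directly to one for $b$ arises at an incidence $(v, e)$ with $v \in V'$ and $e \in E \setminus E'$; there the condition $nb(v) - 1 \ge nb(e)$ inherited from $nb - a \in \agg(\cG)$ gives $b(v) - b(e) \ge 1/n$, and integrality upgrades this to $b(v) - b(e) \ge 1$. Part (3) follows at once: in any factorization $na = a_1 + \cdots + a_m$ into atoms, each $a_i$ is a summand of $na$ and hence, by (2), a summand of the atom $a$, forcing $a_i = a$; cancellativity of $\agg(\cG)$ then yields $m = n$.

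For (4), the element $\ind_{\cG_1} + \cdots + \ind_{\cG_s}$ equals $\ind_{\cG_1 \sqcup \cdots \sqcup \cG_s}$ and takes values in $\{0, 1\}$, so any factorization into atoms must consist of indicators of connected subgraphs with pairwise disjoint supports in $V \cup E$. Such a connected subgraph must lie inside a single component $\cG_i$, and a connected graph cannot be written as a nontrivial disjoint union of subgraphs, so each $\cG_i$ is exactly the support of one atom in the factorization, giving a canonical bijection with the $\ind_{\cG_j}$.

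For (1), the direction $(\Leftarrow)$ is a short case analysis: $\cG'$ connected and non-null with $\deg_\cG(v) \le 1$ for every $v \in V'$ forces $\cG'$ to be either a single vertex isolated in $\cG$ or a connected component of $\cG$ consisting of a single edge with its two endpoints, and in either case no edge of $\cG$ external to $\supp(a)$ touches $V'$, so whichever of $b, c$ dominates $a$ on the relevant coordinate automatically dominates $a$ in $\agg(\cG)$. For $(\Rightarrow)$, argued contrapositively, I would pick $v \in V'$ with $\deg_\cG(v) \ge 2$ and two distinct incident edges $e_1, e_2 \in E$, and construct witnesses $b, c \in \agg(\cG)$ with $a \le b + c$ but $a \not\le b$ and $a \not\le c$, where the non-divisibility is witnessed by a failed agglomeration inequality at an incidence involving $v$. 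The main obstacle is the case split on whether $e_1, e_2$ lie in $E'$: if some $e_i$ lies outside $E'$, I would exploit a relation of the form $a + \ind_{(r(e_i), e_i)} = \ind_{\cG' \cup \{e_i, v_i\}} + \ind_v + \varepsilon$, where $v_i$ is the other endpoint of $e_i$ and $\varepsilon \in \{0, \ind_{v_i}\}$ corrects for whether $v_i$ already lies in $V'$; if instead both $e_1, e_2$ lie in $E'$, so that $\deg_{\cG'}(v) \ge 2$, I would partition $E'$ into two nonempty parts containing $e_1$ and $e_2$ respectively, take the connected augmentations $\cG'_1, \cG'_2$ of these edge sets in $\cG'$, and use the resulting identity $\ind_{\cG'_1} + \ind_{\cG'_2} = a + \ind_{\cG'_1 \cap \cG'_2}$, where the overlap $\cG'_1 \cap \cG'_2$ contributes only vertex terms.
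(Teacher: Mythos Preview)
Your treatment of parts (2), (3), and (4) is correct and essentially matches the paper's arguments; the paper is terser in (3) (it just says ``by \ref{nicefact:divisors}'') and phrases (4) via the divisor-closed submonoid $\agg(\cG_1) \times \cdots \times \agg(\cG_s)$, but the content is the same.

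Part (1) has two genuine gaps, both concerning the single-vertex atom $a = \ind_v$.

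In your $(\Leftarrow)$ direction, the claimed dichotomy is false: if $\deg_\cG(v) = 1$ then $\cG' = \{v\}$ is neither an isolated vertex of $\cG$ nor an edge-component, yet it satisfies the hypothesis. In this missing case there \emph{is} an external edge $e$ touching $V'$, so your argument ``no external edge touches $V'$'' does not apply. One still has to argue primeness directly: if $a + b = c + d$ then $(c+d)(v) > (c+d)(e)$, so without loss $c(v) > c(e)$, and since $e$ is the \emph{only} edge at $v$ this single strict inequality already forces $c - \ind_v \in \agg(\cG)$. This is exactly the argument the paper gives.

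In your $(\Rightarrow)$ direction, when $a = \ind_v$ with $\deg_\cG(v) \ge 2$ you land in your Case~1 (both $e_1, e_2$ lie outside $E' = \emptyset$), but your relation collapses to the tautology $\ind_v + \ind_{(r(e_i),e_i)} = \ind_{(r(e_i),e_i)} + \ind_v$, which witnesses nothing since $a$ is a summand of $\ind_v$. The fix is easy---take $b = \ind_{(r(e_1),e_1)}$ and $c = \ind_{(r(e_2),e_2)}$; then $(b+c)(v) = 2$ exceeds $(b+c)(e)$ for every $e\sim v$, giving $a \mid b+c$, while $b(v) = b(e_1)$ and $c(v) = c(e_2)$ block $a \mid b$ and $a \mid c$. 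The paper sidesteps your case split altogether with a single uniform witness: the agglomeration $b$ with $b(w) = \deg_\cG(w)$ on vertices and $b(e) = 1$ on edges, which factors both as $\sum_{e \in E} \ind_{(r(e),e)}$ and as $\ind_\cG + \sum_w (\deg_\cG(w)-1)\ind_w$; one then checks that $a$ divides $b$ but fails to divide every summand of at least one of these two factorizations, depending only on whether $\card{E'}=1$.
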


\begin{proof}
    \ref{nicefact:primes}
    We may without restriction assume that $\cG$ is connected.
    Suppose that $a$ is an atom of $\agg(\cG)$ with $\supp(a)=(V', E', r')$ and $\deg_{\cG}(v)\leq 1$ for all $v\in V'$. Then $\deg_{\supp(a)}(v)=1$ for each $v\in V'$ as well and, since $\supp(a)$ is connected, either $\supp(a)=\ind_{v}$ is a trivial graph or $\supp(a)=\ind_{(r(e),e)}$ for some edge $e$. In the latter case, we must have $\cG = \ind_{(r(e),e)}$ as well. With $v_1$ and $v_2$ the only vertices of $\cG$ and $e$ its only edge, the only atoms in $\agg(\cG)$ are $\ind_{v_1}$, $\ind_{v_2}$, and $a=\ind_{\cG}$. Clearly $a$ is prime. In the former case, we have $V'=\{v\}$ and either $V=\{v\}$ or there is a unique $e\in E$ with $v\in r(e)$. If $V=\{v\}$, then clearly $a$ is prime. Otherwise, suppose $a+b=c+d$ for some $b$, $c$,~$d \in \agg(\cG)$.
    Then $a(v)=1$ and $a(e)=0$ implies $c(v)+d(v)=a(v)+b(v) > a(e)+b(e) = c(e)+d(e)$. Without restriction $c(v) > c(e)$, and therefore $a$ is a summand of $c$ in $\agg(\cG)$.
    
    Now suppose $a\in \agg(\cG)$ is an atom with  $\supp(a)=(V', E', r')$ and such that there is $v\in V'$ with $\deg_{\cG}(v)\geq 2$.
    We have to show that $a$ is not prime.
    Consider $b \in \agg(\cG)$ with $b(v)=\deg_{\cG}(v)$ for every vertex $v$, and $b(e) = 1$ for every edge $e$.
    Observe that $a$ is a summand of $b$.
    Moreover,
    \[
    b = \sum_{e \in E} \ind_{(r(e),e)} = \ind_{\cG} + \sum_{v \in V} (\deg_{\cG}(v) - 1) \ind_{v}.
    \]
    Now, if $\card{E'}\ne 1$, then $a$ cannot be a summand of any $\ind_{(r(e),e)}$.
    If $\card{E'}=1$, then $a$ is neither a summand of $\ind_{\cG}$ nor of any $\ind_v$.
    In any case, the agglomeration $a$ is not a prime element.
    
    \ref{nicefact:divisors} Let $\supp(a)=(E_a,V_a,r_a)$.
    Note that $a$ is a summand of $nb$ if and only if, for every $v \in V_a$ and every $e \in E \setminus E_a$ with $e \sim v$, we have $nb(e) < nb(v)$.
    But then also $b(e) < b(v)$.
    Thus $a$ is a summand of $b$.
    
    \ref{nicefact:strongatom} By \ref{nicefact:divisors}.
    
    \ref{nicefact:disjoint}
    Clearly $\ind_{\cG_i}$ factors uniquely as an element of $\agg(\cG_i)$ since it is an atom there.
    The claim follows because $a$ is contained in the divisor-closed submonoid $\agg(\cG_1) \times \cdots \times \agg(\cG_s)$ of $\agg(\cG)$.
\end{proof}

\subsection{Elasticities}

We now turn our attention to specific bounds for the elasticity $\rho(\agg(\cG))$ and the refined elasticities $\rho_k(\agg(\cG))$.
After that, we characterize factorial and half-factorial monoids of graph agglomerations in \cref{t:factorial}.

A \defit{semi-length function} is a monoid homomorphism $\sigma\colon \agg(\cG) \to \mathbb R_{\ge 0}$ with $\sigma(a)=0$ if and only if $a=0$. 
By \cite{anderson-anderson92} we have $\rho(\agg(\cG))\leq M^*/m^*$
with
\begin{align*}
M^* &= \max\{\,\sigma(a) : \text{$a$ is an atom of $\agg(\cG)$}\,\} \quad\text{and} \\
m^* &= \min\{\,\sigma(a) : \text{$a$ is an atom of $\agg(\cG)$}\,\}.
\end{align*}

\begin{theorem} \label{thm:rho}
    Let $\cG$ be a non-null graph.
    Let $m$ be the maximal order of a connected component of $\cG$, and $D$ the maximal degree of $\cG$.

    \begin{enumerate}
        \item\label{rho:semilength} For every $r > D/2$, the function $\sigma_r\colon \agg(\cG) \to \bR_{\ge 0}$ defined by
         \[
            \sigma_r(a)=r \sum_{v \in V} a(v) - \sum_{e\in E} a(e)
        \]
         is a semi-length function.
        \item\label{rho:generic} We have $\rho(\agg(\cG)) \le m - \frac{m-1}{D}$.
        \item\label{rho:simple} If $\cG$ is simple, then $\rho(\agg(\cG)) \le m - 2 + \frac{2}{m}$.
    \end{enumerate}
\end{theorem}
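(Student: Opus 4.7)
My plan is to deduce parts~\ref{rho:generic} and~\ref{rho:simple} from the Anderson--Anderson bound $\rho(\agg(\cG)) \le M^*/m^*$ applied to the semi-length functions $\sigma_r$ established in part~\ref{rho:semilength}, for carefully chosen values of $r$. By \cref{p:agg-atoms}, every atom of $\agg(\cG)$ is the indicator $\ind_{\cG'}$ of a non-null connected subgraph $\cG'=(V',E',r')$; writing $n = \card{V'}$ and $k = \card{E'}$, we have $\sigma_r(\ind_{\cG'}) = rn - k$, with the constraints $1 \le n \le m$ and $k \ge n - 1$ (connectedness).

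For~\ref{rho:semilength}, I would use the observation that for every edge $e$ with incident vertices $v, w$ the defining property of an agglomeration yields $2a(e) \le a(v) + a(w)$. Summing over all edges and regrouping by vertices gives
\[
2 \sum_{e \in E} a(e) \le \sum_{v \in V} \deg_{\cG}(v)\,a(v) \le D \sum_{v \in V} a(v),
\]
so $\sigma_r(a) \ge (r - D/2) \sum_{v \in V} a(v) \ge 0$ whenever $r > D/2$. Vanishing forces $a(v)=0$ for all $v$, and the agglomeration inequality then forces $a(e)=0$ for all $e$; hence $\sigma_r$ is a semi-length function.

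For~\ref{rho:generic} I would take $r = D$, which is valid since $D > D/2$. The maximum of $\sigma_D(\ind_{\cG'}) = Dn - k$ is attained by a spanning tree of a largest component ($k = n - 1$ minimal and $n = m$ maximal), giving $M^* = (D-1)m + 1$. For the minimum, the trivial atom $\ind_v$ gives $\sigma_D(\ind_v) = D$; for $n \ge 2$ the vertex-degree estimate $2k \le Dn$ gives $\sigma_D \ge Dn/2 \ge D$, so $m^* \ge D$. Anderson--Anderson then yields $\rho(\agg(\cG)) \le ((D-1)m + 1)/D = m - (m-1)/D$. For~\ref{rho:simple} I would take $r = m/2$, valid since $D \le m - 1$ in a simple graph. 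The spanning-tree argument again gives $M^* = m^2/2 - m + 1$, while simplicity upgrades the edge bound to $k \le \binom{n}{2}$, so
\[
\sigma_{m/2}(\ind_{\cG'}) \ge \tfrac{mn}{2} - \tfrac{n(n-1)}{2} = \tfrac{n(m - n + 1)}{2}.
\]
As a concave quadratic in $n$ on $[1,m]$, this is minimized at the endpoints $n = 1$ and $n = m$, both of which evaluate to $m/2$; thus $m^* \ge m/2$, and Anderson--Anderson gives $\rho(\agg(\cG)) \le (m^2/2 - m + 1)/(m/2) = m - 2 + 2/m$.

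The delicate point in each case is choosing $r$: it must exceed $D/2$ so that $\sigma_r$ is a semi-length function, but the Anderson--Anderson ratio $M^*/m^*$ is only sharp when the minimum of $\sigma_r$ on atoms is simultaneously attained at the trivial atom $\ind_v$ and at the densest permissible atom (a multi-edge realization of a $K_2$ in~\ref{rho:generic}, or the complete graph $K_m$ in~\ref{rho:simple}). The choices $r = D$ and $r = m/2$ are precisely these balance points, which is why they yield the stated bounds.
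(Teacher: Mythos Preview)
Your proposal is correct and follows essentially the same approach as the paper: both use the Anderson--Anderson bound with the semi-length functions $\sigma_r$, choosing $r=D$ for \ref{rho:generic} and $r=m/2$ for \ref{rho:simple}, and both identify $M^*$ via spanning trees and $m^*$ via the vertex atom together with the degree/edge-count bounds. Your argument for \ref{rho:semilength} (summing $2a(e)\le a(v)+a(w)$ over edges) is in fact cleaner than the paper's terse justification; the only omission is the trivial degenerate case $D=0$ (equivalently $m=1$), where $r=D$ fails to satisfy $r>D/2$ but $\agg(\cG)$ is factorial anyway---the paper handles this by reducing to connected components with $m\ge 2$.
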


\begin{proof}
    If $\cG_1$, $\ldots\,$,~$\cG_s$ are the connected components of $\cG$, it is easily shown that $\rho(\agg(\cG)) = \max\{\, \rho(\agg(\cG_i)) : i \in [1,s] \,\}$, see \cite[Proposition 1.4.5.2]{geroldinger-halterkoch06}.
    Since it also suffices to establish the semi-length property on each connected component, we assume without restriction that $\cG$ is connected and of order $m$.
    The claims are trivial for $m=1$, so assume $m \ge 2$.
    
    \ref{rho:semilength}
    Clearly $\sigma_r$ is additive on $\agg(\cG)$.
    Since  $r>\frac{D}{2}$ and $\cG$ has at most $Dm/2$ edges, we have $\sigma(r)\geq 0$ with equality only when $a=0$. 
    
    We now show \ref{rho:generic} and \ref{rho:simple} through a suitable choice of $r$.
    Suppose $r \ge 1$.
    From the definition of $\sigma_r$ and the fact that an atom has values in $\{0,1\}$, it is then clear that $\sigma_r(a)$ is maximized by taking $a$ an atom with $\supp(a)$ a spanning tree. Thus $M^*=rm - (m-1)$.

    \ref{rho:generic} Set $r = D \ge 1$. Let $a$ be an atom with $\supp(a)$ of order $m'$.
    If $m'=1$, then clearly $\sigma_r(a)=D$.
    If $m' > 1$, then $\sigma_r(a) \ge m'D - m'D/2 = m'D/2  \ge D$.
    Thus $m^*=D$ and $\rho(\agg(\cG)) \le m - (m-1)/D$.
    
    \ref{rho:simple}
    Set $r = m/2 \ge 1$ and let $a$ be an atom with $\supp(a)$ of order $m' \le m$.
    Since $\cG$ is simple, the support of $a$ has at most $m'(m'-1)/2$ edges.
    Thus
    \[
    \sigma_r(a) \ge r m' - \frac{m'(m'-1)}{2} = r + (m'-1)\Big( r - \frac{m'}{2} \Big) \ge r. 
    \]
    Thus $m^*=m/2$ and $\rho(\agg(\cG)) \le m - 2 + 2/m$.
\end{proof}

In some cases the previous result, with a suitable choice of $r$, allows us to determine the exact elasticity, as the next examples show.

\begin{example} \label{exm:elasticity} 
\mbox{}
\begin{enumerate}
    \item 
    Consider the $n$-cycle $\cG=\mathcal C_n$.
    For $i \in[1,n]$ let $\cT_i \subseteq \cC_n$ denote the subgraph in which we omit the $i$-th edge.
    Let $a = \cT_1 + \cdots + \cT_n$.
    Then $a(v)=n$ for all $v \in V$ and $a(e)=n-1$ for all $e \in E$.
    Thus $a=(n-1)\cC_n + \ind_{v_1} + \cdots + \ind_{v_n}$, where $v_i$ are the individual vertices. 
    (See \cref{fig:c4} for an illustration of this construction on the $4$-cycle.)
    We conclude $\rho(a) \ge 2 - \frac{1}{n}$.
    Since every proper subgraph of $\cC_n$ is a tree, one can check that with $r=n/(n-1)$ we have $\sigma_r(a) \ge r$ for all atoms $a$. 
    As in the previous theorem, the choice of $r$ gives a corresponding upper bound for the elasticity, and hence
    \[
        \rho(\agg(\cC_n)) = 2 - \frac{1}{n}.
    \]
    
    \item
    Suppose $\cG$ is a connected, simple $k$-regular graph on $n$ vertices.
    For each $v\in V$, define the subgraph $\cG_v=(V_v, E_v, r_v)$ with $E_v=\{\,e\in E : v\in r(e)\,\}$, with $V_v=\{\,r(e): e\in E_v\,\}$, and with $r_v=r|_{E_v}$.
    Then 
    \[
    \sum_{v\in V}\ind_{\cG_v}=\ind_{\cG}+\ind_{\cG}+(k-1)\sum_{v\in V}\ind_{v}
    \]
    are two factorizations of the agglomeration on $\cG$ assigning to every vertex the value $k+1$ and every edge the value $2$.
    (See \cref{fig:k4} for an illustration of this construction on the complete graph $\cK_4$.)
    Thus 
    \[
    \rho(\agg(\cG))\geq \frac{n(k-1)+2}{n}=(k-1)+\frac{2}{n}.
    \]
    
    \item
    Let $\cG=\mathcal K_n$ be a complete graph on $n$ vertices.
    The last example together with the upper bound from \ref{rho:simple} of \ref{thm:rho} shows
    \[
    \rho(\agg(\cG)) = n + \frac{2}{n} - 2.
    \]
    In particular, the upper bound for simple graphs in \cref{thm:rho} can be attained.
\end{enumerate}
\end{example}

\begin{figure}

\centering

\tikzstyle{vertex}=[circle, draw]

\noindent \begin{tikzpicture}[transform shape, scale=0.5]
\node[vertex](a) at (0, 5) {\LARGE $4$};
\node[vertex](b) at (0, 9) {\LARGE $4$};
\node[vertex](c) at (4, 5) {\LARGE $4$};
\node[vertex](d) at (4, 9) {\LARGE $4$};

\begin{scope}[every path/.style={-}, every node/.style={sloped, fill=white}]
\draw (a) -- node[midway, rotate=-90] {\LARGE $3$} (b);
\draw (b) -- node[midway,] {\LARGE $3$} (d);
\draw (d) -- node[midway, rotate=90] {\LARGE $3$} (c);
\draw (c) -- node[midway,] {\LARGE $3$} (a);
\end{scope} 

\node at (5,7) {\LARGE $=$};

\node[vertex](a) at (6, 5) {\LARGE $3$};
\node[vertex](b) at (6, 9) {\LARGE $3$};
\node[vertex](c) at (10, 5) {\LARGE $3$};
\node[vertex](d) at (10, 9) {\LARGE $3$};

\begin{scope}[every path/.style={-}, every node/.style={sloped, fill=white}]
\draw (a) -- node[midway, rotate=-90] {\LARGE $3$} (b);
\draw (b) -- node[midway] {\LARGE $3$} (d);
\draw (d) -- node[midway, rotate=90] {\LARGE $3$} (c);
\draw (c) -- node[midway] {\LARGE $3$} (a);
\end{scope}  

\node at (11, 7) {\LARGE $+$};

\node[vertex](a) at (12, 5) {\LARGE $1$};
\node[vertex](b) at (12, 9) {\LARGE $1$};
\node[vertex](c) at (16, 5) {\LARGE $1$};
\node[vertex](d) at (16, 9) {\LARGE $1$};

\node at (5,1.5) {\LARGE $=$};

\node[vertex](a) at (6, -0.5) {\LARGE $1$};
\node[vertex](b) at (6, 3.5) {\LARGE $1$};
\node[vertex](c) at (10, -0.5) {\LARGE $1$};
\node[vertex](d) at (10, 3.5) {\LARGE $1$};

\begin{scope}[every path/.style={-}, every node/.style={sloped, fill=white}]
\draw (a) -- node[midway, rotate=-90] {\LARGE $1$} (b);
\draw (b) -- node[midway] {\LARGE $1$} (d);
\draw (d) -- node[midway, rotate=90] {\LARGE $1$} (c);
\end{scope} 

\node at (11,1.5) {\LARGE $+$};

\node[vertex](a) at (12, -0.5) {\LARGE $1$};
\node[vertex](b) at (12, 3.5) {\LARGE $1$};
\node[vertex](c) at (16, -0.5) {\LARGE $1$};
\node[vertex](d) at (16, 3.5) {\LARGE $1$};

\begin{scope}[every path/.style={-}, every node/.style={sloped, fill=white}]
\draw (a) -- node[midway, rotate=-90] {\LARGE $1$} (b);
\draw (b) -- node[midway] {\LARGE $1$} (d);
\draw (a) -- node[midway] {\LARGE $1$} (c);
\end{scope} 

\node at (17,1.5) {\LARGE $+$};

\node[vertex](a) at (18, -0.5) {\LARGE $1$};
\node[vertex](b) at (18, 3.5) {\LARGE $1$};
\node[vertex](c) at (22, -0.5) {\LARGE $1$};
\node[vertex](d) at (22, 3.5) {\LARGE $1$};

\begin{scope}[every path/.style={-}, every node/.style={sloped, fill=white}]
\draw (a) -- node[midway, rotate=-90] {\LARGE $1$} (b);
\draw (a) -- node[midway] {\LARGE $1$} (c);
\draw (d) -- node[midway, rotate=90] {\LARGE $1$} (c);
\end{scope} 

\node at (23,1.5) {\LARGE $+$};

\node[vertex](a) at (24, -0.5) {\LARGE $1$};
\node[vertex](b) at (24, 3.5) {\LARGE $1$};
\node[vertex](c) at (28, -0.5) {\LARGE $1$};
\node[vertex](d) at (28, 3.5) {\LARGE $1$};

\begin{scope}[every path/.style={-}, every node/.style={sloped, fill=white}]
\draw (a) -- node[midway] {\LARGE $1$} (c);
\draw (b) -- node[midway] {\LARGE $1$} (d);
\draw (c) -- node[midway, rotate=-90] {\LARGE $1$} (d);
\end{scope}

\end{tikzpicture}

\caption{Decomposition in $\agg(\mathcal C_4)$ illustrating $\rho(\agg(\cC_n)) = 2 - \tfrac{1}{n}$. See \cref{exm:elasticity}.}    \label{fig:c4}
\end{figure}

\begin{figure}

\centering

\tikzstyle{vertex}=[circle, draw]

\noindent \begin{tikzpicture}[transform shape, scale=0.5]
\node[vertex](a) at (0, 8) {\LARGE $4$};
\node[vertex](b) at (6, 8) {\LARGE $4$};
\node[vertex](c) at (3, 10.5) {\LARGE $4$};
\node[vertex](d) at (3, 14) {\LARGE $4$};

\begin{scope}[every path/.style={-}]
\draw (a) -- node[midway, fill=white] {\LARGE $2$} (b);
\draw (a) -- node[midway, fill=white] {\LARGE $2$} (c);
\draw (a) -- node[midway, fill=white] {\LARGE $2$} (d);
\draw (b) -- node[midway, fill=white] {\LARGE $2$} (c);
\draw (b) -- node[midway, fill=white] {\LARGE $2$} (d);
\draw (c) -- node[midway, fill=white] {\LARGE $2$} (d);
\end{scope} 

\node at ((7,10.5) {\LARGE $=$};

\node[vertex](a) at (8, 8) {\LARGE $2$};
\node[vertex](b) at (14, 8) {\LARGE $2$};
\node[vertex](c) at (11, 10.5) {\LARGE $2$};
\node[vertex](d) at (11, 14) {\LARGE $2$};

\begin{scope}[every path/.style={-}]
\draw (a) -- node[midway, fill=white] {\LARGE $2$} (b);
\draw (a) -- node[midway, fill=white] {\LARGE $2$} (c);
\draw (a) -- node[midway, fill=white] {\LARGE $2$} (d);
\draw (b) -- node[midway, fill=white] {\LARGE $2$} (c);
\draw (b) -- node[midway, fill=white] {\LARGE $2$} (d);
\draw (c) -- node[midway, fill=white] {\LARGE $2$} (d);
\end{scope} 

\node at ((15,10.5) {\LARGE $+$};

\node[vertex](a) at (16, 8) {\LARGE $2$};
\node[vertex](b) at (22, 8) {\LARGE $2$};
\node[vertex](c) at (19, 10.5) {\LARGE $2$};
\node[vertex](d) at (19, 14) {\LARGE $2$};

\node at ((0,2.5) {\LARGE $=$};

\node[vertex](a) at (1, 0) {\LARGE $1$};
\node[vertex](b) at (7, 0) {\LARGE $1$};
\node[vertex](c) at (4, 2.5) {\LARGE $1$};
\node[vertex](d) at (4, 6) {\LARGE $1$};

\begin{scope}[every path/.style={-}]
\draw (a) -- node[midway, fill=white] {\LARGE $1$} (b);
\draw (a) -- node[midway, fill=white] {\LARGE $1$} (c);
\draw (a) -- node[midway, fill=white] {\LARGE $1$} (d);
\end{scope} 

\node at ((8,2.5) {\LARGE $+$};

\node[vertex](a) at (9, 0) {\LARGE $1$};
\node[vertex](b) at (15, 0) {\LARGE $1$};
\node[vertex](c) at (12, 2.5) {\LARGE $1$};
\node[vertex](d) at (12, 6) {\LARGE $1$};

\begin{scope}[every path/.style={-}]
\draw (a) -- node[midway, fill=white] {\LARGE $1$} (b);
\draw (b) -- node[midway, fill=white] {\LARGE $1$} (c);
\draw (b) -- node[midway, fill=white] {\LARGE $1$} (d);
\end{scope} 

\node at ((16,2.5) {\LARGE $+$};

\node[vertex](a) at (17, 0) {\LARGE $1$};
\node[vertex](b) at (23, 0) {\LARGE $1$};
\node[vertex](c) at (20, 2.5) {\LARGE $1$};
\node[vertex](d) at (20, 6) {\LARGE $1$};

\begin{scope}[every path/.style={-}]
\draw (a) -- node[midway, fill=white] {\LARGE $1$} (c);
\draw (b) -- node[midway, fill=white] {\LARGE $1$} (c);
\draw (c) -- node[midway, fill=white] {\LARGE $1$} (d);
\end{scope} 

\node at ((24,2.5) {\LARGE $+$};

\node[vertex](a) at (25, 0) {\LARGE $1$};
\node[vertex](b) at (31, 0) {\LARGE $1$};
\node[vertex](c) at (28, 2.5) {\LARGE $1$};
\node[vertex](d) at (28, 6) {\LARGE $1$};

\begin{scope}[every path/.style={-}]
\draw (a) -- node[midway, fill=white] {\LARGE $1$} (d);
\draw (b) -- node[midway, fill=white] {\LARGE $1$} (d);
\draw (c) -- node[midway, fill=white] {\LARGE $1$} (d);
\end{scope}

\end{tikzpicture}

\caption{Decomposition in $\agg(\mathcal K_4)$ illustrating $\rho(\agg(\cK_n)) = n + \tfrac{2}{n} - 2$.
See \cref{exm:elasticity}. }  
 \label{fig:k4}
\end{figure}

Non-simple graphs can also attain the weaker upper bound in \ref{rho:generic}; see \cref{exm:bk} below.
We are also able to give a bound on the refined elasticities.

\begin{theorem} \label{thm:rhok}
    Let $\cG$ be a graph.
    \begin{enumerate}
        \item \label{rho:refined-bound} For all $k \ge 2$,
        \[  
        \rho_k(\agg(\cG)) \le (k-1)\card{V} + 1.
        \]
        \item \label{rho:equality}
        Equality in the previous bound holds if and only if $\cG$ contains $k$ edge-disjoint spanning trees.
        In particular, the graph $\cG$ must have edge-connectivity at least $k$.
    \end{enumerate}
\end{theorem}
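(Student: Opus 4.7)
The plan is to prove the upper bound via a vertex-counting identity combined with a hypergraph connectivity lemma, and then analyze the equality case separately. For part~\ref{rho:refined-bound}, fix $a \in \agg(\cG)$ admitting factorizations $a = a_1 + \cdots + a_k = b_1 + \cdots + b_n$ into atoms. First I would strip off the prime single-vertex atoms $\ind_v$ arising at vertices $v$ isolated in $\supp(a)$ (these contribute equally to every length) and then treat each connected edge-component of $\supp(a)$ separately. Combining the per-component bounds $n_\ell \le (k_\ell - 1)|V(C_\ell)| + 1$ back into a bound of the form $(k-1)|V|+1$ for the original $a$ is a short concentration argument exploiting $\sum k_\ell = k$, $\sum |V(C_\ell)| \le |V|$, and the constraints $k_\ell \ge 1$, $|V(C_\ell)| \ge 2$. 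So it suffices to establish the bound when $\supp(a)$ is edge-connected and every vertex of $V$ is incident to some edge with $a(e)>0$.

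For this reduced case, partition $\{1,\ldots,n\}$ into singletons and non-singletons $N = \{\,j : |V_j'| \ge 2\,\}$; counting vertex incidences yields the identity
\[
   n \;=\; \sum_{v \in V} a(v) \;-\; \sum_{j \in N} \bigl(|V_j'| - 1\bigr).
\]
Since $a(v) = \sum_i [v \in V_i] \le k$, the first sum is at most $k|V|$. For the second sum, note that every edge $e$ with $a(e)>0$ is contained in some $b_j$ with $j \in N$, forcing $r(e) \subseteq V_j'$; together with edge-connectedness of $\supp(a)$, this makes the hypergraph on $V$ with hyperedges $\{V_j' : j \in N\}$ connected. A standard spanning-tree argument---pick a spanning tree inside each hyperedge and observe that the union is a connected spanning graph on $V$ with at most $\sum_{j \in N}(|V_j'|-1)$ edges---then gives $\sum_{j \in N}(|V_j'|-1) \ge |V|-1$. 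Substituting yields $n \le k|V|-(|V|-1) = (k-1)|V|+1$.

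For part~\ref{rho:equality}, the \emph{if} direction is the construction: given edge-disjoint spanning trees $T_1,\ldots,T_k$ of $\cG$, set $a = \sum_{i=1}^{k}\ind_{T_i}$, so $a(v)=k$ for all $v$ and $a(e) \in \{0,1\}$; then $a = \ind_{T_1 \cup \cdots \cup T_k} + (k-1)\sum_{v \in V}\ind_v$ is a factorization of length $(k-1)|V|+1$, while the original sum has length $k$. For the \emph{only if} direction, trace equality through the upper-bound proof: one obtains $a(v) = k$ for every $v \in V$ (forcing each $a_i = \ind_{G_i}$ to be a connected spanning subgraph of $\cG$) and $\supp(a)$ edge-connected across $V$. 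For any partition $V = V_1 \sqcup \cdots \sqcup V_r$, each spanning connected $G_i$ crosses it at least $r-1$ times, hence $\sum_{e \in \partial \mathcal P} a(e) \ge k(r-1)$. The hardest step will be upgrading this weighted inequality to the Nash--Williams criterion $|\partial \mathcal P| \ge k(r-1)$---that is, showing that $a$ may be chosen so that $a(e) \le 1$ for every edge---after which Nash--Williams furnishes $k$ edge-disjoint spanning trees. I expect this to follow from an exchange argument that reduces multiplicities $a(e) \ge 2$ to $a(e) = 1$ while preserving both lengths $k$ and $(k-1)|V|+1$. The ``in particular'' claim on edge-connectivity is then immediate upon taking a bipartition $\{v\}, V \setminus \{v\}$.
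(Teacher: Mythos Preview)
Your argument for part~\ref{rho:refined-bound} is correct and essentially coincides with the paper's. The paper fixes a spanning tree $\cT$ of $\supp(a)$ and bounds $\sum_j(|V_j'|-1) \ge \sum_j n_j(\cT) \ge |V|-1$ by counting how often the edges of $\cT$ are covered; your hypergraph-connectivity lemma gives the same inequality via a slightly different route. (One quibble: the atoms $\ind_v$ you strip off are not literally prime when $\deg_\cG(v)\ge 2$, but they are prime in the divisor-closed submonoid $\agg(\supp(a))$, which is all you need.) The ``if'' direction of part~\ref{rho:equality} matches the paper's construction.

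The gap is in the ``only if'' direction of part~\ref{rho:equality}. Your plan is to reduce to $a(e)\le 1$ by an exchange argument and then invoke Nash--Williams, but the exchange step is where the difficulty lies and you have not supplied it. Reducing each $a_i$ to a spanning tree of its support preserves the length-$k$ factorization and does not decrease $l$, but after this the $a_i$ are trees and removing any further shared edge disconnects them, breaking the length-$k$ side. The weighted inequality $\sum_{e\in\partial\mathcal P} a(e)\ge k(r-1)$ by itself does not imply $|\partial\mathcal P|\ge k(r-1)$: already the graph on two vertices with a single edge $e$ and $a(e)=2$, $k=2$, satisfies the weighted inequality but has no two edge-disjoint spanning trees. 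So some use of the long factorization beyond Step~3 is essential, and your proposal does not indicate how.

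The paper avoids Nash--Williams entirely. After replacing each $c_i$ by a spanning tree of $\supp(c_i)$ (which, as above, preserves equality), it re-runs the inequality from part~\ref{rho:refined-bound} with the specific choice $\cT=\supp(c_i)$. If some $c_j$ with $j\ne i$ shares an edge with $c_i$, then $\sum_{f\in\cT} a(f)\ge |V|$ rather than $|V|-1$, which forces $l\le (k-1)|V|$, a contradiction. Thus the $k$ spanning trees are already edge-disjoint. This two-line argument replaces both your exchange step and the appeal to Nash--Williams; I recommend substituting it for your Step~4--5.
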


\begin{proof}
\ref{rho:refined-bound}
Let $a \in \agg(\cG)$ with $k \in \sL(a)$.
Suppose first that $\supp(a)$ is connected.
By \cref{l:substructure} we may assume $\supp(a)=\cG$.
Let $\cT$ be a spanning tree of $\cG$.
Since $k \in \sL(a)$, we have $\sum_{v \in V} a(v) \le k \card{V}$.

Let now $a = b_1 + \cdots + b_l$ with atoms $b_1$, $\ldots\,$,~$b_l$.
For each $i \in [1,l]$ let $n_i(\cT) \in \bN_0$ be the number of edges of the spanning tree $\cT$ that are contained in $\supp(b_i)$.
Observe that $\supp(b_i)$ must contain at least $n_i(\cT)+1$ vertices.
Moreover $\sum_{i=1}^l n_i(\cT) \ge \card{V} - 1$.
Thus
\begin{equation}\label{eq:ineq-rhok}
l + \card{V} - 1 \le \sum_{i=1}^{l} (n_i(\cT) + 1) \le \sum_{i=1}^l \sum_{v \in V} b_i(v) = \sum_{v \in V} a(v) \le k \card{V},
\end{equation}
and therefore $l \le (k-1)\card{V} + 1$.

Now suppose that $\supp(a)$ is arbitrary, and that $a=c_1+ \cdots + c_k = b_1+\cdots + b_l$ for atoms $c_i$,~$b_i$ with $l \ge k$.
Let $\cG_1=(V_1,E_1,r_1)$, $\ldots\,$,~$\cG_s=(V_s,E_s,r_s)$ be the connected components of $\supp(a)$.
Again reducing to $\supp(a)=\cG$, we have $\agg(\cG) \cong \agg(\cG_1) \times \cdots \times \agg(\cG_s)$.
If $\card{V_i}=1$ for some $i$, the atom $\ind_{\cG_i}$ must appear with the same multiplicity in every factorization of $a$.
Thus, for bounding $l/k$, we may assume without restriction $\card{V_i} \ge 2$ for every $i \in [1,s]$.
Moreover $2 \le s \le k$.
Then $\card{V_i} \le \card{V} - 2$ for every $i \in [1,s]$.

For every $i \in [1,s]$, let $k_i = \card{\{\, j \in [1,k] : \supp(c_j) \subseteq \cG_i \,\}}$ and $l_i = \card{\{\, j \in [1,l] : \supp(b_j) \subseteq \cG_i \,\}}$.
Then we know $l_i \le (k_i - 1)\card{V_i} + 1$.
Thus
\[
\begin{split}
l &= \sum_{i=1}^s l_i \le s + \sum_{i=1}^s (k_i -1) \card{V_i} \le s + \sum_{i=1}^s (k_i-1) (\card{V}-2)  
  \le s + (k-s) (\card{V}-2) \\
  &\le s + (k-1) (\card{V} - 2) =  s + (k-1)\card{V} - 2(k-1) \le (k-1)\card{V},
\end{split}
\]
where the last inequality follows from $2 \le s \le k$.
Note that this bound is strictly smaller than the one we claim for $\rho_k(\agg(\cG))$.
This implies that any $a$ achieving the upper bound $\rho_k(a) = (k-1)\card{V}+1$ must have connected support with $\supp(a)=\cG$.

\ref{rho:equality}
Let $k\ge 2$.
Suppose there exists $a \in \agg(\cG)$ and atoms $c_1$, $\ldots\,$,~$c_k$, $b_1$, $\ldots\,$,~$b_l$ with $l =(k-1)\card{V} + 1$ such that
\[
a=c_1 + \cdots + c_k = b_1 + \cdots + b_l.
\]
By what we just showed $\supp(a)=\cG$ and $\cG$ is connected.
Without restriction we may replace each $c_i$ with the indicator of a spanning tree of $\supp(c_i)$.
This may change $a$ and the $b_i$, but cannot make $l$ smaller.

Let $\cT$ be a spanning tree of $\cG$, and as before, denote by $n_i(\cT)$ the number of edges of $\cT$ that appear in $\supp(b_i)$.
Then in \cref{eq:ineq-rhok} we must have equality throughout, so that 
\[
\sum_{i=1}^{l} (n_i(\cT) + 1) = k \card{V} = \sum_{i=1}^k \sum_{v \in V} c_i(v).
\]
The first thing we observe from this is that necessarily each $c_i$ must have $\card{V}$ vertices, so that $\supp(c_i)$ is a spanning tree.
Secondly, suppose that distinct $c_i$ and $c_j$ share an edge.
Taking $\cT = \supp(c_i)$, then $\sum_{i=1}^{l} (n_i(\cT) + 1)\ge l + \card{V}$, and thus $l \le (k-1) \card{V}$, a contradiction.
We have thus shown that $\supp(c_1)$, $\ldots\,$,~$\supp(c_k)$ are pairwise edge-disjoint spanning trees of $\cG$.
\end{proof}

\begin{remark}
    The \defit{spanning tree packing number $\tau(\cG)$} of a graph $\cG$ is the largest number $k$ such that $\cG$ contains $k$ edge-disjoint spanning trees.
    By the previous theorem, the number $\tau(\cG)$ is the precise $k$ at which the value of $\rho_k(\agg(\cG))$ starts to differ from the upper bound.
    The survey \cite{palmer01} discusses several results about the spanning tree packing number.
    We also mention the more recent papers \cite{liu-hong-gu-lai14,gao-perezgimenez-sato18,liu-lai-tian19}.
\end{remark}

\begin{example} \label{exm:bk}
    Let $\cB_k$ be the graph consisting of two vertices and $k \ge 2$ edges between them.
    Since $\cB_k$ has $k$ edge-disjoint spanning trees we have $\rho_{k'}(\agg(\cB_k)) = (k'-1)\card{V} +1$ for $2 \le k' \le k$.
    Thus $\rho(\agg(\cB_k)) \ge \frac{k-1}{k} \card{V} + \frac{1}{k} = 2 - \frac{1}{k}$.
    This shows that the bound from \ref{rho:generic} of \cref{thm:rho} can be attained.
\end{example}

We now concern ourselves with the question when $\agg(\cG)$ is half-factorial and when it is factorial.

\begin{lemma} \label{l:hf}
    Let $\cG=(V,E,r)$ be a graph.
    If $a \in \agg(\cG)$ and $\supp(a)$ is acyclic, then $\sL(a) = \{l\}$ with
    \[
        l = \sum_{v \in V} a(v) - \sum_{e \in E} a(e).
    \]
    In particular $\card{\sL(a)}=1$.
\end{lemma}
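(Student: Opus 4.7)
The plan is to exploit the function $\ell \colon \agg(\cG) \to \bZ$ defined by $\ell(a) = \sum_{v \in V} a(v) - \sum_{e \in E} a(e)$, which is visibly additive, and to evaluate it on atoms whose support is contained in $\supp(a)$.

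The first preliminary observation is that if $a = b + c$ in $\agg(\cG)$, then $\supp(b) \subseteq \supp(a)$ and $\supp(c) \subseteq \supp(a)$, because all three agglomerations take nonnegative integer values and hence each of $b(x)$ and $c(x)$ is bounded above by $a(x)$ for every $x \in V \cup E$. The second ingredient is the structural description of atoms from \cref{p:agg-atoms}: an atom of $\agg(\cG)$ is precisely the indicator $\ind_{\cG'}$ of a non-null connected subgraph $\cG'$ of $\cG$.

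Now I would argue as follows. Suppose $a = a_1 + \cdots + a_k$ is any factorization of $a$ into atoms; such a factorization exists because $\agg(\cG)$ is atomic (e.g., by the Splitting Lemma). By the first observation, $\supp(a_i) \subseteq \supp(a)$ for every $i \in [1,k]$, and since $\supp(a)$ is acyclic, so is each $\supp(a_i)$. By the second observation, $\supp(a_i)$ is additionally connected and nonempty, and is therefore a tree. Consequently
\[
\ell(a_i) = \card{V(\supp(a_i))} - \card{E(\supp(a_i))} = 1.
\]
Additivity of $\ell$ then yields $\ell(a) = \sum_{i=1}^{k} \ell(a_i) = k$. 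Since $k$ was arbitrary among factorization lengths of $a$, this shows $\sL(a) \subseteq \{\ell(a)\}$, and existence of at least one factorization (again from atomicity) gives equality.

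There is no genuine obstacle here: both of the required facts (additivity of $\ell$ and the tree-characterization of atoms with support in an acyclic graph) are already available, so the argument is a one-line application of additivity once the supports of the $a_i$ have been located inside the acyclic $\supp(a)$.
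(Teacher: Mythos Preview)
Your proof is correct and follows essentially the same approach as the paper: both arguments observe that every atom appearing in a factorization of $a$ has support contained in the acyclic $\supp(a)$ and is therefore a tree, so that the additive function $\ell(b)=\sum_v b(v)-\sum_e b(e)$ takes the value $1$ on each such atom and hence $\ell(a)$ equals the length of any factorization. The only cosmetic difference is that the paper first invokes \cref{l:substructure} to reduce to the case where $\cG$ itself is a tree, whereas you work directly in $\cG$; the substance is identical.
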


\begin{proof}
Using \cref{l:substructure} it suffices to consider the case where $\cG$ is a tree.
Suppose $b$ is a summand of $a$. Then $\supp(b)$ is a subgraph of $\supp(a)$ and therefore acyclic.
In particular, $\supp(b)$ is a tree for every atom $b$ dividing $a$. 
For a tree $\cT=(V',E',r)$ we have $\card{V'}-\card{E'}=1$.
Thus, if $a = b_1 + \cdots + b_l$ with $b_1$,~$\ldots\,$,~$b_l$ indicators of trees $\cT_i=(V_i,E_i,r_i)$, we must have
\[
\sum_{v \in V} a(v) - \sum_{e \in E} a(e) = \sum_{i=1}^l \big( \card{V_i} - \card{E_i} \big) = l. \qedhere
\]
\end{proof}

\begin{remark}
The converse of the previous lemma is false: there exist agglomerations $a$ with $\supp(a)$ cyclic and $\card{\sL(a)}=1$. Since, by \cref{l:nicefact}, all atoms are absolutely irreducible, this is easily seen by considering $na$ where $n$ is any positive integer and $a$ is any atom in $\agg(\cG)$ that involves a cycle. For a more interesting example, let $\cG$ be the complete graph on five vertices and let $\cG_1$ and $\cG_2$ be the full subgraphs of $\cG$ with vertex sets $\{v_1, v_2, v_3\}$ and $\{v_3, v_4, v_5\}$, respectively. Then the only other factorization of $a=\ind_{\cG_1}+\ind_{\cG_2}$ is $a=(a-v_3)+v_3$. Consequently, $\sL(a)=\{2\}$, yet $a$ contains a cycle.
\end{remark}

\begin{theorem} \label{t:factorial}
Let $\cG=(V,E,r)$ be a graph.
\begin{enumerate}
    \item\label{factorial:hf} The monoid $\agg(\cG)$ is half-factorial if and only if $\cG$ is acyclic.
    \item\label{factorial:f} The monoid $\agg(\cG)$ is factorial if and only if every connected component of $\cG$ contains at most one edge.
\end{enumerate}
\end{theorem}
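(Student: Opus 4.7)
The plan is to establish (1) first and then leverage it for (2). The easy direction of (1) is immediate: if $\cG$ is acyclic, then for every $a \in \agg(\cG)$ the support $\supp(a)$ is a subgraph of $\cG$, hence acyclic, and \cref{l:hf} yields $\card{\sL(a)} = 1$, so $\agg(\cG)$ is half-factorial. For the converse, I would argue contrapositively. If $\cG$ contains a cycle, it contains a minimal cycle subgraph $\cH$, which is either $\cB_2$ (two vertices joined by two parallel edges) or $\cC_n$ for some $n \ge 3$. By \cref{l:substructure}, $\agg(\cH)$ embeds as a divisor-closed submonoid of $\agg(\cG)$, so it suffices to establish non-half-factoriality in $\agg(\cH)$. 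Inside $\agg(\cH)$, the construction from \cref{exm:elasticity}---summing the indicator agglomerations $\cT_1$, $\ldots\,$,~$\cT_n$ of the $n$ spanning subtrees of $\cH$ obtained by removing one edge at a time---yields the identity $\cT_1 + \cdots + \cT_n = (n-1)\ind_{\cH} + \ind_{v_1} + \cdots + \ind_{v_n}$, whose left-hand side has length $n$ and right-hand side has length $2n-1$. Thus $\agg(\cH)$, and a fortiori $\agg(\cG)$, is not half-factorial.

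For (2), I would begin by noting that factorial implies half-factorial, so if $\agg(\cG)$ is factorial then $\cG$ must be acyclic by (1); in particular $\cG$ is a forest. Suppose some connected component of this forest has at least two edges; then that component contains a vertex $v$ of degree at least $2$ in $\cG$. The atom $\ind_v \in \agg(\cG)$ has support $\{v\}$, and \ref{nicefact:primes} of \cref{l:nicefact} shows that $\ind_v$ is not prime, contradicting factoriality. Conversely, suppose every connected component of $\cG$ has at most one edge. By \cref{p:agg-atoms}, every atom of $\agg(\cG)$ is the indicator of a connected non-null subgraph of $\cG$, and under our hypothesis the only such subgraphs are (a) single vertices $v$, each with $\deg_{\cG}(v) \le 1$, and (b) entire two-vertex components $\{v, w, e\}$, in which both $v$ and $w$ satisfy $\deg_{\cG}(v) = \deg_{\cG}(w) = 1$. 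In either case \ref{nicefact:primes} of \cref{l:nicefact} shows that the atom is prime. Since $\agg(\cG)$ is atomic and every atom is prime, it is factorial.

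The main obstacle is the forward direction of (1)---exhibiting non-half-factorial behavior as soon as $\cG$ contains a cycle---and it is dispatched by combining the divisor-closed embedding of \cref{l:substructure} with the explicit identity of \cref{exm:elasticity}, whose argument applies uniformly to $\cC_n$ for $n \ge 3$ and, with only cosmetic changes, to $\cB_2$. Everything else is a direct application of \cref{l:hf,l:nicefact,l:substructure} and \cref{p:agg-atoms}.
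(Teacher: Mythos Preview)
Your proof is correct and follows essentially the same route as the paper: \cref{l:hf} for the acyclic direction of \ref{factorial:hf}, the divisor-closed embedding of a minimal cycle via \cref{l:substructure} together with the spanning-tree identity of \cref{exm:elasticity} for the converse (the paper splits this into \cref{exm:elasticity} for $\cC_n$ and \cref{exm:bk} for $\cB_2$, whereas you correctly observe that the same identity handles both), and \ref{nicefact:primes} of \cref{l:nicefact} for \ref{factorial:f}. Your write-up of \ref{factorial:f} simply spells out what the paper leaves implicit.
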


\begin{proof}
\ref{factorial:hf}
    If $\cG$ is acyclic, then $\agg(\cG)$ is half-factorial by the \cref{l:hf}.
    
    Suppose now that $\cG$ is not acyclic.
    Then $\cG$ contains a simple $n$-cycle with $n \ge 3$ or a multiple edge (that is, a two-cycle).
    If $\cG$ contains a simple $n$-cycle ($n \ge 3$), then $\agg(\cC_n)$ embeds as a divisor-closed submonoid into $\cG$ and hence $\agg(\cG)$ is not half-factorial by \cref{exm:elasticity}.
    If $\cG$ contains a two-cycle, then $\agg(\cB_2)$ embeds as a divisor-closed submonoid into $\agg(\cG)$ and hence $\agg(\cG)$ is not half-factorial, again by \cref{exm:bk}.
    
\ref{factorial:f}   
    The characterization of factorial agglomeration monoids follows from \ref{nicefact:primes} of \cref{l:nicefact} or \cref{c:class-group}.
\end{proof}

We have focused on the study of the (refined) elasticities of $\agg(\cG)$, as these are the most basic arithmetical invariants.
To finish, we give one example involving other invariants, demonstrating that large distances and catenary degrees can occur.

\begin{example} \label{exm:k2n}
    The following construction is illustrated in \cref{fig:k2n}.
    Let $\cG=\cK_{2,n}$ with $n \ge 2$ be a complete bipartite graph, with vertices $v_1$,~$v_2$, and $w_1$, $\ldots\,$,~$w_n$, and an edge from $v_i$ to $w_j$ for all $i \in [1,2]$ and $j \in [1,n]$.
    Let $\cG_i$ be the full subgraph on $v_i$, $w_1$, $\ldots\,$,~$w_n$, and let $a = \ind_{\cG_1} + \ind_{\cG_2}$.
    Then $a(e)=1$ for all edges $e$.
    On the vertices $a(w_j)=2$ for $j \in [1,n]$ and $a(v_i)=1$ for $i \in [1,2]$.
    The splitting lemma gives $a = \ind_{\cG} + \sum_{j=1}^n \ind_{w_j}$.
    We claim that these are the only two factorizations of $a$.
    
    Indeed, suppose $b \in \agg(\cG)$ is an atom dividing $a$ with $v_i \in \supp(b)$ for some $i \in [1,2]$.
    Then necessarily $b(e)=1$ for every edge from $v_i$ to any $w_j$.
    Thus $\cG_i$ is a subgraph of $\supp(b)$.
    So either $b=\ind_{\cG_i}$ or $b=\ind_{\cG}$.
    The claim is an immediate consequence of this.
    
    The set of lengths of $a$ is $\sL(a) = \{2,n+1\}$. 
    Thus $n-1 \in \Delta(\agg(\cK_{2,n}))$ for the set of distances of $\agg(\cK_{2,n})$.
    As a consequence, the catenary degree of $\agg(\cK_{2,n})$ is at least $n+1$.
\end{example}

\begin{figure}
\centering

\tikzstyle{vertex}=[circle, draw, anchor=center]

\noindent \begin{tikzpicture}[transform shape, scale=0.5]
\node[vertex](l) at (0, 7) [label=135:\LARGE $v_1$] {\LARGE $1$};
\node[vertex](w1) at ($(l)+(3,3.5)$) [label={\LARGE $w_1$}]  {\LARGE $2$};
\node[vertex](w2) at ($(w1)-(0,1.5)$) [label={\LARGE $w_2$}]  {\LARGE $2$};
\node[vertex](r) at ($(l)+(6,0)$) [label={\LARGE $v_2$}] {\LARGE $1$};
\node[vertex](wn) at ($(l)+(3,-3.5)$)  [label={\LARGE $w_{n}$}] {\LARGE $2$};
\node[vertex](wn1) at ($(wn)+(0,1.5)$) [label={\LARGE $w_{n-1}$}] {\LARGE $2$};
\node at ($(l)+(3,0)$) {\Huge $\vdots$};

\begin{scope}[every path/.style={-}, every node/.style={fill=white,midway}]
  \draw (l) -- (w1) node {\LARGE $1$};
  \draw (l) -- (w2) node {\LARGE $1$};
  \draw (l) -- (wn1) node {\LARGE $1$};
  \draw (l) -- (wn) node {\LARGE $1$};

  \draw (w1) -- (r) node {\LARGE $1$};
  \draw (w2) -- (r) node {\LARGE $1$};
  \draw (wn1) -- (r) node {\LARGE $1$};
  \draw (wn) -- (r) node {\LARGE $1$};
\end{scope}

\node (eq1) [right=0.5 of r] {\LARGE $=$};

\node[vertex](l) [right=0.5 of eq1] {\LARGE $1$};
\node[vertex](w1) at ($(l)+(3,3.5)$) {\LARGE $1$};
\node[vertex](w2) at ($(w1)-(0,1.5)$) {\LARGE $1$};
\node[vertex](wn) at ($(l)+(3,-3.5)$) {\LARGE $1$};
\node[vertex](wn1) at ($(wn)+(0,1.5)$) {\LARGE $1$};
\node(dots) at ($(l)+(3,0)$) {\Huge $\vdots$};

\begin{scope}[every path/.style={-}, every node/.style={fill=white,midway}]
  \draw (l) -- (w1) node {\LARGE $1$};
  \draw (l) -- (w2) node {\LARGE $1$};
  \draw (l) -- (wn1) node {\LARGE $1$};
  \draw (l) -- (wn) node {\LARGE $1$};
\end{scope}

\node (eq2) [right=0.5 of dots] {\LARGE $+$};

\node(dots) [right=0.5 of eq2] {\Huge $\vdots$};
\node[vertex](w1) at ($(dots)+(0,3.5)$) {\LARGE $1$};
\node[vertex](w2) at ($(w1)-(0,1.5)$) {\LARGE $1$};
\node[vertex](r) at ($(dots)+(3,0)$) {\LARGE $1$};
\node[vertex](wn) at ($(dots)+(0,-3.5)$) {\LARGE $1$};
\node[vertex](wn1) at ($(wn)+(0,1.5)$) {\LARGE $1$};

\begin{scope}[every path/.style={-}, every node/.style={fill=white,midway}]
  \draw (w1) -- (r) node {\LARGE $1$};
  \draw (w2) -- (r) node {\LARGE $1$};
  \draw (wn1) -- (r) node {\LARGE $1$};
  \draw (wn) -- (r) node {\LARGE $1$};
\end{scope}

\node (eq3) [right=0.5 of r] {\LARGE $=$};

\node[vertex](l) [right=0.5 of eq3] {\LARGE $1$};
\node[vertex](w1) at ($(l)+(3,3.5)$) {\LARGE $1$};
\node[vertex](w2) at ($(w1)-(0,1.5)$) {\LARGE $1$};
\node[vertex](r) at ($(l)+(6,0)$) {\LARGE $1$};
\node[vertex](wn) at ($(l)+(3,-3.5)$) {\LARGE $1$};
\node[vertex](wn1) at ($(wn)+(0,1.5)$) {\LARGE $1$};
\node at ($(l)+(3,0)$) {\Huge $\vdots$};

\begin{scope}[every path/.style={-}, every node/.style={fill=white,midway}]
  \draw (l) -- (w1) node {\LARGE $1$};
  \draw (l) -- (w2) node {\LARGE $1$};
  \draw (l) -- (wn1) node {\LARGE $1$};
  \draw (l) -- (wn) node {\LARGE $1$};

  \draw (w1) -- (r) node {\LARGE $1$};
  \draw (w2) -- (r) node {\LARGE $1$};
  \draw (wn1) -- (r) node {\LARGE $1$};
  \draw (wn) -- (r) node {\LARGE $1$};
\end{scope}

\node (eq4) [right=0.5 of r] {\LARGE $=$};

\node(dots) [right=0.5 of eq4] {\Huge $\vdots$};
\node[vertex](w1) at ($(dots)+(0,3.5)$) {\LARGE $1$};
\node[vertex](w2) at ($(w1)-(0,1.5)$) {\LARGE $1$};
\node[vertex](wn) at ($(dots)+(0,-3.5)$) {\LARGE $1$};
\node[vertex](wn1) at ($(wn)+(0,1.5)$) {\LARGE $1$};

\end{tikzpicture}

\caption{In $\agg(\mathcal K_{2,n})$ there exists an element $a$ with set of lengths $\sL(a)=\{2,n+1\}$.
See \cref{exm:k2n}.}   \label{fig:k2n}
\end{figure}

\section{From lattices to graph agglomerations} \label{s:tie-in}

In this final section we bridge the studies of lattices of modules over Bass rings and monoids of graph agglomerations.
By establishing a transfer homomorphism, we are able to prove \cref{t:main,t:main-finiteness}.

\begin{definition} \label{d:prime-ideal-intersection-graph}
    Let $R$ be a Bass ring.
    The \defit{graph of prime ideal intersections} of $R$, denoted by $\cG_R$, is the graph with
    \begin{itemize}
        \item set of vertices equal to $\minspec(R)$, 
        \item set of edges consisting of all maximal ideals of $R$ that contain more than one minimal prime ideal, and
        \item each edge being incident with the two minimal prime ideals contained in it.
    \end{itemize}
\end{definition}

Since every maximal ideal in a Bass ring $R$ contains at most two minimal prime ideals, this is a well defined graph (possibly with multiple edges, but with no loops).
Because $R$ has finitely many minimal prime ideals and the set of singular maximal ideals is finite, the graph is finite.
We discuss two examples below (\cref{exm:bass-agg}). 

\begin{remark}
    Let $R$ be a Bass ring with minimal prime ideals $\fp_1$, $\ldots\,$,~$\fp_k$.
    The Zariski closed set $V_i\coloneqq \overline{ \{\fp_i\} } = \{\, \fq \in \Spec(R) : \fp_i \subseteq \fq \,\}$ is open, as it is the complement of the closed set $\bigcup_{j \ne i} V_j$.
    Two distinct such sets $V_i$ and $V_j$ have non-trivial intersection if and only if there exists a maximal ideal $\fm$ containing both $\fp_i$ and $\fp_j$.
    Therefore $\cG_R$ is the intersection (multi)graph of the sets $V_1$, $\ldots\,$,~$V_k$ in the graph-theoretical sense.
        
    We also see that $R$ is indecomposable as a ring if and only if $\cG_R$ is connected.
    If $R= R_1 \times \cdots \times R_l$ with indecomposable rings $R_1$, $\ldots\,$,~$R_l$, then $\cG_R  \cong \cG_{R_1} \oplus \cdots \oplus \cG_{R_l}$, with $\cG_{R_i}$ the connected components of the graph $\cG_R$.
\end{remark}

We now recognize the codomain of the transfer homomorphism of \cref{p:transfer-deduplicated} as a monoid of graph agglomerations.

\begin{proposition} \label{p:transfer-bass-agg}
    Let $R$ be a Bass ring and let $\cG_R$ be its prime ideal intersection graph.
    Then there exists a transfer homomorphism $\theta\colon T(R) \to \agg(\cG_R)$ from the monoid of isomorphism classes of $R$-lattices to the monoid of agglomerations on $\cG_R$.
\end{proposition}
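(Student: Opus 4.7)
The plan is to compose the transfer homomorphism $\varphi\colon T(R) \to H$ already constructed in \cref{p:transfer-deduplicated} with a natural monoid isomorphism $H \xrightarrow{\sim} \agg(\cG_R)$, and then invoke the elementary fact that composing a transfer homomorphism with an isomorphism yields a transfer homomorphism. So everything reduces to identifying the Diophantine monoid $H$ of \cref{p:transfer-deduplicated} with the agglomeration monoid $\agg(\cG_R)$.

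To set up the identification, recall that $H$ consists of tuples $\vec x \in \bN_0^c$ whose coordinates are indexed by $V = \minspec(R)$ together with three additional indices $\fm_\fp$, $\fm_\fq$, $\fm$ for each $\fm \in E$ (where $\fp,\fq$ are the two minimal primes contained in $\fm$), subject to the relations $x_\fp = x_\fm + x_{\fm_\fp}$ for every incidence $\fp \subseteq \fm$. By \cref{d:prime-ideal-intersection-graph}, the graph $\cG_R$ has vertex set $V$ and edge set $E$, with an edge $\fm \in E$ incident to a vertex $\fp \in V$ exactly when $\fp \subseteq \fm$. Consequently, $\agg(\cG_R)$ consists of functions $a\colon V \cup E \to \bN_0$ satisfying $a(\fp) \ge a(\fm)$ for each incidence $\fp \subseteq \fm$.

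The main step is to write down the isomorphism. Define $\pi\colon H \to \bN_0^{V \cup E}$ by forgetting the $\fm_\fp$- and $\fm_\fq$-coordinates; explicitly, $\pi(\vec x)$ is the function $\fp \mapsto x_\fp$, $\fm \mapsto x_\fm$. Since the defining relations of $H$ force $x_{\fm_\fp} = x_\fp - x_\fm \ge 0$, the map $\pi$ is injective and its image lies inside $\agg(\cG_R)$; conversely, every agglomeration $a$ lifts uniquely to a tuple in $H$ by declaring $x_{\fm_\fp} \coloneqq a(\fp) - a(\fm)$. As both $H$ and $\agg(\cG_R)$ carry pointwise addition, $\pi$ is a monoid isomorphism $H \xrightarrow{\sim} \agg(\cG_R)$. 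Setting $\theta \coloneqq \pi \circ \varphi$ yields the desired transfer homomorphism.

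There is no substantive obstacle in the present proposition: the heavy lifting, namely the construction of a transfer homomorphism from $T(R)$ to a Diophantine monoid governed by the local-global data of $R$, has already been done in \cref{t:transhom,p:transfer-diophantine,p:transfer-deduplicated}. The only remaining content is the bookkeeping necessary to recognize the particular Diophantine monoid obtained there as an agglomeration monoid on the graph of prime ideal intersections.
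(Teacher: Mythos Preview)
Your proposal is correct and follows essentially the same route as the paper: both compose the transfer homomorphism of \cref{p:transfer-deduplicated} with the evident isomorphism $H \xrightarrow{\sim} \agg(\cG_R)$ obtained by dropping the slack coordinates $x_{\fm_\fp}=x_\fp-x_\fm$. The only minor omission is that the paper separately dispatches the degenerate cases $R=0$ and $\dim(R)=0$, which lie outside the standing hypothesis $R\ne K$ of \cref{s:modules} under which \cref{p:transfer-deduplicated} was proved; these cases are trivial.
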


\begin{proof}
    If $R=0$, then $\cG_R$ is the null graph, and $T(R)=\agg(\cG_R)=0$.
    If $\dim(R)=0$, then $R=K_1 \times \cdots \times K_k$ is a product of fields and $\cG_R$ is a disjoint union of $k$ trivial graphs.
    Hence $T(R) \cong \bN_0^k \cong \agg(\cG_R)$ and there is nothing to show.
    From now on $\dim(R)=1$.

    Let $H \subseteq \bN_0^c$ be the Diophantine monoid from \cref{p:transfer-deduplicated}.
    With $E \subseteq \maxspec(R)$ the set of maximal ideals of $R$ containing two minimal prime ideals, with $V \coloneqq \minspec(R)$, and with indexing as in \cref{p:transfer-deduplicated}, we have
    \[
    H = \{\, \vec x \in \bN_0^c : x_\fp = x_\fm + x_{\fm_{\fp}} \text{ for all $(\fm,\fp) \in E \times V$ with $\fp \subseteq \fm$ } \,\}.
    \]    

    Since there exists a transfer homomorphism $\varphi\colon T(R) \to H$, it is sufficient to show $H \cong \agg(\cG_R)$.
    Note that $x_\fp \ge x_\fm$ for every $(\fm,\fp) \in E$ with $\fp \subseteq \fm$, and that $x_{\fm_{\fp}}$ can then be computed as $x_{\fm_\fp} = x_\fp - x_{\fm}$. 
    Dropping the coordinates corresponding to columns of the type $\fm_\fp$, we have that
    \[
    H \cong \{\, \vec x \in \bN_0^{V \cup E} : x_{\fp} \ge x_{\fm} \text{ for all $(\fp,\fm) \in E$ with $\fp \subseteq \fm $} \,\}.
    \]
    Then $H \cong \agg(\cG_R)$ by definition of the monoid of graph agglomerations, \cref{def:agglomeration}.
\end{proof}

Not only can the arithmetic of $T(R)$ be studied in terms of a monoid of graph agglomerations whenever $R$ is a Bass ring, but every graph can be realized as the graph of prime ideal intersections of some Bass ring.  In the following realization result, keep in mind that all our graphs are finite and that we permit multiple edges but no loops.

\begin{proposition} \label{p:realization}
\begin{enumerate}
    \item \label{realization:subgraph} Let $R$ be a Bass ring and let $\cG'$ be a subgraph of $\cG_R$.
    Then there exists a multiplicative set $S \subseteq R$ such that the localization $R' = (S^{-1})R$ satisfies $\cG_{R'}\cong \cG'$.
    \item \label{realization:all} For every finite graph $\cG$, there exists a semilocal Bass ring $R$ with $\cG_R\cong \cG$.
\end{enumerate}
\end{proposition}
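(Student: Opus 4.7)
My plan is to handle \ref{realization:subgraph} directly via prime avoidance, and to construct the ring in \ref{realization:all} as a fiber product of semilocal principal ideal domains glued along prescribed closed points.

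For \ref{realization:subgraph}, I would set
\[
S \coloneqq R \setminus \Bigl( \bigcup_{\fp \in V(\cG')} \fp \cup \bigcup_{\fm \in E(\cG')} \fm \Bigr).
\]
This is multiplicative, as the complement of a finite union of prime ideals (finite because $\minspec(R)$ is finite and the singular locus of $R$ is finite by \cref{l:finite-non-dvr}). The localization $R' \coloneqq S^{-1}R$ remains Bass, since each of the four defining conditions from \cref{tlw:bass} localizes. To identify $\cG_{R'}$ with $\cG'$, the key input is prime avoidance: a prime $\fq$ of $R$ extends to a prime of $R'$ iff $\fq$ is contained in one of the primes defining $S$. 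For minimal $\fq$, this forces $\fq \in V(\cG')$ (either directly, or because $\fq$ sits below some $\fm \in E(\cG')$, whose two endpoints lie in $V(\cG')$ by the definition of a subgraph). For a maximal ideal $\fm$ containing two distinct minimal primes, the same argument forces $\fm \in E(\cG')$, and the two endpoints of $\fm$ in $\cG_{R'}$ coincide with its endpoints in $\cG'$.

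For \ref{realization:all}, fix an infinite field $k$. For each $v \in V$ set $d_v \coloneqq \max(\deg_{\cG}(v), 1)$ and let $D_v$ be a semilocal principal ideal domain with exactly $d_v$ maximal ideals, each with residue field $k$ (for instance, a suitable localization of $k[x]$ at distinct linear maximal ideals). For each incidence between a vertex $v$ and an edge $e$, pick a maximal ideal $\fm_{v,e}$ of $D_v$, with distinct incidences at $v$ assigned to distinct maximal ideals. Setting $\overline R \coloneqq \prod_{v \in V} D_v$, I define
\[
R \coloneqq \Bigl\{\, (f_v)_v \in \overline R \,:\, f_v \bmod \fm_{v,e} = f_w \bmod \fm_{w,e} \text{ for every edge } e = \{v,w\} \in E \,\Bigr\},
\]
identifying all residue fields with $k$. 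The verification consists of four steps. (i) The evaluation map $(f_v) \mapsto (f_v(\fm_{v,e}) - f_w(\fm_{w,e}))_{e \in E}$ fits into a short exact sequence $0 \to R \to \overline R \to k^{\card{E}} \to 0$ of $R$-modules, with surjectivity from CRT inside each $D_v$; hence $\overline R$ is module-finite over $R$, $R$ is noetherian by Eakin--Nagata, reduced as a subring of $\overline R$, and $\overline R$ is its integral closure. (ii) The $R$-module $\overline R/R \cong k^{\card{E}}$ is cyclic, generated by the image of $(1, \ldots, 1)$, since $R$ acts on the $e$-th coordinate by $r \mapsto r(\fM_e)$ (with $\fM_e$ the maximal ideal of $R$ attached to $e$) and $R \to \prod_{e \in E} R/\fM_e$ is surjective by CRT on pairwise comaximal maximal ideals; by \cref{tlw:bass}, $R$ is therefore Bass. (iii) A direct inspection of the defining constraints shows that the minimal primes of $R$ are in bijection with $V$ (contractions from $\overline R$), that $\fM_e$ contains exactly the two minimal primes $\fp_v$ and $\fp_w$ for $e = \{v,w\}$, and that an isolated vertex contributes a single non-singular DVR-type maximal ideal. (iv) Combining these, $\cG_R \cong \cG$ and $R$ is semilocal.

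The main obstacle will be step (ii), the cyclicity of $\overline R/R$: the other steps are routine bookkeeping with contractions of primes, but cyclicity requires enough ``functions'' in $R$ hitting prescribed residues at all singular maximal ideals simultaneously. This is a Chinese Remainder statement for $R$ itself, which only becomes transparent once $\overline R/R$ has been identified with $k^{\card{E}}$ as an $R$-module through the evaluation map, with the $R$-action factoring through the residues at the $\fM_e$.
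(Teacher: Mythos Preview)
Your argument for \ref{realization:subgraph} is essentially the paper's: the same multiplicative set $S$, with prime avoidance controlling which primes survive.

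For \ref{realization:all} you take a genuinely different route. The paper works geometrically: over an algebraically closed field it takes a union $C = C_1 \cup \cdots \cup C_n$ of smooth plane curves in general position, observes that the coordinate ring has multiplicity at most $2$ at every point (so is Bass by \cref{tlw:bass}\ref{tlw:bass:multiplicity}), and then invokes part \ref{realization:subgraph} to localize down to the exact graph. Your construction is purely ring-theoretic, building $R$ as a fiber product of semilocal PIDs glued along residue fields and verifying Bass via \cref{tlw:bass}\ref{tlw:bass:cyclic}. Your approach is more self-contained (no algebraic geometry, and any infinite field suffices) and hits the target graph on the nose without a second localization; the paper's approach is shorter because the multiplicity criterion makes the Bass verification immediate.

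Two small points to tighten in your write-up. First, the phrase ``generated by the image of $(1,\ldots,1)$'' is ambiguous: read as an element of $\overline R = \prod_v D_v$, the tuple $(1,\ldots,1)$ already lies in $R$, so its image in $\overline R/R$ is zero. You mean the element $(1,\ldots,1)$ in $k^{\card{E}}$ under your identification $\overline R/R \cong k^{\card{E}}$; say so explicitly. Second, the CRT step requires the $\fM_e$ to be pairwise \emph{distinct} maximal ideals, which you assert but do not argue. This follows cleanly once you note that the conductor $\mathfrak c = \prod_v \bigcap_{e \sim v} \fm_{v,e}$ is a common ideal of $R$ and $\overline R$ with $R/\mathfrak c \cong k^{\card{E}}$ as a ring (the diagonal inside $\overline R/\mathfrak c \cong k^{2\card{E}}$), whose $\card{E}$ maximal ideals are exactly the $\fM_e$. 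The same computation makes the identification of the $R$-action on $\overline R/R$ transparent and streamlines step~(iii).
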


\begin{proof}
    \ref{realization:subgraph}
    Let $\fm_1$, $\ldots\,$,~$\fm_k$ be the maximal ideals of $R$ that are the edges of $\cG'$, and let $\fp_1$, $\ldots\,$,~$\fp_l$ denote the minimal prime ideals of $R$ that are the vertices of $\cG'$.
    Then $S \coloneqq R \setminus (\fp_1 \cup \cdots \cup \fp_l \cup \fm_1\cup\cdots\cup \fm_k)$ is a multiplicative set. 
    The localization $R'=S^{-1}R$ is a Bass ring with minimal prime ideals $R'\fp_1,\ldots,R'\fp_l$ and maximal ideals $R'\fm_1,\ldots,R'\fm_k$.
    (In the degenerate case $l=0$, the ring $R'$ is the zero ring.)
    Since the inclusions between these ideals are preserved, we have $\cG_R' \cong \cG'$.
    
    \ref{realization:all} 
    The null graph is realized by the zero ring. 
    Assume $\cG$ is non-null.
    By \ref{realization:subgraph} it suffices to prove: for all $n \ge 1$ and $d \ge 1$, there exists a semilocal Bass ring $R$ with $\cG_R$ having $n$ vertices, and at least $d$ edges between any two vertices.
    Let $K$ be an algebraically closed field and let $C_1$, $\ldots\,$,~$C_n$ be pairwise distinct, smooth affine plane curves over $K$ such that any two of them have at least $d$ intersection points.
    We further assume that all these intersection points are pairwise distinct.
    Smoothness guarantees that every $C_i$ has multiplicity $1$ at each of its points.
    Thus $C = C_1 \cup \cdots \cup C_n$ has multiplicity $2$ at each point where two curves intersect, and multiplicity $1$ at every other point.
    It follows that the affine coordinate ring $R=K[C]$ is a Bass ring, and that $\cG_R$ has $n$ vertices and at least $d$ edges between any two vertices.
    
    Let $\fm_1$, $\ldots\,$,~$\fm_k$ denote the maximal ideals of $R$ corresponding to the intersection points of the curves $C_1$, $\ldots\,$,~$C_n$.
    Let $S \coloneqq R \setminus (\fm_1 \cup \cdots \cup \fm_k)$.
    Then $S^{-1}R$ is a semilocal Bass ring with $\cG_{S^{-1}R} \cong \cG_R$.
\end{proof}

\begin{remark}
    If $\cG$ is a simple graph, then we can take the curves $C_1$, $\ldots\,$,~$C_n$ in the proof of \ref{realization:subgraph} to be $n$ lines in general position in the affine plane (that is, no two lines are parallel, and no three lines intersect in a point). 
    
    For higher multiplicity of the edges, it is similarly easy to construct higher degree curves with enough intersection points. 
    For instance, one could take a polynomial $f(x) \in K[x]$ of degree $d+1$.
    For any $\lambda\in K$, the graph $C_\lambda$ defined by  $y=f(x - \lambda)$ is a smooth plane curve.
    If $\lambda_1 \ne \lambda_2$, then $C_{\lambda_1}$ intersects $C_{\lambda_2}$ in $d$ points (counting multiplicity).
    Choosing among the infinitely many curves of this family $n$ of them having no multiple intersection points yields the desired family.
 \end{remark}

\Cref{t:main,t:main-finiteness} are now an easy consequence of the existence of a transfer homomorphism together with the corresponding results for monoids of graph agglomerations.

\begin{proof}[Proof of \cref{t:main}]
   The transfer homomorphism exists by \cref{p:transfer-bass-agg}.
   Since $\agg(\cG_R)$ is a finitely generated Krull monoid, therefore $T(R)$ is transfer Krull of finite type.
   The remaining claims follow from \cref{p:transfer-implication}.
\end{proof}

\begin{proof}[Proof of \cref{t:main-finiteness}]
    By \cref{t:main}, the monoid $T(R)$ is transfer Krull of finite type.
    More specifically, there is a transfer homomorphism $\theta\colon T(R) \to \agg(\cG_R)$.
    Claims \ref{mf:elasticities-general}, \ref{mf:distances}, \ref{mf:uk}, and \ref{mf:lengths} thus follow from \cref{t:krull-finiteness}; and   \ref{mf:elasticities} and \ref{mf:refined-elasticities} follow from \cref{thm:rho} and \cref{thm:rhok}; claim \ref{mf:half-factorial} holds by \cref{t:factorial}.
    
    If $\Pic(R)$ is trivial, the map $\psi$ from \cref{t:transhom} is injective.
    Then $T(R)\cong \Psi(T(R))$ is a finitely generated Krull monoid and \ref{mf:cat-omega} follows from \cref{t:krull-finiteness}.

   For the complete graph $\cK_n$, recall $\rho(\agg(\cK_n)) = n + \frac{2}{n} - 2 \ge n-2$ by \cref{exm:elasticity}.
    By \cref{p:realization} there is a Bass Ring $R$ with $\rho(T(R)) = \rho(\agg(\cK_n)) \ge n-2$; this shows \ref{mf:existence}.
\end{proof}

We now revisit \cref{exm:bass-domain,exm:bass-ngon,exm:k2n} in light of \cref{p:realization}.
\begin{example} \label{exm:bass-agg}
\begin{enumerate}
    \item As in \cref{exm:bass-domain}, let $R$ be a domain.
    Then $R$ has the unique minimal prime ideal $0$.
    Thus $\cG_R$ is the trivial graph consisting of a single vertex and $\agg(\cG_R) \cong \bN_0$.
    Consequently, the monoid $\agg(\cG_R)$ is factorial, and therefore $T(R)$ is half-factorial.
    
    \item Let $R$ be the localization of the coordinate ring of a regular $m$-gon, as in \cref{exm:bass-ngon}.
    Then there are $m$ minimal prime ideals, corresponding to the lines containing the edges of the $m$-gon, and $m$ maximal ideals corresponding to the corners of the $m$-gon. 
    The graph $\cG_R$ is therefore an $m$-cycle (but keep in mind that the edges correspond to the maximal ideals, and the vertices correspond to the maximal ideals here).
    Thus, for instance, we have $\rho(T(R)) = 2 - \frac{1}{m}$ by \cref{exm:elasticity}.
    
    \item For each $n\geq 2$, \cref{p:realization} and \cref{exm:k2n} give the existence of a semilocal Bass ring $R$ and an $R$-lattice $M$ such that $M$ decomposes only as the direct sum of $2$ indecomposable lattices and as the direct sum of $n+1$ indecomposable lattices. Consequently $\Delta([M])=\{n-1\}$ in $T(R)$. We contrast this with the case where $R$ is a local ring-order with finite representation type where  $\Delta([M])=\{0\}$ or $\Delta([M])=\{1\}$ for all $M$ (see \cite[Theorems 4.12 and 6.4(a)]{baeth-geroldinger14}). 
\end{enumerate}
\end{example}

\begin{remark}
    \begin{enumerate}
        \item The transfer homomorphism $\psi$ in \cref{subsec:diophantine} maps the isomorphism class of a module $M$ to its genus; for each singular maximal ideal we record the multiplicity of the indecomposable modules in the direct-sum decomposition $M_\fm$.
        If $\fm$ contains two minimal prime ideals, each of these modules $M_\fm$ has rank $(1,0)$, $(0,1)$, or $(1,1)$.
        In passing to the monoid of graph agglomerations we lose some additional information: only the ranks of the indecomposable summands of $M_\fm$ are recorded (this corresponds to the elimination of duplicate columns in the defining matrix of the Diophantine monoid, as in \cref{l:duplicate columns}).
        Put another way, if there are multiple modules of rank $(1,1)$, we do not distinguish between them.
        
        To correct for this loss of information, it would be possible to refine the monoid of graph agglomerations as follows: 
        put an edge for each $\fm$ containing two minimal prime ideals and each indecomposable $N_\fm$ and group these edges by $\fm$.
        Then the function associating values to the edges and vertices of the graphs must be defined so that the value of each vertex is larger than or equal to the sum of weights of all the edges corresponding to $\fm$.
        Although this definition would provide a more complete translation of the genus (it still does not account for the different indecomposables at a singular maximal ideal containing a unique minimal prime), such a monoid would be more complicated to study.

        Since there is already a transfer homomorphism from $T(R)$ to a monoid of graph agglomerations, this additional complexity does not provide more information as far as sets of lengths are concerned.
     
        \item 
        More refined arithmetical invariants, such as the tame degree, catenary degree, or $\omega$-invariant, are in general not preserved by transfer homomorphisms. 
        To pull back information about these invariants along a transfer homomorphism, one needs to study the fibers of the transfer homomorphism.
        In the present paper, we have chosen to forego the additional technical complexity this entails.
        For the very special case in which $R$ is a Dedekind domain, the results from \cite[Sections 5 and 6]{baeth-geroldinger-grynkiewicz-smertnig15} are applicable.
    \end{enumerate}
\end{remark}

\bibliographystyle{hyperalphaabbr}
\bibliography{bass_rings}







\end{document}